\theoremstyle{plain}
\newtheorem{theo}{Theorem}[section]
\newtheorem{lemma}[theo]{Lemma}
\newtheorem{prop}[theo]{Proposition}
\newtheorem{cor}[theo]{Corollary}
\theoremstyle{definition}
\newtheorem{defi}[theo]{Definition}
\theoremstyle{remark}
\newtheorem{rem}[theo]{Remark}
\newtheorem{exa}[theo]{Example}
\numberwithin{equation}{section}
\numberwithin{figure}{section}
\newcommand\Graph{\mathcal{G}}
\newcommand{\ud}{\,\mathrm{d}}
\newcommand{\R}{\mathbb{R}}
\newcommand{\N}{\mathbb{N}}
\newcommand{\Dir}{\mathfrak{V}}
\def\dashint{\,\ThisStyle{\ensurestackMath{%
            \stackinset{c}{.2\LMpt}{c}{.5\LMpt}{\SavedStyle-}{\SavedStyle\phantom{\int}}}%
        \setbox0=\hbox{$\SavedStyle\int\,$}\kern-\wd0}\int}
 \def\mGraph{\mathcal{G}}
 \def\mV{\mathsf{V}}
 \def\mE{\mathsf{E}}
 \def\mv{\mathsf{v}}
 \def\me{\mathsf{e}}
 \def\mw{\mathsf{w}}
 \def\mf{\mathsf{f}}
\def\centervertex{\mathsf{v}_{\mathsf{c}}} 
\def\boundarymap{\beta_{\Graph}}
\DeclareMathOperator{\diam}{Diam}
\DeclareMathOperator{\dist}{dist}
\DeclareMathOperator{\dista}{dist}
\DeclareMathOperator{\essinf}{ess \, inf}
\DeclareMathOperator{\dom}{dom}
\DeclareMathOperator{\form}{\mathfrak{t}}
\DeclareMathOperator{\Inr}{\operatorname{Inr}}
\DeclareMathOperator{\Girth}{\operatorname{Girth}}
\title{
On the spectrum of infinite quantum graphs
} 
\author[M.~Düfel]{Marco Düfel}
\author[J.B.~Kennedy]{James B.~Kennedy\orcidlink{0000-0001-5634-0301}}
\author[D.~Mugnolo]{Delio Mugnolo\orcidlink{0000-0001-9405-0874}}
\author[M.~Plümer]{Marvin Plümer\orcidlink{0000-0002-7161-4697}}
\author[M.~Täufer]{Matthias Täufer\orcidlink{0000-0001-8473-2310}}
\address{Marco Düfel, Lehrgebiet Analysis, Fakultät Mathematik und Informatik, Fern\-Universität in Hagen, D-58084 Hagen, Germany}
\email{marcoduefel@gmail.com}
\address{James B.~Kennedy, Departamento de Matem\'atica, Universidade de Aveiro, P-3810-193 Aveiro, Portugal}
\email{jbkennedy@ua.pt}
\address{Delio Mugnolo, Lehrgebiet Analysis, Fakultät Mathematik und Informatik, Fern\-Universität in Hagen, D-58084 Hagen, Germany}
\email{delio.mugnolo@fernuni-hagen.de}
\address{Marvin Plümer, Lehrgebiet Analysis, Fakultät Mathematik und Informatik, Fern\-Universität in Hagen, D-58084 Hagen, Germany}
\email{marvin.pluemer@online.de}
\address{Matthias Täufer, Universit\'e Polytechnique Hauts-de-France, C\'ERAMATHS/DMATHS,
F-59313 -- Valenciennes Cedex 9 France}
\email{matthias.taufer@uphf.fr}
\subjclass[2010]{34B45, 35P15, 81Q35}
\keywords{Spectral geometry of quantum graphs; Shape optimisation}
\thanks{The work of J.B.K. was supported by the Funda\c{c}\~ao para a Ci\^encia e a Tecnologia, Portugal, via projects 2023.13921.PEX, \url{https://doi.org/10.54499/2023.13921.PEX}, UIDB/00208/2020, \url{https://doi.org/10.54499/UIDB/00208/2020} and UIDB/04106/2020, \url{https://doi.org/10.54499/UIDB/04106/2020}. The work of D.M. and M.P.\ was supported by the Deutsche Forschungsgemeinschaft (Grant 397230547). This article is based upon work from COST Action 18232 MAT-DYN-NET, supported by COST (European Cooperation in Science and Technology), \url{www.cost.eu}.}
\begin{document}

\begin{abstract}
We study the interplay between spectrum, geometry and boundary conditions for two distinguished self-adjoint realisations of the Laplacian on infinite metric graphs, the so-called \emph{Friedrichs} and \emph{Neumann extensions}.
We introduce a new criterion for compactness of the resolvent and apply this to identify a transition from purely discrete to non-empty essential spectrum among a class of infinite metric graphs, a phenomenon that seems to have no known counterpart for Laplacians on Euclidean domains of infinite volume.
In the case of discrete spectrum we then prove upper and lower bounds on eigenvalues, thus extending a number of bounds previously only known in the compact setting to infinite graphs. 
Some of our bounds, for instance in terms of the inradius, are new even on compact graphs.

\end{abstract}

\maketitle


\section{Introduction}\label{sec:intro}

This article is about spectral properties of different self-adjoint realisations of the Laplace operator on infinite metric graphs. 
Recently, it was observed in~\cite{KosMugNic22} that on infinite metric graphs there can be points in their closure which do not belong to the graph itself (so-called \emph{graph ends}),
 and that two natural realisations of the Laplacian, namely the \emph{Friedrichs} and the \emph{Neumann extension}, can be understood to impose Dirichlet and Neumann conditions, respectively, at those ends.
This raises the question of what effect the choice of the extensions will have on the spectrum.
The answer is, roughly speaking, that the choice of the extension is quite relevant and can have more dramatic consequences on infinite quantum graphs than the choice of vertex conditions has on Laplacians on compact graphs.

We develop qualitative criteria for the absence or existence of essential spectrum, encountering surprises such as a phase transition in the spectrum of the Neumann extension. In the case of purely discrete spectrum, we also prove quantitative estimates on eigenvalues, some of which extend estimates previously only known for finite quantum graphs, and some of which are entirely new.

By quantum graphs we understand Schrödinger operators acting on functions supported on metric graphs.
In the case of \emph{compact metric graphs}, that is, metric graphs consisting of a finite set of edges, each of finite length, they were introduced to the mathematical literature in the early 1980s~\cite{Lum80,Lum80b}. 
It was proved in~\cite{PavFad83} that, upon imposing appropriate transmission conditions in the vertices, they are self-adjoint and have purely discrete spectrum.
Since then, describing spectral properties of quantum graphs in terms of geometric properties of the underlying metric graph, and vice versa, has become an increasingly popular topic, especially in the last decade, see~\cite{BerKenKur19,Kur23} and references therein.

Infinite quantum graphs, and in particular their spectral properties, have been studied since the late 1990s  in \cite{Car97,Cat97,BelLub04,BruGeyPan08} and more recently among others in~\cite{LenPan16,BecGreMug21}.
Therein, the relevant operator is essentially self-adjoint. The case where different self-adjoint extensions may exist was first discussed in the pioneering papers \cite{Car00,NaiSol00}, and then more extensively in \cite{BenSalSal11,ExnKosMal18,KosNic19}. In these papers, the spectral properties of only the Friedrichs extension of the (general realisations of) Laplacians on infinite quantum graphs were discussed explicitly.
First investigations relied on a transference principle relating the spectrum of a quantum graph to the spectrum of an infinite matrix: this idea goes back to von Below's early analysis in~\cite{Bel85} and relies fundamentally upon the assumption that \emph{all edges have the same length}. 
If this assumption is dropped, infinite quantum graphs will generally \emph{not} be essentially self-adjoint: the operator domain may then have to include appropriate \emph{boundary conditions at infinity}, which in turn will influence the spectrum.
A similar phenomenon is known for discrete Laplacians on combinatorial graphs, see e.g.~\cite{HuaKelMas13,GeoHaeKel15}. Also, let us point out that it has been known since \cite{AloKelTep16}, and implicitly since \cite{AloFre12}, that certain classes of fractals, including so-called Hanoi attractors, can be \textit{approximated by metric graphs}: in other words, Laplacians thereon are infinite quantum graphs, in our language. Spectral analysis for such classes of fractals has been performed since \cite{AloFre17,AloFreKig18,AloCheGu19}. However, we do note that important classes of infinite quantum graphs, even as natural as the Neumann realisation of the Laplacians on (fractal-like, but possibly non-selfsimilar) radially symmetric trees with infinitely many ends, are not covered by the theory in the present paper; and yet, it is known that under suitable growth conditions, they may have pure point spectrum: we refer to~\cite{Sol04,NicSem18,JolKacSem19} and especially to the recent paper \cite{FraNadPan25}.

As intimated above, the notion of infinite quantum graph we are going to discuss relies on the notion of \emph{ends}, a classical concept from graph theory~(see, e.g., \cite[Chapter 8]{Die17} and references therein) which was recently adapted to quantum graphs in~\cite{KosMugNic22}. 
As shown therein, if an end is ``thin enough'' in a certain sense, then different boundary conditions can be imposed on it leading to different self-adjoint realisations of the Laplacian. We will treat what are arguably the most important two types of conditions, Dirichlet and Neumann, in this paper.

Let us also point out parallels of this paper to the spectral geometry on domains and manifolds -- a well developed field, see, e.g.~\cite{GreNgu13} for a survey. 
If $\Omega\subset \R^d$ is a bounded domain with Lipschitz boundary, then its Dirichlet and Neumann Laplacians both have purely discrete spectrum, whereas on unbounded or non-smooth domains, the situation is more subtle, at least in the Neumann case, see, e.g., \cite[Chapter 6]{AdaFou03} or \cite{DavSim92}. In particular, a family of bounded planar domains (the \textit{combs}) was introduced in \cite{HemSecSim91} that displays a remarkable behaviour: the Neumann Laplacian thereon has empty absolutely continuous spectrum but its essential spectrum is non-empty and contains infinitely many dilated copies of the set of accumulation points of some (possibly unbounded) sequence, see~\cite[Theorem~3.4]{HemSecSim91}.

In the case of {infinite} metric graphs, the issue of discreteness of the spectrum is significantly less well understood: {examples are known that show that the spectrum may or may not be purely discrete}~\cite{Sol04,Hae14,KosNic19,GerTru21}.  
Remarkably, even  infinite equilateral graphs may have purely discrete spectrum: this can, e.g., be seen by applying a well-known transference principle~\cite{Cat97} to suitable infinite combinatorial graphs with purely discrete spectrum, like the infamous \textit{lamplighter graph} (all of whose eigenvalues have infinite multiplicity).
There already exist some abstract criteria for the compactness of embeddings of Sobolev spaces over metric measure spaces, as introduced in~\cite{Haj96,Che99} see, e.g.,~\cite[Theorem~4.59]{Che99} and~\cite[Theorem~2]{Kal99}.
In particular, it has been known  since~\cite{Car00} that on metric graphs the spectrum is discrete provided the graph has finite volume.
Other criteria of a more geometric flavour have been provided in~\cite[Corollary~3.5 and Corollary~4.5]{KosNic19} and \cite[Theorem~4.18]{ExnKosMal18}.
We also mention \cite{Sol04}, which provides criteria for the discreteness of the spectrum and investigates the asymptotics of the eigenvalues on so-called \emph{regular trees}.

This paper is organised as follows:
Section~\ref{sec:2} contains definitions of metric graphs, Sobolev spaces, and extensions of the Laplace operator.
In Section~\ref{sec:embedding}, we provide criteria on discreteness or non-discreteness of the spectrum of these Laplacian extensions: our main contribution in this context is Theorem~\ref{theo:duefel}, a new Kolmogorov--Riesz-type criterion that, in particular, gives a sufficient and necessary condition for compactness of the embedding of Sobolev spaces over metric graphs $\Graph$ into the canonical $L^2$-space in terms of ``thinness'' of the periphery of the graph, measured by the $L^2$-norm of functions supported outside compact subgraphs of $\Graph$. 
Another criterion for discreteness of the Friedrichs extension on infinite trees from~\cite{KosNic19} is shown not to generalise to the Neumann extension. 
Subsection~\ref{H10yesH1not} puts these criteria to use by investigating a parameter-dependent family $\Graph_\alpha$ of metric graphs (the \emph{diagonal combs}) on which the spectra of the Neumann extensions exhibit a noteworthy phase transition from purely discrete to non-empty essential spectrum, despite all having infinite volume: a phenomenon which somewhat recalls Neumann Laplacians on (finite volume) cusp domains but which has no parallels on domains of infinite volume.  
To the best of our knowledge, Theorem~\ref{theo:duefel} is the first criterion able to identify this transition in the context of quantum graphs. Beyond quantum graphs, similar phenomena are elusive and have been discovered only recently: we mention e.g.\ the celebrated results about phase transition from pure point spectrum to absolute continuous spectrum for almost Mathieu operators, see~\cite{JitLiu18} and references therein. Weaker but comparable results were recently obtained in the context of metric graphs in \cite{KosNic21,KenMugTau24}.

Having determined criteria for discreteness of the spectrum, the next natural question is to investigate which bounds on eigenvalues from finite metric graphs remain valid on infinite metric graphs.
Section~\ref{sec:symmetrisation} is about lower bounds on eigenvalues.
Subsection~\ref{subsec:symmetrisation} introduces a symmetrisation technique and uses it to prove isoperimetric inequalities on infinite quantum graphs.
Subsection~\ref{subsec:further_lower_bounds} complements this with further lower bounds in terms of diameter and inradius.

Finally, Section~\ref{sec:other} complements the lower bounds in Section~\ref{sec:symmetrisation} by upper bounds in terms of diameter, volume and the first Betti number.

We mention in particular Theorem~\ref{prop:Makai} and Theorem~\ref{TheoremBettiD}. 

The former is a quantum graph analogue of the Hersch--Makai inequality which yields a lower bound on the lowest eigenvalue of the Dirichlet Laplacian in terms of the inradius on simply connected domains or convex domains. 
The estimate is valid if and only if the metric graph is a tree with a so-called \emph{centre vertex}, cf. Definition\ref{def:centre_vertex}. 
This is a class of trees which does not necessarily require radial symmetry and which, in light of Theorem~\ref{prop:Makai} exhibits a striking analogy to convex planar domains, see Section~\ref{subsubsec:Inradius} for a discussion.
The other bound features the diameter and the first Betti number of the graph. 
Both results are also new for compact graphs.

\section{Infinite metric and quantum graphs}
	\label{sec:2}

\subsection{Notation and elementary properties}
	\label{subsec:notation}

We will work with unoriented metric graphs with finite or countably infinite sets of vertices $ \mV $ and edges $ \mE $. 
We follow the formalism in~\cite{Mug19}, referring to that article for further details; in particular, we associate with each $\me\in\mE$ a length $\ell_\me\in (0,\infty)$ and regard $\Graph$ as a triple $\mathcal G (\mV,\mathcal E,\equiv)$, where $\mV$ is the vertex set, $\mathcal E:=\bigsqcup\limits_{\me\in\mE} [0,\ell_\me]$ is the set of metric edges, and $\equiv$ is an equivalence relation on $\mathcal E$ that encodes the connectivity of $\Graph$. We explicitly allow \emph{loops} (edges incident to only one vertex) and \emph{parallel edges}.

\begin{defi}\label{DefinitionInducedSubgraph}
A \emph{subgraph} of $\mathcal G= (\mV,\mathcal E,\equiv)$ is a metric graph $\Graph'=\mathcal G' (\mV',\mathcal E',\equiv')$ with $\mV'\subset \mV$, $\mathcal E'\subset \mathcal E$ and $\equiv'\subset \equiv$.

Any such subgraph is called an \emph{induced subgraph} of $\Graph$ if for any vertices $\mv,\mw$ in $\Graph'$, the set of edges $\me_{\mv,\mw}$ incident in both of them in $\Graph'$ is equal to the corresponding set in $\Graph$. 
In this case we denote by $\partial\mGraph'$ the set of points in $\mV'$ which form the topological boundary of $\mGraph'$ as a subset of $\mGraph$.
\end{defi}

We denote by $\mE_\mv = \{ \me \in \mE: \me = \me_{\mv,\mw} \text{ for some } \mw \in \mV\}$ the set of all edges incident to $\mv$. 
The \emph{degree} of a vertex $\mv \in \mV$, $\deg (\mv)$, is the number of edges incident to it, where loops are counted twice. 
A vertex $\mv$ is called a \emph{dummy vertex} if $\deg (\mv) = 2$. 
The deletion (or insertion) of a finite number of dummy vertices, where two incident edges are replaced by one of the same total length (or vice versa), does not affect the topology or metric structure of the graph nor does it affect any of the Laplace-type operators defined on it we are going to consider in this article.

We say that $\Graph$ is \emph{locally finite} if $\deg(\mv) < \infty$ for all $\mv \in \mV$, and \emph{compact} if, additionally, its edge set $\mE$ is finite.

We emphasise that neither changing the connectivity of a metric graph on a given set of metric edges $\mE\simeq \bigsqcup\limits_{\me\in\mE}[0,\ell_\me]$, nor inserting or deleting dummy vertices changes the \emph{volume} of $\Graph$ 
\[
|\mGraph|:=\|\ell_\me\|_1
\]

In particular, while our definition requires all edges to have finite length, this is no restriction, since edges of infinite length can be broken up into a countably infinite set of edges of finite length by inserting a countable set of evenly spaced dummy vertices, and are thus covered by our framework.

A metric graph is indeed a metric space with respect to the 
canonical shortest-path metric induced by the one-dimensional Euclidean distance and by the combinatorial distance on the underlying discrete graph, see~\cite{Mug19} for details.

In this paper we exclusively work with connected metric graphs. 
The \emph{diameter} of $\Graph$ is
\begin{equation}
	\label{eq:Diameter}
	\diam(\Graph) 
	:= 
	\sup_{x,y\in \Graph}\dista_{\Graph}(x,y),
\end{equation}
and we note that $\diam(\mGraph)\le |\mGraph|$.
Finally, a simple argument shows that a locally finite(!) connected metric graph $\Graph$ is a compact graph if and only if it is compact as a metric space.

A metric graph is also, in a canonical way, a measure space, endowed with the direct sum of Lebesgue measures on each interval $(0,\ell_\me)$. 
Throughout the article we will assume that $ \mGraph $ is locally finite and connected, which implies in particular that $\mGraph$ is a \emph{metric measure space}, cf.~\cite[Section~3]{Stu06}, in the sense that each $x\in\Graph$ has a neighbourhood of finite measure.

We will be mostly interested in \emph{infinite} metric graphs, i.e., for which $\mE$ is a (countably) infinite set. 
Note that under our assumptions, this is equivalent to $\mV$ being infinite.
At the risk of being redundant, we recapitulate some notions that are elementary in the case of finite metric graphs, but may lead to ambiguities in the infinite case.

\begin{defi}\label{DefinitionBetti}
Let $\mGraph$ be a locally finite, connected metric graph.
\begin{enumerate}
	\item
	A \emph{walk} is the image of a continuous map $c \colon [0,1] \to \mGraph$.
	\item
	A \emph{path} is an injective walk.
	\item 
	A \emph{cycle} $C \subset \mGraph$ is a compact subset of $\mGraph$ such that, for all $x,y \in C$, $C \setminus \{x,y\}$ consists of precisely two disjoint paths in $\mGraph$ connecting $x$ and $y$.
	\item 
	The \emph{(first) Betti number} $\beta \in \N_0 \cup \{\infty\}$ of $\mGraph$ is the cardinality of any basis of the cycles of $\mGraph$, that is, the cardinality of any minimal set of cycles of $\mGraph$ whose union, treated as a subset of $\mGraph$, contains all cycles in $\mGraph$.
	\item We call $\Graph$ a \emph{tree} if it contains no cycles as subgraphs.
	\item 
	We call $\mGraph$ \emph{doubly (path) connected} if, for all $x,y \in \mGraph$, there exist two paths $P_1, P_2 \subset \mGraph$ connecting $x$ and $y$, such that $P_1$ and $P_2$ intersect at at most finitely many vertices.
\end{enumerate}
\end{defi}

\begin{rem}\label{RemarkAlmostTree}
If $\beta < \infty$, then $\mGraph$ in essence consists of a compact ``core'', i.e., subgraph, to which finitely many tree subgraphs are attached. 
More precisely, since the union of all its (necessarily only finitely many) cycles will be compact, there exists a (generally non-unique) compact, connected subgraph $\mathcal{K}$ of $\mGraph$ which contains them all; thus, $\mGraph \setminus \mathcal{K}$ is a disjoint union of finitely many trees and each attached to $\mathcal{K}$ at a single vertex. These trees may be finite or infinite, and may have finite or infinite diameter. But we see that $\beta = 0$ if and only if $\mGraph$ is itself a tree. 
\end{rem}

Whenever $\mE$ is infinite, the \emph{ends} of $\Graph$ can be defined as in~\cite[Definition~3.1]{KosMugNic22} based on a classical combinatorial approach. For our purposes, a different (but equivalent, see~\cite[Section~2.2]{KosMugNic22} and references therein) notion of end of topological flavour is more convenient.

\begin{defi}[Topological end]\label{Topologisches Ende}
Consider a sequence $ \mathcal{U}=(U_n) $
of non-empty open connected subsets of $ \mGraph $ with compact boundaries and such that $ U_{n+1} \subset U_{n} $ for all $ n \in \mathbb{N} $ and $ \bigcap_{n \in \mathbb{N}} \overline{U_n} = \emptyset $. Two such sequences $ \mathcal{U}=(U_n)$ and $  \mathcal{U'}=(U'_n)$ are called \emph{equivalent} if for all $ n \in \mathbb{N} $ there exist $ j,k $, such that  $U'_j \subset U_n $ and $ U_k \subset U'_n $. An equivalence class  $ \gamma $ of sequences is called a \emph{topological end} -- or simply an \emph{end} -- of  $ \mGraph $ and $ \mathfrak{C}(\mGraph) $ denotes the set of all (topological) ends of $ \mGraph $.  \end{defi}

\begin{rem}\label{rem:redu}
Let us stress that \emph{ends are not vertices}. In particular, an end cannot be an endpoint of an edge. Thus a subset or subgraph of $\Graph$ is compact if and only if it is closed in $\Graph$ and intersects a finite number of edges of $\Graph$.
\end{rem}

Denote by $\overline{\mGraph}$ the metric completion of $\mGraph$.
Note that if $|\mGraph| < \infty$, then $\overline{\mGraph}$ is a compact metric space.

\begin{defi} \label{def:finvol}
An end $\gamma \in \mathfrak{C}(\mGraph)$ has \emph{finite volume} if there is a sequence $\mathcal U = (U_n)$  representing $\gamma$ such that the volume $|U_n|$ is finite for some $n$.
Otherwise, we say that $\gamma$ has \emph{infinite volume}.
\end{defi}

Here and throughout we will denote by $ \mathfrak{C}(\mGraph)$ and $ \mathfrak{C}_0(\mGraph)$ the set of ends of $\Graph$ and the set of ends of finite volume of $\Graph$, respectively. Note that for an end of finite volume we do not have a canonical way to define its "volume" as a non-negative number; we merely assert that the end is contained in a subgraph of $\mGraph$ of finite volume. 
In particular, $\mGraph$ will itself have finite volume if and only if all of its ends are of finite volume.

A related concept to the \emph{metric completion} $\overline \mGraph$ is the \emph{Freudenthal compactification} of $\mGraph$, the coarsest extension of the canonical topology of $\mGraph$ to $\mGraph \cup \mathfrak{C}(\mGraph)$ which renders this space compact.	
The relation between these two spaces is a delicate business. 
We refer to Appendix~\ref{app:completions} for a proper discussion and only summarise the following important facts:
There is a canonical way to identify points in $\overline \mGraph \backslash \mGraph$ with ends $\gamma \in \mathfrak{C}(\mGraph)$, see Proposition~\ref{prop:identifiying-metric-with-top-ends}.
However this map neither needs to be injective nor surjective, see Example~\ref{exa:marvin-ladder}.
Nevertheless, in important special cases, this map will lead to a homeomorphism which ensures that under certain conditions the notions of metric completion and the Freudenthal compactification coincide.
This includes graphs where all ends have finite volume, cf.\ Corollary~\ref{cor:all_graphs_end_finite_volume} (hence in particular graphs of finite volume as in Sections~\ref{sec:symmetrisation} and~\ref{sec:other}), as well as graphs of finite diameter with only one end such as the diagonal combs investigated in Section~\ref{H10yesH1not}.

\subsection{Compact exhaustions}
	\label{subsec:compact}
For an infinite graph $\mGraph$, we are going to consider appropriate sequences of compact subgraphs $\mGraph_n$ which approximate $\mGraph$ in a suitable way (cf.\ \cite[Section~2.2]{KosMugNic22}).
This will allow us to generalise many properties of $\mGraph_n$ directly to $\mGraph$, in particular some eigenvalue bounds.

\begin{defi}\label{DefinitionApprox}
Let $\mGraph$ be a locally finite, connected metric graph. 
A sequence of induced subgraphs $(\mGraph_n)_{n\in\N}$ is called a \emph{compact exhaustion} of $\mGraph$ if the following conditions are satisfied:
\begin{enumerate}
\item $\mGraph_1 \subset \mGraph_2 \subset \ldots \subset \mGraph$;
\item for all $\me \in \mE$ there exists $n \in \N$ such that $\me \subset \mGraph_n$;
\item each $\mGraph_n$ is connected;
\item for all $n \in \N$, $\mGraph_n$ is a compact subgraph of $\mGraph$.
\end{enumerate}
\end{defi}

Recall from Remark~\ref{rem:redu} that compact subgraphs of $\mGraph$ are exactly those which intersect only a finite number of edges of $\mGraph$. 
Properties (1) and (2) imply in particular $\bigcup_{n \in \N} \mGraph_n = \mGraph$ and that, if $\mGraph$ is compact, then every compact exhaustion of $\mGraph$ will eventually become stationary.

\begin{exa}\label{ExampleApprox}
We will often use the following construction, which yields a compact exhaustion in any locally finite, connected graph $\mGraph$: Fix a vertex $\mv \in \mV$.  Then, any $\mw \in \mV$ has a well-defined combinatorial distance to $\mv$, see for instance~\cite[Section~1.3]{Die17}.
Set
\[
	\mGraph_{\mv,n}
\] to be the induced subgraph of $\mGraph$ containing all vertices of $\mGraph$ of combinatorial distance at most $n \in \N$ to $\mv$. 
Recall from Definition~\ref{DefinitionInducedSubgraph} that the adjacency relations in $\mGraph_{\mv,n}$ are chosen to mirror those in $\mGraph$.
Then by construction $(\mGraph_{\mv,n})_{n\in\N}$ is a compact exhaustion of $\mGraph$. 
In fact, more is true: if $\mGraph$ has an infinite edge set, then $\mGraph_{\mv,n}$ is \emph{compactly} contained in a proper open subset of $\mGraph_{\mv,n+1}$ for all $n \in \N$.
\end{exa}

The following Lemma~\ref{LemmaEquivApprox}, which states that all compact exhaustions are equivalent, follows directly from Definition~\ref{DefinitionApprox}.
In particular, it justifies that we always use the compact exhaustions of Example~\ref{ExampleApprox}. 

\begin{lemma}\label{LemmaEquivApprox}
Let $\mGraph$ be a locally finite, connected metric graph, let $\mv \in \mV$ be any vertex of $\mGraph$, let $(\mGraph_{\mv,n})_{n\in\N}$ be the compact exhaustion from Example~\ref{ExampleApprox}, and let $(\mathcal{G}_n)_{n\in\N}$ be any other compact exhaustion of $\mGraph$. Then for all sufficiently large $n \in \N$ there exist positive integers $k_1 = k_1 (n)$ and $k_2 = k_2 (n)$ such that
\begin{equation}
\label{eq:EquivApprox}
	\mGraph_{\mv,k_1} \subset \mathcal{G}_n \subset \mGraph_{\mv,k_2}.
\end{equation}
Moreover, $k_2 \to \infty$, and $k_1$ can be chosen to tend to $\infty$, as $n \to \infty$.
\end{lemma}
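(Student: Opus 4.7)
The plan is to exploit the fact that any compact subgraph of $\mGraph$, by Remark~\ref{rem:redu}, intersects only finitely many edges, combined with local finiteness which makes each combinatorial ball $\mGraph_{\mv, k}$ itself compact. The statement is trivial if $\mGraph$ is compact (both exhaustions eventually stabilise to $\mGraph$), so I shall focus on the case of an infinite edge set; I would mention the compact case briefly at the start.

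For the upper containment $\mathcal{G}_n \subset \mGraph_{\mv, k_2}$, I would argue as follows. By property (4) of Definition~\ref{DefinitionApprox}, $\mathcal{G}_n$ is compact and hence contains only finitely many edges, and therefore only finitely many vertices. Define
\[
k_2 = k_2(n) := \max\bigl\{d_{\mathrm{comb}}(\mv,\mw) : \mw \text{ is a vertex of } \mathcal{G}_n\bigr\},
\]
where $d_{\mathrm{comb}}$ denotes the combinatorial distance (which is finite on each finite set of vertices of the connected graph $\mGraph$). Then every vertex of $\mathcal{G}_n$ lies in $\mGraph_{\mv, k_2}$; since both $\mathcal{G}_n$ and $\mGraph_{\mv, k_2}$ are induced subgraphs of $\mGraph$, every edge of $\mathcal{G}_n$ — which by Definition~\ref{DefinitionInducedSubgraph} is an edge of $\mGraph$ between two of its vertices — automatically lies in $\mGraph_{\mv, k_2}$ as well. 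For the lower containment, I would first use property (2) applied to any edge incident to $\mv$ to conclude $\mv \in \mathcal{G}_n$ for all $n$ large enough, and then define $k_1(n)$ to be the largest integer $k$ with $\mGraph_{\mv,k} \subset \mathcal{G}_n$ (which is nonnegative and finite because $\mathcal{G}_n$ itself is compact, hence not all of $\mGraph$ in the infinite case).

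The remaining task is the divergence as $n \to \infty$. For $k_1 \to \infty$, fix any $K \in \N$; by Assumption~\ref{assum:locally-finite}, the set of vertices of combinatorial distance at most $K$ from $\mv$ is finite, so $\mGraph_{\mv,K}$ has only finitely many edges. By property (2) of Definition~\ref{DefinitionApprox} applied to $(\mathcal{G}_n)$, each of these edges eventually belongs to some $\mathcal{G}_n$, and by property (1) it stays there; taking the maximum over this finite collection yields $N = N(K)$ such that $\mGraph_{\mv, K} \subset \mathcal{G}_n$ for all $n \geq N$, giving $k_1(n) \geq K$. For $k_2 \to \infty$ in the infinite case, note that if $k_2(n)$ were bounded by some $K$ along a subsequence, then the $\mathcal{G}_n$ along this subsequence would all be contained in the finite subgraph $\mGraph_{\mv, K}$, contradicting $\bigcup_n \mathcal{G}_n = \mGraph$.

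I do not anticipate a serious obstacle: the key subtlety is simply to handle the compact versus infinite case separately, and to use the induced-subgraph hypothesis to transfer containment of vertices to containment of edges. Everything else follows from bookkeeping on finitely many edges at a time, which is where the locally finite assumption enters in an essential way.
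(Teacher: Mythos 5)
Your proof is correct. The paper supplies no argument for this lemma at all --- it merely asserts that it ``follows directly from Definition~\ref{DefinitionApprox}'' --- and what you have written is exactly that direct verification: compactness of $\mathcal{G}_n$ gives finitely many vertices, the induced-subgraph property converts vertex containment into containment of subgraphs for $\mathcal{G}_n\subset\mGraph_{\mv,k_2}$, and properties (1)--(2) of the definition together with local finiteness give $\mGraph_{\mv,K}\subset\mathcal{G}_n$ for large $n$, hence the lower inclusion and the divergence of $k_1$ and $k_2$.
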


For compact $\mGraph$ there are two equivalent ways to define the first Betti number $\beta$: Either as the number of independent cycles in $\mGraph$, as in Definition~\ref{DefinitionBetti}, or as $\beta = \# \mE - \# \mV + 1$. 
Obviously, the second definition no longer has a meaning on infinite graphs. 
However, it can be obtained via compact exhaustions, as stated in the following proposition.

\begin{prop}\label{PropBetti}
Let $\mGraph$ be a locally finite, connected metric graph with Betti number $\beta = \beta (\mGraph)$, and let $(\Graph_n)_{n \in \N}$ be a compact exhaustion of $\mGraph$. 
\begin{enumerate}
\item If $\beta < \infty$, then $\beta (\mGraph_n)$ is eventually constant and equal to $\beta$.
\item If $\beta = \infty$, then $\beta (\mGraph_n) \to \infty$.
\end{enumerate}
\end{prop}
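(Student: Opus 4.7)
The plan is to combine a monotonicity property of $\beta(\Graph_n)$ with the structural description of $\mGraph$ afforded by Remark~\ref{RemarkAlmostTree}. Two preparatory facts will be used throughout. First, by Remark~\ref{rem:redu} any compact subset of $\mGraph$ meets only finitely many edges of $\mGraph$; together with property~(2) of Definition~\ref{DefinitionApprox} this implies that every compact $\mathcal K\subset\mGraph$ is contained in $\Graph_n$ for all sufficiently large $n$. Second, for each compact connected graph $H$ the Betti number of Definition~\ref{DefinitionBetti} admits the alternative description $\beta(H)=\#\mE_H-\#\mV_H+1$, as recalled immediately before the proposition.

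I first establish that $(\beta(\Graph_n))_n$ is nondecreasing. Setting $v:=\#\mV_{n+1}-\#\mV_n$ and $e:=\#\mE_{n+1}-\#\mE_n$, connectedness of $\Graph_{n+1}$ together with $\Graph_n\subset\Graph_{n+1}$ allows a spanning tree of $\Graph_n$ to be extended to a spanning tree of $\Graph_{n+1}$ by adjoining exactly $v$ new edges, forcing $e\ge v$ and hence $\beta(\Graph_{n+1})-\beta(\Graph_n)=e-v\ge 0$.

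For part~(1), suppose $\beta<\infty$. Remark~\ref{RemarkAlmostTree} furnishes a compact connected subgraph $\mathcal K\subset\mGraph$ containing every cycle of $\mGraph$, with $\mGraph\setminus\mathcal K$ a finite disjoint union of trees, each attached to $\mathcal K$ at a single vertex. The first preparatory fact gives $\mathcal K\subset\Graph_{n_0}$ for some $n_0$; for each $n\ge n_0$, connectedness and inducedness of $\Graph_n$ force it to consist of $\mathcal K$ together with finitely many subtrees of the attached trees, each still meeting $\mathcal K$ at a single vertex. Such attachments add no cycles, so the cycles of $\Graph_n$, of $\mathcal K$, and of $\mGraph$ coincide, whence any minimal cover is common to all three: $\beta(\Graph_n)=\beta(\mathcal K)=\beta$.

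For part~(2), suppose $\beta=\infty$. I inductively construct cycles $C_1,C_2,\dots\subset\mGraph$ with $C_{k+1}\not\subseteq\bigcup_{i\le k}C_i$: such a $C_{k+1}$ must exist, for otherwise the finite family $\{C_1,\dots,C_k\}$ would already cover every cycle of $\mGraph$, contradicting $\beta=\infty$. Given any $N\in\N$, the compact union $\bigcup_{i\le N}C_i$ sits in some $\Graph_{n_0}$, and inside the compact graph $\Graph_{n_0}$ the cycles $C_1,\dots,C_N$ are linearly independent in the $\mathbb{F}_2$-cycle space---each $C_k$ contains an edge absent from $C_1\cup\dots\cup C_{k-1}$, ruling out any nontrivial relation by a largest-index argument. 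Hence $\beta(\Graph_{n_0})\ge N$, and by monotonicity $\beta(\Graph_n)\ge N$ for every $n\ge n_0$; since $N$ is arbitrary, $\beta(\Graph_n)\to\infty$. I expect the main technical subtlety to be the bookkeeping needed to reconcile the paper's minimal-cover definition of $\beta$ with the $\mathbb{F}_2$-cycle-space viewpoint used in the monotonicity and independence arguments; once that equivalence is invoked for compact graphs, the remainder is structural and rests on Remark~\ref{RemarkAlmostTree}.
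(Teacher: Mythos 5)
Your proof is correct, and it follows the same overall skeleton as the paper's — the inequality $\beta(\mGraph_n)\le\beta$, monotonicity of $\beta(\mGraph_n)$, and the observation that sufficiently many cycles are eventually swallowed by the exhaustion — but the implementation differs at two points worth comparing. First, the paper obtains monotonicity only for the canonical exhaustion $(\mGraph_{\mv,n})$ of Example~\ref{ExampleApprox}, appealing to its ``maximal connectivity'', and then transfers the result to an arbitrary exhaustion via the sandwich \eqref{eq:EquivApprox} of Lemma~\ref{LemmaEquivApprox}; you instead prove monotonicity directly for any compact exhaustion by the spanning-tree count $e\ge v$ applied to $\beta=\#\mE-\#\mV+1$, which makes the reduction step (and hence Lemma~\ref{LemmaEquivApprox}) unnecessary here. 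Second, for $\beta=\infty$ the paper simply asserts that ``one finds $m$ independent cycles'', while you make this precise: the greedy choice $C_{k+1}\not\subseteq\bigcup_{i\le k}C_i$ is exactly what the minimal-cover definition of $\beta$ licenses, and the largest-index argument converts it into $\mathbb{F}_2$-independence inside a compact $\mGraph_{n_0}$. The one point you gloss over — that $C_k\setminus\bigcup_{i<k}C_i$ being nonempty yields a whole \emph{edge} of $C_k$ outside the union, not merely a point — does hold (the difference is relatively open in $C_k\cong S^1$, hence contains interior points of edges, and a cycle containing an interior point of an edge contains the entire edge), but deserves a sentence. Your part (1), routed through the compact core $\mathcal K$ of Remark~\ref{RemarkAlmostTree} rather than through a finite basis of cycles, is an equivalent and equally valid way of concluding that the cycle sets of $\mGraph_n$, $\mathcal K$ and $\mGraph$ coincide for large $n$.
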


In particular, for infinite graphs, $\beta$ can alternatively be defined as
\begin{displaymath}
	\lim_{n \to \infty} \beta (\mGraph_n) = \lim_{n\to \infty} \left( \# \mE (\mGraph_n) - \# \mV (\mGraph_n) + 1 \right),
\end{displaymath}
independently of the choice of the compact exhaustion $(\mGraph_n)_{n\in\N}$.

\begin{proof}
Without loss of generality, we may assume that $\mGraph$ is infinite. 
Note that (1) $\beta (\mGraph_n) \in \N_0$ for all $n \in \N$; and (2) every cycle in $\mGraph_n$ is also a cycle in $\mGraph$, so $\beta (\mGraph_n) \leq \beta$ for all $n \in \N$. 
Fix $\mv \in \mV$ arbitrary and let $(\mGraph_{\mv,n})_{n\in\N}$ be the compact exhaustion from Example~\ref{ExampleApprox}. 
Then, \eqref{eq:EquivApprox} and the fact that the $\mGraph_{\mv,n}$ as induced subgraphs have maximal connectivity imply that (with $k_1$ and $k_2$ as in \eqref{eq:EquivApprox})
\begin{equation}\label{EquationApproximationBetti}
	\beta (\mGraph_{\mv,k_1}) \leq \beta (\mGraph_n) \leq \beta (\mGraph_{\mv,k_2}),
\end{equation}
for sufficiently large $n$.

Thus it suffices to prove the proposition for the sequence $(\mGraph_{\mv,n})_{n\in\N}$. 
Note that the maximal connectivity also implies that $\beta(\mGraph_{\mv,n})$ is a non-decreasing sequence in $n \in \N$.
If $\beta < \infty$, there is a finite basis of cycles of $\mGraph$. 
Since every point on every cycle in this basis may be reached from $\mv$ via a path consisting of a finite number of edges, all of these cycles will eventually be contained in $\mGraph_{\mv,n}$.
In particular, $\beta (\mGraph_{\mv,n}) = \beta$ for all sufficiently large $n \in \N$.

If $\beta = \infty$, then for every $m \in \N$ one finds $m$ independent cycles which contain finite sets of edges and vertices. 
Thus, for sufficiently large $n$, they will be completely contained in $\mGraph_{\mv,n}$, and so $\beta(\mGraph_{\mv,n}) \geq m$.
Since $m$ was arbitrary, this shows the claim.
\end{proof}

We turn to the diameter of compact exhaustions. 
The assumption that they are connected ensures that they all have finite diameter.

\begin{prop}\label{PropDiameterBelow}
Let $\mGraph$ be a locally finite, connected metric graph and let $(\mGraph_n)_{n \in \N}$ be any compact exhaustion of $\mGraph$. 
If $\mGraph$ has finite diameter $\diam(\mGraph)$, then
	\begin{equation}
	\label{eq:InequalityInfiniteDiameter}
	\diam(\mGraph) 
	\leq 
	\liminf_{n\to\infty} \diam(\mGraph_n).
\end{equation}
If the diameter of $\mGraph$ is infinite, then
\begin{equation}\label{EquationInfiniteDiameter}
	\lim_{n\to\infty} \diam(\mGraph_n) 
	=
	\diam(\mGraph) = \infty.
\end{equation} 
\end{prop}

\begin{proof}
Let $x_k,y_k \in \mGraph$ be such that $\dista_{\mGraph} (x_k,y_k) \to \diam(\mGraph) \in [0,\infty]$. Since $(\mGraph_n)_{n \in \N}$ is a compact exhaustion, by properties (1) and (2), for each $k \in \N$ there exists some $n_k \in \N$ such that $x_k,y_k \in \mGraph_n$ for all $n \geq n_k$. Now since $\mGraph_n \subset \mGraph$ we clearly have $\dista_{\mGraph_n} (x,y) \geq \dista_{\mGraph} (x,y)$ for all $x,y \in \mGraph_n$ and all $n \in \N$; putting all this together, it follows that
\begin{displaymath}
	\dista_{\mGraph} (x_k,y_k) \leq \dista_{\mGraph_n} (x_k,y_k) \leq \diam(\mGraph_n)
\end{displaymath}
for all $n \geq n_k$. Since $\dista_{\mGraph} (x_k,y_k) \to \diam(\mGraph)$, the claim follows.
\end{proof}

\begin{exa}
We consider the infinite ladder graph \(\Graph\) as depicted in Figure \ref{fig:ladder-graph}. The graph edges will be denoted by \(\me_n, \mf_n^1\) and \(\mf_n^2\) for \(n\in\mathbb N\) (see Figure \ref{fig:ladder-graph} for the notation).
\begin{figure}[ht]
\centering
\begin{tikzpicture}
\coordinate (a1) at (0,0);
\coordinate (b1) at (2,0);
\coordinate (c1) at (4,0);
\coordinate (d1) at (6,0);
\coordinate (e1) at (8,0);
\coordinate (f1) at (9,0);
\coordinate (a2) at (0,2);
\coordinate (b2) at (2,2);
\coordinate (c2) at (4,2);
\coordinate (d2) at (6,2);
\coordinate (e2) at (8,2);
\coordinate (f2) at (9,2);
\draw[thick]  (a1) edge node [below] {\(\mf_1^1\)} (b1);
\draw[thick]  (b1) edge node [below] {\(\mf_2^1\)} (c1);
\draw[thick]  (c1) edge node [below] {\(\mf_3^1\)} (d1);
\draw[thick]  (d1) edge node [below] {\(\mf_4^1\)} (e1);
\draw[thick]  (a2) edge node [above] {\(\mf_1^2\)} (b2);
\draw[thick]  (b2) edge node [above] {\(\mf_2^2\)} (c2);
\draw[thick]  (c2) edge node [above] {\(\mf_3^2\)} (d2);
\draw[thick]  (d2) edge node [above] {\(\mf_4^2\)} (e2);
\draw[thick]  (a1) edge node [left] {\(\me_1\)} (a2);
\draw[thick]  (b1) edge node [left] {\(\me_2\)} (b2);
\draw[thick]  (c1) edge node [left] {\(\me_3\)} (c2);
\draw[thick]  (d1) edge node [left] {\(\me_4\)} (d2);
\draw[thick]  (e1) edge node [left] {\(\me_5\)} (e2);
\draw[thick,dashed]  (e1) edge (f1);
\draw[thick,dashed]  (e2) edge (f2);
\draw[fill] (a1) circle (1.75pt);
\draw[fill] (b1) circle (1.75pt);
\draw[fill] (c1) circle (1.75pt);
\draw[fill] (d1) circle (1.75pt);
\draw[fill] (e1) circle (1.75pt);
\draw[fill] (a2) circle (1.75pt);
\draw[fill] (b2) circle (1.75pt);
\draw[fill] (c2) circle (1.75pt);
\draw[fill] (d2) circle (1.75pt);
\draw[fill] (e2) circle (1.75pt);
\end{tikzpicture}
\caption{The infinite ladder graph.}\label{fig:ladder-graph}
\end{figure}
Suppose that the edge lengths of \(\Graph\) are given by \(\ell_{\mf_n^1}=\ell_{\mf_n^2}=\frac{1}{2^n}\) and \(\ell_{\me_n}=1\) for all \(n\in\mathbb N\). Then, the diameter of \(\Graph\) is given by
	\[
		\diam(\Graph) = \ell_{\me_1}+\sum_{n=1}^\infty \ell_{\mf_n^1} = \sum_{n=0}^\infty \frac{1}{2^n} = 2.
	\]
Now, let \((\Graph_n)_{n\in\mathbb N}\) be the compact exhaustion of \(\Graph\) given by
	\[
		\Graph_n = \bigcup_{k=1}^{n+1}\me_k \cup \bigcup_{k=1}^{n}\mf_k^1 \cup \bigcup_{k=1}^{n-1}\mf_k^2
	\]
	(see \autoref{fig:ladder-graph-exhaustion}).
\begin{figure}[ht]
\begin{minipage}[t]{0.2\textwidth}
\centering
\begin{tikzpicture}
\coordinate (a1) at (0,0);
\coordinate (b1) at (2,0);
\coordinate (a2) at (0,2);
\coordinate (b2) at (2,2);
\draw[thick]  (a1) edge node [below] {\(\mf_1^1\)} (b1);
\draw[thick]  (a1) edge node [left] {\(\me_1\)} (a2);
\draw[thick]  (b1) edge node [left] {\(\me_2\)} (b2);
\draw[fill] (a1) circle (1.75pt);
\draw[fill] (b1) circle (1.75pt);
\draw[fill] (a2) circle (1.75pt);
\draw[fill] (b2) circle (1.75pt);
\end{tikzpicture}
\end{minipage}
\begin{minipage}[t]{0.3\textwidth}
\centering
\begin{tikzpicture}
\coordinate (a1) at (0,0);
\coordinate (b1) at (2,0);
\coordinate (c1) at (4,0);
\coordinate (a2) at (0,2);
\coordinate (b2) at (2,2);
\coordinate (c2) at (4,2);
\draw[thick]  (a1) edge node [below] {\(\mf_1^1\)} (b1);
\draw[thick]  (b1) edge node [below] {\(\mf_2^1\)} (c1);
\draw[thick]  (a2) edge node [above] {\(\mf_1^2\)} (b2);
\draw[thick]  (a1) edge node [left] {\(\me_1\)} (a2);
\draw[thick]  (b1) edge node [left] {\(\me_2\)} (b2);
\draw[thick]  (c1) edge node [left] {\(\me_3\)} (c2);
\draw[fill] (a1) circle (1.75pt);
\draw[fill] (b1) circle (1.75pt);
\draw[fill] (c1) circle (1.75pt);
\draw[fill] (d1) circle (1.75pt);
\draw[fill] (a2) circle (1.75pt);
\draw[fill] (b2) circle (1.75pt);
\draw[fill] (c2) circle (1.75pt);
\end{tikzpicture}
\end{minipage}
\begin{minipage}[t]{0.4\textwidth}
\centering
\begin{tikzpicture}
\coordinate (a1) at (0,0);
\coordinate (b1) at (2,0);
\coordinate (c1) at (4,0);
\coordinate (d1) at (6,0);
\coordinate (a2) at (0,2);
\coordinate (b2) at (2,2);
\coordinate (c2) at (4,2);
\coordinate (d2) at (6,2);
\draw[thick]  (a1) edge node [below] {\(\mf_1^1\)} (b1);
\draw[thick]  (b1) edge node [below] {\(\mf_2^1\)} (c1);
\draw[thick]  (c1) edge node [below] {\(\mf_3^1\)} (d1);
\draw[thick]  (a2) edge node [above] {\(\mf_1^2\)} (b2);
\draw[thick]  (b2) edge node [above] {\(\mf_2^2\)} (c2);
\draw[thick]  (a1) edge node [left] {\(\me_1\)} (a2);
\draw[thick]  (b1) edge node [left] {\(\me_2\)} (b2);
\draw[thick]  (c1) edge node [left] {\(\me_3\)} (c2);
\draw[thick]  (d1) edge node [left] {\(\me_4\)} (d2);
\draw[fill] (a1) circle (1.75pt);
\draw[fill] (b1) circle (1.75pt);
\draw[fill] (c1) circle (1.75pt);
\draw[fill] (d1) circle (1.75pt);
\draw[fill] (a2) circle (1.75pt);
\draw[fill] (b2) circle (1.75pt);
\draw[fill] (c2) circle (1.75pt);
\draw[fill] (d2) circle (1.75pt);
\end{tikzpicture}
\end{minipage}

\caption{From left to right, the graphs \(\Graph_1\), \(\Graph_2\) and \(\Graph_3\).}\label{fig:ladder-graph-exhaustion}
\end{figure}
Then, the diameter of graph \(\Graph_n\) is given by
\[
	\diam(\Graph_n) = \ell_{\me_1}+\sum_{k=1}^n \ell_{\mf_k^1} + \ell_{\me_{n+1}} =2 + \sum_{k=1}^n \frac{1}{2^k}.
\]
Consequently, we have
\[
	\diam(\Graph)=2 < 3 =\lim_{n\rightarrow \infty}\diam(\Graph_n).
\]
In particular, the graph \(\Graph\) and the compact exhaustion \((\Graph_n)_{n\in\mathbb N}\) indeed provide an example for strict inequality in \eqref{eq:InequalityInfiniteDiameter}. We leave it as an open question if metric graphs exist such that the inequality in~\eqref{eq:InequalityInfiniteDiameter} is strict for \textit{any} compact exhaustion.
\end{exa}

\begin{prop}\label{PropDiameterConvergence}
Let $\mGraph$ be a locally finite, connected metric graph. If $\diam(\mGraph) < \infty$ assume in addition that the Betti number $\beta$ is finite. Then there exists a compact exhaustion $(\mGraph_n)_{n \in \N}$ such that $\diam(\mGraph_n) \to \diam(\mGraph)$.
\end{prop}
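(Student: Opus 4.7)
The plan is to split into the cases $D(\mGraph) = \infty$ and $D(\mGraph) < \infty$, and in both to use the canonical compact exhaustion $(\mGraph_{\mv,n})_{n \in \N}$ from Example~\ref{ExampleApprox} for some fixed vertex $\mv \in \mV$. If $D(\mGraph) = \infty$, then the identity \eqref{EquationInfiniteDiameter} of Proposition~\ref{PropDiameterBelow} immediately yields $D(\mGraph_{\mv,n}) \to \infty = D(\mGraph)$, so this case is trivial and no assumption on $\beta$ is needed.

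Suppose then that $D(\mGraph) < \infty$ and $\beta < \infty$. Proposition~\ref{PropDiameterBelow} already gives $\liminf_n D(\mGraph_{\mv,n}) \ge D(\mGraph)$, so it suffices to show $D(\mGraph_{\mv,n}) \le D(\mGraph)$ for all sufficiently large $n$. Invoking Remark~\ref{RemarkAlmostTree}, I would fix a compact connected subgraph $\mathcal{K} \subset \mGraph$ containing every cycle, with $\mGraph \setminus \mathcal{K}$ a finite disjoint union of tree subgraphs $\mathcal{T}_1,\ldots,\mathcal{T}_k$, each attached to $\mathcal{K}$ at a single cut vertex $\mv_i$. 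Because $\mathcal{K}$ has only finitely many vertices, all at finite combinatorial distance from $\mv$, we have $\mathcal{K} \subset \mGraph_{\mv,n}$ once $n$ exceeds this maximum.

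The key claim is that, for all such $n$, $\dist_{\mGraph_{\mv,n}}(x,y) = \dist_{\mGraph}(x,y)$ for every $x,y \in \mGraph_{\mv,n}$, which immediately gives $D(\mGraph_{\mv,n}) \le D(\mGraph)$. To prove it, I would argue that every geodesic in $\mGraph$ joining two points of $\mGraph_{\mv,n}$ decomposes, via the cut vertices $\mv_i$, into at most two tree-segments sitting inside some $\mathcal{T}_i, \mathcal{T}_j$ together with one segment running through $\mathcal{K}$. The $\mathcal{K}$-segment lies in $\mGraph_{\mv,n}$ by construction. Within a tree $\mathcal{T}_i$, the combinatorial distance from $\mv$ to any vertex $\mw \in \mathcal{T}_i$ equals the combinatorial distance from $\mv$ to $\mv_i$ plus the combinatorial distance from $\mv_i$ to $\mw$, since $\mv_i$ is a cut vertex; hence this distance is monotone along the unique tree path from $\mv_i$ towards $\mw$. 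Consequently, every vertex visited by a tree-segment has combinatorial distance to $\mv$ bounded by the maximum of those of its two endpoints, hence at most $n$, and so belongs to $\mGraph_{\mv,n}$.

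The main obstacle will be rigorously verifying the geodesic decomposition, which rests on the facts that $\mathcal{K}$ is geodesically convex in $\mGraph$ (excursions into a tree and back to the same cut vertex cannot shorten any path) and that the induced subgraph $\mGraph_{\mv,n}$ automatically contains every edge whose endpoints it contains. Once this is in place, combining the upper bound $\limsup_n D(\mGraph_{\mv,n}) \le D(\mGraph)$ with the lower bound from Proposition~\ref{PropDiameterBelow} yields $\lim_n D(\mGraph_{\mv,n}) = D(\mGraph)$, completing the proof.
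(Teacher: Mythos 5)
Your overall strategy coincides with the paper's: the same case split via Proposition~\ref{PropDiameterBelow}, the same exhaustion $(\mGraph_{\mv,n})$, and the same key identity $\dist_{\mGraph_{\mv,n}}(x,y)=\dist_{\mGraph}(x,y)$ for large $n$, yielding $D(\mGraph_{\mv,n})\le D(\mGraph)$. Where you diverge is in how that identity is verified. The paper takes an \emph{arbitrary walk} $P$ in $\mGraph$ between $x,y\in\mGraph_{\mv,n}$ and observes that, for $n$ large enough that all cycles lie in $\mGraph_{\mv,n}$, each component of $\mGraph\setminus\mGraph_{\mv,n}$ is a tree attached at a single vertex $\mw$; any excursion of $P$ into such a component must enter and exit through $\mw$, so it can be excised, producing a shorter walk inside $\mGraph_{\mv,n}$. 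You instead anchor the decomposition on a fixed compact core $\mathcal{K}$ and track combinatorial distances along the tree-segments of a shortest path. Both routes work, but the paper's is leaner: it needs no convexity of $\mathcal{K}$, no monotonicity of distance in the trees, and no minimizing path.

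Two points in your version need attention. First, you argue with ``every geodesic in $\mGraph$''; as the paper's own remark after Definition~\ref{DefinitionBetti} points out, infinite metric graphs need not be geodesic spaces, so a minimizer may not exist. This is repairable --- apply your decomposition to injective paths whose length is within $\varepsilon$ of $\dist_{\mGraph}(x,y)$ (injectivity alone forces the decomposition, since a path entering a tree $\mathcal{T}_i$ other than at its endpoints would have to pass through the cut vertex $\mv_i$ twice) --- but as written the step is incomplete. Second, your claimed additivity $\dist(\mv,\mw)=\dist(\mv,\mv_i)+\dist(\mv_i,\mw)$ for $\mw\in\mathcal{T}_i$ fails when the base vertex $\mv$ itself lies in $\mathcal{T}_i$, which Remark~\ref{RemarkAlmostTree} does not exclude; in that case the monotonicity you invoke must be replaced by the standard tree-median estimate ($\dist(\mv,\mw)\le\max\{\dist(\mv,a),\dist(\mv,b)\}$ for $\mw$ on the tree path from $a$ to $b$), which still gives the bound by $n$. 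With those two repairs your argument is sound, but you may prefer the paper's excursion-removal argument, which sidesteps both issues.
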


We again recall that, by Remark~\ref{rem:redu}, finiteness of the Betti number is equivalent to the condition that there is a compact subgraph of $\Graph$ containing all cycles.
In the case that $\diam(\mGraph) < \infty$, our proof will show that $\diam(\mGraph_n) \leq \diam(\mGraph)$ for sufficiently large $n$.

\begin{proof}
It suffices to consider the case that $\mGraph$ is infinite and, due to \eqref{EquationInfiniteDiameter}, that $\diam(\mGraph) < \infty$. 
We fix $\mv\in\mV$, and consider the compact exhaustion $\mGraph_{\mv,n}$ of Example~\ref{ExampleApprox}. 
For $n \in \N$ large enough that all cycles of $\mGraph$ are contained in $\mGraph_{\mv,n}$, as noted in Remark~\ref{RemarkAlmostTree}, $\mGraph \setminus \mGraph_{\mv,n}$ is a disjoint union of trees, each attached to $\mGraph_{\mv,n}$ at a single vertex.

We claim that, for such $n$, 
	\begin{equation}\label{EquationDiameterExhaustion}
		\dista_{\mGraph_{\mv,n}} (x,y) = \dista_{\mGraph} (x,y) \qquad \text{for all } x,y \in \mGraph_{\mv,n}.
	\end{equation}
Indeed, let $P$ be a walk in $\mGraph$ from $x$ to $y$, see Definition~\ref{DefinitionBetti}. 
It suffices to find a walk $P_n$ in $\mGraph_{\mv,n}$ connecting $x$ and $y$ which is no longer than $P$, since \eqref{EquationDiameterExhaustion} will then follow immediately. 
In fact, if the walk $P$ is \emph{not} contained in $\mGraph_{\mv,n}$, then part of it must lie in one or more of the trees of which $\mGraph\setminus \mGraph_{\mv,n}$ consists. Suppose this tree is attached to $\mGraph_{\mv,n}$ at a single vertex $\mw$; then $P$ must pass through $\mw$ twice. Cut out the part of $P$ beyond $\mw$, glue the two remaining parts of the walk together, and repeat for every connected component of $\mGraph \setminus \mGraph_{\mv,n}$ through which $P$ passes. The new walk $P_n$ lies in $\mGraph_{\mv,n}$, still connects $x$ and $y$, and has shorter length. This proves the claim.

It follows from \eqref{EquationDiameterExhaustion} and the fact that $\mGraph_{\mv,n} \subset \mGraph$ that, for all $n$ for which \eqref{EquationDiameterExhaustion} holds,
\begin{displaymath}
	\diam(\mGraph_{\mv,n}) = \sup_{x,y \in \mGraph_{\mv,n}} \dista_{\mGraph_{\mv,n}} (x,y) = \sup_{x,y \in \mGraph_{\mv,n}} \dista_{\mGraph} (x,y)
	\leq \sup_{x,y \in \mGraph} \dista_{\mGraph} (x,y) = \diam(\mGraph).
\end{displaymath}
Combining this with the result of Proposition~\ref{PropDiameterBelow} yields the conclusion.
\end{proof}

\subsection{Lebesgue and Sobolev spaces}
	\label{subsec:Sobolev_spaces}

	We next introduce function spaces on a metric graph $\Graph$,  for which embedding theorems will be discussed in Section~\ref{sec:embedding}.
	
	Any metric graph $\Graph$ has both a topological and a measure theoretical structure.
	This immediately defines the space of continuous functions and Lebesgue spaces; we refer to \cite{Mug19} and \cite[Sections~2.3 and 3.1]{KosMugNic22} for the details of these function spaces.

\begin{defi}\label{DefinitionL2}
Define
\begin{displaymath}
	L^2 (\mGraph) := \left\{ f \in \bigoplus_{\me \in \mE} L^2(\me) : \sum_{\me \in \mE} \|f_\me\|_{L^2(\me)}^2 < \infty \right\},
\end{displaymath}
equipped with the canonical inner product and norm, and define
\begin{displaymath}
	L_c^2(\mGraph) := \left\{ f \in L^2(\mGraph) : f(x) = 0 \text{ almost everywhere outside a compact subset of } \mGraph \right\}.
\end{displaymath}
\end{defi}

The other $L^p$-spaces, $p \in [1,\infty]$, are defined analogously.
In view of Remark~\ref{rem:redu}, any function in $L_c^2(\mGraph)$ is identically zero outside of a finite set of edges of $\mGraph$.

\begin{defi}\label{DefinitionC}
Denote by $C(\mGraph)$ the set of all functions $f: \mGraph \to \R$ which are continuous with respect to the canonical metric on $\mGraph$, and by
\begin{displaymath}
	C_c(\mGraph) := \left\{ f \in C(\mGraph): f(x) = 0 \text{ outside a compact subset of } \mGraph \right\}
\end{displaymath}
the set of continuous functions of compact support.
\end{defi}

Again, $C_c(\Graph)$-functions are necessarily supported on a finite set of edges, and, if $\mGraph$ has any ends, then $C_c(\Graph)$ is not a closed space.
Denoting by $\overline{\mGraph}$ the metric space which is the union of $\mGraph$ with its ends as in Section~\ref{subsec:notation}, we also write $C(\overline{\mGraph})$ for the complete space of all continuous functions on $\overline{\mGraph}$.
This is canonically identified with the space of continuous functions on $\mGraph$ that can be continuously extended to the ends of the graph.

\begin{defi}\label{H1def}
Denote by $H^1(\mGraph)$ the Sobolev space
\begin{displaymath}
	H^1(\mGraph) := \left\{ f \in C(\mGraph): f_\me \in H^1(\me) \text{ for all $\me \in \mE$, and } \sum_{\me \in \mE} \|f_\me\|_{H^1(\me)}^2 < \infty \right\},
\end{displaymath}
equipped with the canonical inner product and norm. Here, the space $H^1(\me)$ consists exactly of the absolutely continuous functions on $\me$ whose distributional derivative is in $L^2(\me)$.

Replacing $L^2$-spaces by $L^p$-spaces in the definition, we also obtain the Sobolev spaces $W^{1,p} (\mGraph)$ for $p\geq 1$; 
in particular, $W^{1,2} (\Graph) = H^1 (\Graph)$.
It is not hard to show that, for any $f \in H^1(\mGraph)$, there is a canonical extension of $f$ to the ends of $\mGraph$ (see \cite[Definition~3.4]{KosMugNic22}). 
In particular, if $\overline{\Graph}$ is compact (see Corollary~\ref{cor:all_graphs_end_finite_volume} for a sufficient condition), then by~\cite[Lemma~3.2]{KosMugNic22}
 $f \in H^1(\mGraph)$ can be identified with a function on $\overline{\mGraph}$, and up to a canonical identification, we have $H^1 (\mGraph) \subset C(\overline{\mGraph})$.

We also define the (not necessarily closed) subspace of $H^1(\Graph)$ of $H^1$-functions of compact support,
\begin{equation}\label{H1c}
	H^1_c(\mGraph) := \left\{ f \in H^1(\mGraph) : f(x) = 0 \text{ outside a compact subset of } \mGraph \right\}
\end{equation}
and its completion in $H^1$,
\begin{equation}\label{H10-original}
	H^1_0 (\mGraph) := \overline{H^1_c (\mGraph)}^{\|\cdot\|_{H^1(\mGraph)}}.
\end{equation}
\end{defi}
The issue whether $H^1_0(\Graph)=H^1(\Graph)$ is intimately related to the essential self-adjointness of the Laplacian $\Delta_{{\Graph}|_{L^2_c}}$.
This will be briefly discussed in Section~\ref{subsec:laplacian-inf}, and is addressed in more detail in~\cite{KosMugNic22}.

\begin{rem}\label{Dirichletknoten}
\begin{enumerate}
	\item
	It is easy to see (and well known in the case of finite metric graphs) that, up to isometric isomorphism, the spaces $H^1(\Graph)$ and $L^2(\Graph)$ are not modified upon inserting or deleting dummy vertices.
	\item
	Similarly to (1), we observe that formally modifying $\Graph$ by identifying two or more Dirichlet vertices does not affect the spaces $H^1_0(\Graph)$ or $L^2(\Graph)$.
	\item
	One has
	\begin{displaymath}
		H^1_0(\mGraph) = \{ f \in H^1(\mGraph) : f(\gamma) =0 \text{ for all } \gamma \in \mathfrak{C}(\mGraph) \},
	\end{displaymath}
	that is, $H^1_0 (\mGraph)$-functions satisfy a Dirichlet (zero) condition at every end, see~\cite[Theorem 3.12]{KosMugNic22}. 
	\item
	Correspondingly, it will also be of interest to consider Sobolev spaces associated with $\mGraph$ where we impose Dirichlet conditions on a subset of $\mathfrak{C}(\mGraph)\cup\mV$; for some subset $\Dir \subset\mathfrak{C}(\mGraph)\cup\mV$ we define the space
		\[
			H^1_0(\mGraph;\Dir) :=\{f \in H^1(\mGraph) : f(\mv) =0 \text{ for all } \mv \in \Dir \}.
		\]
	We regard this as imposing a Dirichlet condition at each vertex and each end belonging to $\Dir$.
	\item
	A unified way to prescribe \emph{Dirichlet conditions at vertices}, is as follows: remove $\mv$ and appropriately add infinitely many dummy vertices on all edges incident to it.
	This way, $ \mv $ is replaced by $\deg(\mv)$ many ends $ \gamma $, and every $f \in H^1_0(\mGraph)$ must vanish at $\mv$. 
\end{enumerate}
\end{rem}

\subsection{Laplace-type operators on infinite graphs}\label{subsec:laplacian-inf}

Following~\cite{KosMugNic22}, we recall two natural realisations  $\mathcal{H}_{\mathrm{N}}$ and  $\mathcal{H}_{\mathrm{F}}$ of the Laplacian on a locally finite graph $\mGraph$.
 We start by defining the maximal Laplacian, which does not see the topological structure of the graph.

\begin{defi}\label{DefinitionLaplace}

The maximal Laplacian is defined by

\[ \mathcal{H}_{\max} = \bigoplus_{\me \in \mE} 
- \frac{\ud^2}{\ud x_\me^2}
, \qquad \dom(\mathcal{H}_{\max}) = \bigoplus_{\me \in \mE} H^2(\me). \]
\end{defi}

We will always restrict this operator to spaces satisfying standard vertex conditions: firstly, any function $f$ in the operator domain should be in $C(\mGraph)$; secondly, it should satisfy the Kirchhoff condition, where the sum of the normal derivatives of $f$ at every vertex should be zero:
 \[
\sum_{\me \in \mE_{\mv}} \frac{\partial}{\partial \nu_\me} f(\mv) = 0 \qquad \text{for all }\mv \in \mV. \]
Imposing only these two conditions leads to the \emph{maximal Kirchhoff Laplacian} on $ \mGraph $: 
this is the restriction of $\mathcal{H}_{\max}$ to the domain
\[\dom(\mathcal{H}) = \left\{ f \in \dom(\mathcal{H}_{\max}) \cap L^2(\mGraph): f \in C(\mGraph) \text{ and }\sum_{\me \in \mE_\mv} \frac{\partial}{\partial \nu_\me} f(\mv) = 0
\text{ for all } \mv \in \mV \right\}.\]

In order to construct self-adjoint restrictions of the maximal Kirchhoff Laplacian, it is natural to resort to quadratic forms:
\begin{align}
\label{NeumannForm} \form_{\mathrm{N}}[f] &:= \int_{\mGraph} | f'(x) |^2 \ud x,  &\dom[t_{\mathrm{N}}] := H^1(\mGraph), \\
\label{FriedrichsForm} \form_{\mathrm{F}}[f] &:= \int_{\mGraph} | f'(x) |^2 \ud x,  &\dom[t_{\mathrm{F}}] := H^1_0(\mGraph),
\end{align}
from which the Kirchhoff-type conditions at the vertices arise naturally.

\begin{defi}\label{def:neumann-friedrichs}
	The \emph{Neumann realisation} $\mathcal{H}_{\mathrm{N}}$ is the self-adjoint operator on $L^2(\mGraph)$ associated with $\form_{\mathrm{N}}$, 
	and the \emph{Friedrichs realisation} $\mathcal{H}_{\mathrm{F}}$ is the self-adjoint operator on $L^2(\mGraph)$ associated with $\form_{\mathrm{F}}$.
\end{defi}

It can be shown, see~\cite[Corollary~6.7]{KosMugNic22} and also Remark~\ref{Dirichletknoten}.(3), that on metric graphs with a finite number of ends, each of which of finite volume, $\mathcal{H}_{\mathrm{N}}$ corresponds to imposing Neumann boundary conditions at all ends while the Friedrichs realisation $\mathcal{H}_{\mathrm{F}}$ imposes Dirichlet boundary conditions at all ends.

\begin{rem}\label{rem:mixed-Neumann-Friedrich}
We can define
mixed versions of the Neumann and Friedrichs realisations: given $\Dir \subset \mV \cup \mathfrak C(\Graph)$, take the quadratic form given by
\begin{align*}
\form_\Dir[f] &:= \int_{\mGraph} | f'(x) |^2 \ud x, & \dom[\form_\Dir] : =H^1_0(\mGraph;\Dir).
\end{align*}
We introduced $H^1_0(\mGraph;\Dir)$ in Remark~\ref{Dirichletknoten}.(4): since it
 is a closed subspace of $H^1(\mGraph)$ that is dense in $L^2(\mGraph)$, the form $\form_\Dir$ induces a self-adjoint operator $\mathcal H_\Dir$ in $L^2(\mGraph)$. Roughly speaking, for all functions in the domain of  $\mathcal H_\Dir$ we are imposing Dirichlet-type conditions at \textit{some} ends and/or at \textit{some} vertices, along with standard conditions at all other vertices and Neumann conditions at all other ends. We will sometimes sloppily refer to $\mathcal H_\Dir$ as the \emph{Laplacian with mixed Dirichlet/Neumann conditions}.
\end{rem}

\section{Embedding theorems and discreteness of the spectrum}
\label{sec:embedding}

\subsection{Embedding theorems for Sobolev spaces}
	\label{subsec:Sobolev_embedding}

This section is about criteria for discreteness of the spectrum of the operator $\mathcal{H}_{\mathrm{N}}$ and all self-adjoint Laplacian realisations dominating $\mathcal{H}_{\mathrm{N}}$ in the sense of forms, including $\mathcal{H}_{\mathrm{F}}$. 
Since discreteness of the spectrum of self-adjoint realisations is equivalent to compactness of the embedding of their form domains in $L^2(\mGraph)$ this subsection can also be understood as discussing Sobolev embedding theorems.
For (locally finite) infinite metric graphs with compact metric completion it 
has been known since \cite[Theorem~2.2]{Car00} that
$H^1(\mGraph)$ embeds compactly into $C_b(\overline{\mGraph})$, the space of bounded continuous functions over the metric completion of $\mGraph$. We stress that, however, this does not necessarily imply a compact embedding into $L^2(\Graph)$, unless $\overline{\mGraph}$ has finite volume (for a counterexample, see the diagonal comb graph in \ref{thm:comb}.(2)).

In particular, the resolvent of $\mathcal H_{\mathrm{N}}$ and hence -- by pointwise domination of the integral kernels -- also of  $\mathcal H_{\mathrm{F}}$ or any Markovian realisation -- will be trace class operators, see~\cite[Theorem~5.1]{KosMugNic22}. However, this finite  volume criterion is insufficient to \emph{characterise} discreteness of the spectrum: there are metric graphs of infinite  volume on which $\mathcal{H}_{\mathrm{F}}$ and even $\mathcal{H}_{\mathrm{N}}$ have purely discrete spectrum: Non-trivial examples are given in~\cite[Theorem~1.3.5]{AliNic93} and \cite[Theorem~4.1]{Sol04}.

This is, more generally, remedied by the next criterion which also constitutes our first main result.
It is inspired by~\cite{HanHol10}, and can be understood as a Kolmogorov--Riesz-type result characterising form domains which are compactly embedded in $ L^2(\Graph) $.

\begin{theo}\label{theo:duefel}
Let $\Graph $ be a locally finite, connected metric graph, and let $K$ be a closed subspace of $ H^1(\Graph)$.
Then the embedding of $K $ into $L^2(\Graph)$ is compact if and only if
for all $\varepsilon>0$ there is a finite subgraph $\Graph_c$ of $\Graph$ such that
\begin{equation}
	\label{eq:criterium_compactness}
	\|f\|_{L^2\left((\Graph_c)^\complement \right)}
	\le \varepsilon 
	\quad 
	\text{for all $ f \in K$ with} 
	\quad 
	\lVert f \rVert_{H^1(\Graph)} \leq 1,
\end{equation}
where $(\Graph_c)^\complement$ denotes the complement of $\Graph_c$ within $\Graph$.
\end{theo}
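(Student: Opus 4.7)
The plan is to establish both implications directly from Theorem~\ref{TheoremEmbedding}: for one direction combining it with a standard diagonal extraction, and for the other using a contradiction argument anchored in a compact exhaustion.

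For the sufficiency $(\Leftarrow)$, take any sequence $(f_n) \subset K$ with $\|f_n\|_{H^1(\Graph)} \le 1$. For every $k \in \N$, the hypothesis with $\varepsilon = 1/k$ supplies a finite subgraph $\Graph_{c,k}$ such that $\|f_n\|_{L^2(\Graph_{c,k}^\complement)} \le 1/k$ for all $n$. The restrictions $f_n|_{\Graph_{c,k}}$ lie in the unit ball of $H^1(\Graph_{c,k})$, and, since $\Graph_{c,k}$ has finite total length, Theorem~\ref{TheoremEmbedding} yields a subsequence that converges in $L^2(\Graph_{c,k})$. A diagonal extraction then produces a single subsequence $(f_{n_j})$ converging in $L^2(\Graph_{c,k})$ for every $k \in \N$. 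The decomposition
\[
\|f_{n_j} - f_{n_{j'}}\|_{L^2(\Graph)} \le \|f_{n_j} - f_{n_{j'}}\|_{L^2(\Graph_{c,k})} + \|f_{n_j}\|_{L^2(\Graph_{c,k}^\complement)} + \|f_{n_{j'}}\|_{L^2(\Graph_{c,k}^\complement)},
\]
together with the tail bound $2/k$, then shows $(f_{n_j})$ is Cauchy in $L^2(\Graph)$, yielding the desired convergence.

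For the necessity $(\Rightarrow)$, I would argue by contradiction. Fix a compact exhaustion $(\Graph_n)$ of $\Graph$, for instance the one from Example~\ref{ExampleApprox}. If \eqref{eq:criterium_compactness} fails for some $\varepsilon_0 > 0$, then since every finite subgraph of $\Graph$ is contained in some $\Graph_n$ (so the tails on $\Graph_n^\complement$ are dominated by those on the given finite subgraph's complement), for every $n$ one can find $f_n \in K$ with $\|f_n\|_{H^1(\Graph)} \le 1$ and $\|f_n\|_{L^2(\Graph_n^\complement)} > \varepsilon_0$. By the assumed compactness of $K \hookrightarrow L^2(\Graph)$, a subsequence $(f_{n_j})$ converges in $L^2(\Graph)$ to some $f$. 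Since each edge of $\Graph$ is eventually contained in $\Graph_n$ by property~(2) of Definition~\ref{DefinitionApprox}, the indicators $\mathbf{1}_{\Graph_n^\complement}$ tend to $0$ almost everywhere on $\Graph$, so dominated convergence gives $\|f\|_{L^2(\Graph_{n_j}^\complement)} \to 0$; combined with $\|f_{n_j} - f\|_{L^2(\Graph)} \to 0$, this contradicts $\|f_{n_j}\|_{L^2(\Graph_{n_j}^\complement)} > \varepsilon_0$.

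The only subtlety worth flagging — and the closest thing to an obstacle — is the observation that the restriction of an $H^1(\Graph)$-function to an induced subgraph $\Graph_c$ lies in $H^1(\Graph_c)$ with norm no larger than on $\Graph$, so that Theorem~\ref{TheoremEmbedding} genuinely applies to the restricted sequence. This is immediate from the edgewise definition of $H^1$ together with continuity at vertices, but without it the sufficiency direction would not follow. Beyond this, both implications are short and do not require any new machinery.
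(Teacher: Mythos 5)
Your proof is correct, but it takes a genuinely different route from the paper's. For the sufficiency direction the paper does \emph{not} invoke Theorem~\ref{TheoremEmbedding}; instead it runs a self-contained Kolmogorov--Riesz argument in the spirit of \cite{HanHol10}: the finite subgraph is doubled edge-wise so that it carries an Eulerian walk, the restricted function is identified with a function on an interval $[0,p]$, and an explicit finite-rank averaging projection onto piecewise constants is shown to approximate the unit ball uniformly, whence total boundedness follows from \cite[Lemma 1]{HanHol10}. You instead use Carlson's compact embedding on finite-length graphs (Theorem~\ref{TheoremEmbedding}) as a black box on each $\Graph_{c,k}$ and diagonalise; this is shorter and cleaner, and is legitimate because Theorem~\ref{TheoremEmbedding} has an independent proof in \cite{Car00} -- though note that the paper remarks immediately after the statement that Theorem~\ref{theo:duefel} \emph{implies} Theorem~\ref{TheoremEmbedding}, so the paper's choice to avoid it keeps that implication non-circular as a derivation, whereas yours sacrifices it. For the necessity direction the paper argues directly from total boundedness: a finite $\varepsilon$-net $g_1,\dots,g_n$, a single finite subgraph outside which every $g_i$ is small, and the triangle inequality; your contradiction argument via a compact exhaustion and a convergent subsequence is an equally valid variant. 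The one subtlety you flag -- that restriction to a finite subgraph lands in $H^1$ of that subgraph with no larger norm -- is indeed the point that needs saying, and is correct (the identifications of the subgraph are a subset of those of $\Graph$, so continuity is inherited, and the edge-wise norms only decrease); you should also note that $\Graph_{c,k}$ may be enlarged to a connected finite subgraph, or Theorem~\ref{TheoremEmbedding} applied component-wise, since the cited theorem is stated for connected graphs.
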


Clearly, Theorem~\ref{theo:duefel} contains the existing criterion of finite  volume, since in this case, $H^1(\Graph) \hookrightarrow L^\infty(\Graph)$, and thus $\|f\|_{L^2((\Graph_c)^\complement)} \leq |(\Graph_c)^\complement|^{1/2} \|f\|_{L^\infty ((\Graph_c)^\complement )}$ can be made arbitrarily small independently of $f$ by choosing $|(\Graph_c)^\complement|$ is small enough.
The usefulness of Theorem~\ref{theo:duefel} will be demonstrated in Theorem~\ref{thm:comb}.

\begin{proof}
$(\Longleftarrow)$
Let $\varepsilon>0$, take a finite subgraph $\Graph_c \subset \Graph$ as in the theorem, and denote by $ B $ the unit ball within $K$.
Construct a new graph $\Graph^*$ from $\Graph$ by doubling all edges in $\Graph_c$, that is, replacing each $\me$ in $\Graph_c$ with a pair $(\me', \me'')$ of identical edges, and keeping edges in $(\Graph_c)^\complement$.
Let $ B^* $ be the set of functions $ f^* \in H^1(\Graph^*) $ such that there exists $f \in B$ with 
	\[
	\label{eq:criterion_compactness}
	f^*_{\vert \me'} = f^*_{\vert \me''} = f_{\vert \me} 
	\quad
	\text{for all $\me \subset \Graph_c$, and}
	\quad
	f^*_{(\Graph_c)^\complement}
	=
	f_{(\Graph_c)^\complement}.
	\]
The subgraph $ \Graph_c^* :=\mGraph^*\setminus (\Graph_c)^\complement$ can be identified with an Eulerian walk
of finite length $ p $ which allows to identify $ f^*_{\vert \Graph_c^* } \in B^* $ with a function $ \hat{f} \in H^1 ([0,p]) $. 
On $\Graph_c^* $ we can then define 
	\[  
	f^*(x+h) =
	\begin{cases} 
		\hat{f}(x+h) & 
		\quad 
		\text{if}\ x \in \Graph_c^*,\ \text{and}\ x + h \in [0,p], \\
		0 & 
		\quad \text{if}\ x \in \Graph_c^*,\ \text{and}\ x + h \notin [0,p]. 
	\end{cases} 
	\]
By uniform continuity of $ \hat{f} $ on $ [0,p] $ there is $ \delta_0 >0 $ so that for each $\delta \leq \delta_0$ and $ h < \delta$ we have 
	\begin{equation}
	\label{eq:consequence_of_uniform_continuity}
		\int_{\Graph_c^*} \lvert f^*(x+h) - f^*(x) \rvert^2 \mathrm{d} x < \varepsilon^2. 
	\end{equation}
We cover $ [0, p ] $ with $N$ non-overlapping intervals of length $\delta$,
	\[
	[0, p]
	=
	\bigcup_{j = 1}^N \overline{I_j},
	\quad
	\text{where}
	\quad
	I_j \cap I_k = \emptyset,
	\quad
	\text{for $k \neq j$}, 
	\quad
	\text{and}
	\quad
	\lvert I_j \rvert = \delta 
	\quad
	\text{for all $j \in \{1, \dots, N \}$}.
	\]
This allows us to define an orthogonal projection $P \colon K \to L^2(\Graph^*)$
	\[
	P(f^*)(x) 
	:= 
	\begin{cases}
		\frac{1}{\delta} 
		\int_{I_j} f^*(x) \mathrm{d} x 
		\quad
		&\text{if $x \in \Graph_c^*$, and $x \in I_j$ on the Eulerian walk},\\
		0
		\quad
		&
		\text{if $x \in (\Graph_c)^\complement$}.
	\end{cases}
	\]

We have
\begin{align}
	\label{eq:split_subgraph_and_rest}
	\lVert f^* - Pf^* \rVert_{L^2({\Graph^*})}^2 
	&= 	
	\int_{\Graph_c^\complement} 
	\lvert f^* - Pf^* \rvert^2 
	+ 	
	\int_{\Graph^*_c} \lvert f^* - Pf^* \rvert^2 ,
\end{align}
and note that the first summand in~\eqref{eq:split_subgraph_and_rest} is at most $ \varepsilon^2 $ by assumption. 
The second term in~\eqref{eq:split_subgraph_and_rest}  can be considered as an integral along the Eulerian walk and estimated using Jensen's inequality by
	\begin{align*}
	\sum_{j=1}^N \int_{I_j} 
	\left \lvert 
		\frac{1}{\delta} 
		\int_{I_j} \left(f^*(x) - f^*(y)\right) \mathrm{d}y 
	\right \rvert^2 \mathrm{d}x
	\leq
	\sum_{j=1}^N 
	\int_{I_j}
	\frac{1}{\delta} 
	\int_{I_j} 
	\left \lvert 
		f^*(y) - f^*(x) 
	\right \rvert^2 
	\mathrm{d}y\
	\mathrm{d}x .
	\end{align*}
We substitute $h = y-x $ and emphasise that $ h \in (-\delta, \delta)$. 
Thus,~\eqref{eq:consequence_of_uniform_continuity} implies
\begin{align*}
	\lVert f^* - Pf^* \rVert_{L^2(\mathcal{G^*})}^2 
	&\leq 
	\varepsilon^2  
	+ 
	\sum_{j=1}^N 
	\int_{I_j}
	\frac{1}{\delta} 
	\int_{-\delta}^\delta 
	\left \lvert 
		f^*(x+h) - f^*(x) 
	\right \rvert^2 
	\mathrm{d}h\
	\mathrm{d}x 
	\\
	&=
	\varepsilon^2 
	+ 
	\frac{1}{\delta} 
	\int_{-\delta}^\delta
	\int_{\Graph_c^*}
	\lvert 
		f^*(x+h) - f^*(x) 
	\rvert^2 
	\mathrm{d}h\
	\mathrm{d}x  
	\leq 3 \varepsilon^2.
\end{align*}
Using the triangle inequality,
	\[
	\lVert 
	f^* 
	\rVert_{L^2(\mathcal{G^*})} 
	\leq 
	\sqrt{3}\varepsilon + \lVert Pf^* \rVert_{L^2(\mathcal{G^*})}. 
	\]
Now, if $ f^*,g^* \in B^* $ with $ \lVert Pf^* - Pg^* \rVert_{L^2(\mathcal{G^*})} < \varepsilon $, then we have 
	\[
	\lVert 
		f^* - g^* 
	\rVert_{L^2(\mathcal{G^*})} 
	< 
	(\sqrt{3}+1)\varepsilon. 
	\]
Furthermore, $ P $ is a bounded operator, and $ B^* $ is bounded, thus $ P(B^*) $ itself is bounded. Finally, since $P$ is of finite rank, it is totally bounded. 
By~\cite[Lemma 1]{HanHol10}, we find that $ B^* $ is totally bounded in $ L^2(\Graph^*) $, and thus the same is true of $ B $ in $ L^2(\Graph) $.
\\
($\Longrightarrow$) Conversely, assume that $K$ is compactly embedded in $ L^2(\Graph) $. 
Then the unit ball $ B \subset K $ is totally bounded in $ L^2(\Graph) $, and for every $ \varepsilon > 0 $ there is a finite $ \varepsilon$-cover $ \{ U_1, \ldots U_n \} $ with central points $ g_1, \dots, g_n \in B $. 
For each $ g_i $ there is a finite subgraph $ \Graph_i $ such that $\lVert g_i \rVert_{L^2((\Graph_i)^\complement)} < \varepsilon$.
	Thus, there is a finite and connected subgraph $ \Graph_c $ that fulfils the conditions for all $ g_i $ with $ 1 \leq i \leq n $. 
	Now let $ f \in B $. 
	There is $ g_i $ with $ \lVert f - g_i \rVert_{L^2(\Graph) } < \varepsilon $, whence by the triangle inequality
	\begin{align*}
		\lVert f \rVert_{L^2((\Graph_c)^\complement)} 
		&\leq 
		\lVert 
		f - g_i 
		\rVert_{L^2((\Graph_c)^\complement)}
		+
		\lVert 
		g_i 
		\rVert_{L^2((\Graph_c)^\complement)}
		<
		2 \varepsilon
		.
		\qedhere
	\end{align*}

\end{proof}

Next, we obtain a criterion on compactness of embeddings of Sobolev spaces on \emph{trees}.
It is a generalisation of~\cite[Lemma~8.1]{KosNic19} where infinite trees \textit{without degree one vertices} are considered. 
We also mention that the metric condition in Proposition~\ref{prop:length_epsilon_compact} is already known to play a role for spectral properties of infinite quantum graphs with $\delta$-couplings, see~\cite[Theorem 3.5]{ExnKosMal18}.

\begin{prop}
	\label{prop:length_epsilon_compact}
	Let $\Graph$ be a locally finite metric tree and 
	\[
	\Dir:=\mathfrak{C}(\Graph)\cup\{\mv\in\mV : \deg(\mv)=1\}
	.
	\]
	If for all $\varepsilon > 0$ there are only finitely many edges of length larger than $\varepsilon$, then the embedding of $H_0^1(\Graph, \Dir)$ into $L^2(\Graph)$ is compact and $\mathcal{H}_\Dir$ has purely discrete spectrum.
\end{prop}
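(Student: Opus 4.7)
The plan is to deduce the result from Theorem~\ref{theo:duefel} applied to the closed subspace $K = H^1_0(\Graph;\Dir) \subset H^1(\Graph)$: that criterion reduces compactness of the embedding $H^1_0(\Graph;\Dir) \hookrightarrow L^2(\Graph)$ to finding, for every $\varepsilon > 0$, a finite subgraph $\Graph_c$ such that every $H^1$-unit-ball element of $H^1_0(\Graph;\Dir)$ has $L^2$-mass at most $\varepsilon$ outside $\Graph_c$. Once the embedding is compact, the form-domain of $\form_\Dir$ embeds compactly in $L^2(\Graph)$ and hence $\mathcal{H}_\Dir$ has purely discrete spectrum.

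Given $\varepsilon > 0$, I would fix a small $\delta > 0$ (to be tuned later) and take $\Graph_c$ to be the smallest connected induced subgraph of $\Graph$ containing every edge of length greater than $\delta$, together with a fixed reference vertex $\mv^* \in \mV$. By the hypothesis only finitely many edges have length greater than $\delta$, and since $\Graph$ is a tree only finitely many additional short edges are required to connect them, so $\Graph_c$ is finite. The complement $\Graph \setminus \Graph_c$ then decomposes into finitely many subtrees $T_1, \ldots, T_M$, each attached to $\Graph_c$ at a single vertex $\mw_k$, all of whose edges have length at most $\delta$ and all of whose leaves other than $\mw_k$, as well as all of whose ends, lie in $\Dir$.

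The crux is to establish, on each such rooted subtree $T_k$, a Poincar\'e-type inequality
\begin{equation}\label{eq:goal-poincare}
\|f\|_{L^2(T_k)}^2 \,\leq\, C(\delta)\,\|f\|_{H^1(T_k)}^2,
\end{equation}
with $C(\delta) \to 0$ as $\delta \to 0$, uniformly in $k$ and in $f \in H^1_0(\Graph;\Dir)$. For edges $\me \subset T_k$ with a Dirichlet endpoint, an elementary one-sided Poincar\'e inequality already gives $\|f\|_{L^2(\me)}^2 \leq \tfrac{\delta^2}{2}\|f'\|_{L^2(\me)}^2$; for the remaining interior edges, one propagates Dirichlet control from leaves and ends inwards, iterating generation by generation in the rooted tree a telescoping bound of the shape $|f(\mv)|^2 \lesssim \delta \cdot \|f'\|_{L^2(\text{subtree below }\mv)}^2$ along the unique downward paths in $T_k$ and applying Cauchy--Schwarz across disjoint descending branches. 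Summing~\eqref{eq:goal-poincare} over $k = 1, \ldots, M$, using $\sum_k \|f\|_{H^1(T_k)}^2 \leq \|f\|_{H^1(\Graph)}^2 \leq 1$, and choosing $\delta$ so small that $M \cdot C(\delta) \leq \varepsilon^2$, then verifies the tail hypothesis of Theorem~\ref{theo:duefel}.

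The main obstacle is precisely~\eqref{eq:goal-poincare}: the subtrees $T_k$ may have infinite diameter and infinite total length, so the continuous embedding into $L^\infty \cap C(\overline{\Graph})$ from Lemma~\ref{Lemma32} is of no use, and edges without a Dirichlet endpoint admit no direct one-sided Poincar\'e bound. The delicate point is to exploit the tree structure, namely uniqueness of paths and disjointness of branches, so that the vertex-wise contributions to $\|f\|_{L^2(T_k)}^2$ telescope into a quantity controlled purely by $\delta^\alpha \|f'\|_{L^2(T_k)}^2$ for some $\alpha > 0$; this is what forces $C(\delta) \to 0$ and ultimately closes the argument.
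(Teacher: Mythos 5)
Your reduction to Theorem~\ref{theo:duefel} is sound in outline, and the combinatorics are fine (in a tree the convex hull of the finitely many long edges is a finite subgraph, and its complement splits into finitely many subtrees each attached at one vertex). The genuine gap is your claimed Poincar\'e-type inequality $\|f\|_{L^2(T_k)}^2 \le C(\delta)\,\|f\|_{H^1(T_k)}^2$ with $C(\delta)\to 0$, together with the proposed mechanism for proving it. The mechanism --- bounding $|f(\mv)|^2$ by integrating $f'$ along a path from $\mv$ down to a point of $\Dir$ --- requires every point of $T_k$ to lie within distance $o(1)$ of the Dirichlet set. This is simply not available under the hypotheses: the paper's own remark after the proposition points to the infinite regular tree with edge length $n^{-1}$ in the $n$-th generation, where \emph{every} point is at \emph{infinite} distance from every end (since $\sum n^{-1}=\infty$) and there are no leaves at all, so no telescoping from $\Dir$ can produce a bound of the form $|f(\mv)|^2\lesssim \delta^{\alpha}\|f'\|^2$. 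Compactness in that example is driven by the exponential branching of the tree (a Cheeger/weighted-Hardy mechanism), which your argument does not see. Worse, if a component $T_k$ is (after subdivision) essentially a single long path of short edges converging to an end of infinite volume, then every $H^1$ function automatically vanishes at that end, the Dirichlet condition carries no information, and the inequality fails outright (translated bumps have bounded Rayleigh quotient); so the estimate cannot be established by purely local considerations on the $T_k$, and any correct proof must invoke the global branching structure of the tree.

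The paper takes an entirely different and much shorter route: it quotes the known result of Kostenko--Nicolussi for infinite trees \emph{without} degree-one vertices (whose proof is precisely the Cheeger-type argument your sketch is missing), and then handles leaves by a surgery trick --- attach to the $j$-th leaf a copy of a fixed leafless tree scaled by $j^{-1}$, observe that the enlarged tree $\Graph^+$ still satisfies the hypothesis and is leafless, and use the inclusion $H^1_0(\Graph,\Dir)\subset H^1_0(\Graph^+)$ to get $\lambda_k(\Graph,\Dir)\ge\lambda_k(\Graph^+)\to\infty$. If you want to keep your self-contained approach via Theorem~\ref{theo:duefel}, you would have to replace the ``short path to a Dirichlet point'' step by an averaged estimate over the (exponentially many) downward branches, i.e.\ essentially reprove the isoperimetric/Hardy inequality on trees; as written, the key step is not just unproven but false.
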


We stress that $\Graph$ has not been assumed to have finite diameter.

\begin{proof}
On infinite trees without degree one vertices (``leaves''), the statement follows from 
\cite[Lemma~8.1]{KosNic19}

It remains to prove the statement in the presence of leaves.
	
	Compactness of the embedding is equivalent to 
	\[
	\lim_{k \to \infty} \lambda_k(\Graph, \Dir) = \infty,
	\quad
	\text{where}
	\quad
	\lambda_k(\Graph, \Dir) 
	=
	\inf_{\substack{X \subset H_0^1(\Graph, \Dir) \\ \operatorname{dim} X = k}}
	\sup_{\substack{\phi \in X \\ \lVert \phi \rVert_{L^2(\Graph)} = 1}}
	\lVert \phi' \rVert_{L^2(\Graph)}.
	\]
	Enumerate the at most countably many leaves by $\mv_j$ and turn the tree $\Graph$ into a leafless tree $\Graph^+$ by taking copies of a leafless tree of finite diameter, scaling it by $j^{-1}$, and attaching it to $\mv_j$.
	Now, $\Graph^+$ is a leafless tree satisfying the conditions of the proposition. 
	Furthermore, since $\Graph \subset \Graph^+$, we have $H_0^1(\Graph, \Dir) \subset H^1_0(\Graph^+)$, which implies 
	\[
	\infty = \lim_{k \to \infty} \lambda_k(\Graph^+) \leq \lim_{k \to \infty} \lambda_k(\Graph, \Dir).
	\qedhere
	\]
\end{proof}

\subsection{Spectral phase transition on graphs of infinite volume}\label{H10yesH1not}

This subsection discusses \textit{diagonal combs}, a class of examples which serve two purposes:
On the one hand, they illustrate that on infinite metric graphs even the type of spectrum can depend in a subtle way on the boundary conditions. 
On the other hand, the diagonal combs underline the usefulness of Theorem~\ref{theo:duefel} by providing a parameter-depending family of graphs $(\mGraph_\alpha)_{\alpha > 0}$ on which the spectrum of $\mathcal{H}_{\mathrm{N}}$ exhibits a phase transition from purely discrete to non-empty essential spectrum, which surprisingly occurs among graphs of infinite volume.
We comment on this phenomenon and compare it to the Laplacian on so-called horn-shaped domains in Remark~\ref{rem:phase_transition} below.

Furthermore, we identify the driving mechanism behind this phase transition (the growth rate of volumes of annuli around a graph end) and formulate a corresponding general statement in Theorem~\ref{theo:discrete_spectrum_annuli}.

\begin{defi}\label{def:comb}
For $\alpha > 0$, we define the ``diagonal comb'' metric graph $\Graph_\alpha$ by taking the interval $(0,1]$ (``horizontal shaft''), putting a vertex on every point $\frac{1}{n^\alpha}$, $n \in \mathbb{N}$,  and attaching to it an edge (``tooth'') of length $\frac{1}{n^\alpha}$.
\end{defi}

\begin{figure}
\begin{center}
	\begin{tikzpicture}[scale = 4]
		\draw[thick] (0,0) -- (1,0);
		
		\foreach \x in {1,...,50}
		{
		\draw[thick] ({1/sqrt(\x)},0) -- ({1/sqrt(\x)},{1/sqrt(\x)});
		}
		\draw[fill] (0,0) -- (.15,0) -- (.15,.15) -- (0,0);
		
		\draw(0,-.1) node {$0$};
		\draw(1,-.1) node {$1$};
		
		\begin{scope}[xshift = 1cm]
		  \draw[very thick] (-.03,-.03) -- (.03,.03);
		  \draw[very thick] (-.03,.03) -- (.03,-.03);
		\end{scope}
		\begin{scope}[xshift = 1cm, yshift = 1cm]
		  \draw[very thick] (-.03,-.03) -- (.03,.03);
		  \draw[very thick] (-.03,.03) -- (.03,-.03);
		\end{scope}
		
	\end{tikzpicture}
\caption{The diagonal comb graph $\mGraph_{\frac{1}{2}}$ and its two centre vertices, see Definition \ref{def:centre_vertex} and Remark~\ref{rem:two:centres}.}
\label{fig:no_compact_embedding}
\end{center}
\end{figure}

Figure~\ref{fig:no_compact_embedding} contains an illustration in the case $\alpha=\frac{1}{2}$. 
The diagonal comb $\mGraph_\alpha$ has, for any $\alpha \in (0,1]$, one topological end, diameter equal to 2 and volume $1 + \sum\limits_{n=1}^{\infty} \frac{1}{n^{\alpha}}$.
Note that the larger $\alpha$, the sparser the teeth become.
In particular, the graph has finite volume if (and only if!) $\alpha > 1$ in which case $\mathcal H_{\mathrm{N}}$ (or any Markovian realisation of $\mathcal H$) will have purely discrete spectrum by~\cite[Theorem~2.2]{Car00}.

\begin{theo}
\label{thm:comb}
For the diagonal comb metric graph $\Graph_\alpha$ we have:
\begin{enumerate}
	\item For all $\alpha > \frac{1}{2}$, $H^1 (\Graph_\alpha)$ is compactly embedded in $L^2(\Graph_\alpha)$. In particular, $\mathcal H_{\mathrm{N}}$ has compact resolvent and purely discrete spectrum.
	\item If $\alpha \in (0,\frac{1}{2}]$, then there is an $L^2(\Graph_\alpha)$-orthonormal sequence of $H^1(\Graph_\alpha)$-functions with uniformly bounded $H^1(\Graph_\alpha)$ norm. In particular, $\mathcal H_{\mathrm{N}}$ has non-empty essential spectrum.
	\item For all $\alpha > 0$, if we set $\Dir$ to be the union of $\{0\}$ with the set of all tips of the teeth, then $H_0^1(\Graph_\alpha, \Dir)$ is compactly embedded in $L^2(\Graph_\alpha)$. 
	Consequently, $\mathcal H_\Dir$ has compact resolvent and purely discrete spectrum.
\end{enumerate}
\end{theo}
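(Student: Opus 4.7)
My plan is to treat parts (1) and (3) via the Kolmogorov--Riesz criterion of Theorem~\ref{theo:duefel} by proving uniform decay of the $L^2$-mass outside large compact subgraphs, and to prove (2) by explicitly constructing an $L^2$-orthonormal sequence bounded in $H^1(\Graph_\alpha)$. Throughout, write $x_n := 1/n^\alpha$ and $\ell_n := 1/n^\alpha$ for the $n$-th base vertex and tooth length, and for $N\in\N$ let the complement consist of the shaft piece $(0,x_N)$ together with all teeth of index $n > N$.

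For part (1), the case $\alpha > 1$ is immediate from Theorem~\ref{TheoremEmbedding}, as the total length is then finite. For the delicate range $\alpha \in (1/2, 1]$ the key step is to show that every $f \in H^1(\Graph_\alpha)$ vanishes at the unique end located at $0$. Starting from the pointwise bound $f(x)^2 \geq \tfrac{1}{2}f(x_n)^2 - x \lVert f'\rVert^2_{L^2(\mathrm{tooth}\,n)}$ (applying $(a+b)^2\geq a^2/2 - b^2$ with $a=f(x_n)$, $b=f(x)-f(x_n)$) and integrating over a tooth gives $\ell_n f(x_n)^2 \leq 2\lVert f\rVert^2_{L^2(\mathrm{tooth}\,n)} + \ell_n^2 \lVert f'\rVert^2_{L^2(\mathrm{tooth}\,n)}$; summing over $n$ and using $\max_n \ell_n=1$ yields $\sum_n \ell_n f(x_n)^2 \leq 2\lVert f\rVert^2_{L^2} + \lVert f'\rVert^2_{L^2} < \infty$, while $\sum_n\ell_n = \infty$ for $\alpha \leq 1$ together with the continuous extension $f(x_n)\to f(0)$ (Lemma~\ref{Lemma32}) forces $f(0) = 0$. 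Once $f(0)=0$, Cauchy--Schwarz along the shaft between $0$ and $x_n$ gives $|f(x_n)|^2 \leq x_n \lVert f'\rVert^2_{L^2} = n^{-\alpha}\lVert f'\rVert^2_{L^2}$, and combining with the reverse tooth estimate $\lVert f\rVert^2_{L^2(\mathrm{tooth}\,n)} \leq 2\ell_n f(x_n)^2 + \ell_n^2\lVert f'\rVert^2_{L^2(\mathrm{tooth}\,n)}$ yields
\begin{equation*}
\sum_{n>N}\lVert f\rVert^2_{L^2(\mathrm{tooth}\,n)} \leq 2\sum_{n>N}n^{-2\alpha} + N^{-2\alpha}\lVert f\rVert^2_{H^1} \xrightarrow[N\to\infty]{} 0
\end{equation*}
uniformly over $\{\lVert f\rVert_{H^1} \leq 1\}$ precisely because $2\alpha > 1$; the shaft contribution satisfies $\int_0^{x_N}f^2 \leq \tfrac{1}{2}N^{-2\alpha}\lVert f\rVert^2_{H^1}$ by the same estimate.

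For part (2), I would construct plateau functions tuned to the scaling. For each $M\in\N$, let $\psi_M$ be supported on the shaft segment $[x_{3M}, x_M]$ together with the teeth $k \in [M, 3M]$: take $\psi_M$ piecewise linear on the shaft with $\psi_M(x_M) = \psi_M(x_{3M}) = 0$ and $\psi_M(x_{2M}) = 1$, and on each tooth $k$ set $\psi_M$ constantly equal to its shaft value $\psi_M(x_k)$. The shaft segment has length of order $M^{-\alpha}$, so the slope is of order $M^\alpha$ and $\lVert\psi_M'\rVert^2_{L^2} \sim M^\alpha$; the $L^2$-mass is dominated by the teeth, giving $\sum_{k\sim M}\ell_k \psi_M(x_k)^2 \sim M\cdot M^{-\alpha} = M^{1-\alpha}$. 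After normalising, the resulting $\tilde\psi_M$ satisfies $\lVert\tilde\psi_M'\rVert^2_{L^2} \sim M^{2\alpha - 1}$, which is uniformly bounded (in fact tends to $0$) exactly when $\alpha \leq 1/2$. Choosing $M_j := 4^j$ makes the supports pairwise disjoint because $3\cdot 4^j < 4^{j+1}$, so $(\tilde\psi_{M_j})_{j\in\N}$ is an $L^2$-orthonormal, $H^1$-bounded sequence. This rules out compactness of $H^1(\Graph_\alpha) \hookrightarrow L^2(\Graph_\alpha)$, hence $\mathcal H_{\mathrm N}$ does not have compact resolvent and its spectrum is not purely discrete.

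For part (3), the Dirichlet condition at each tip gives, by integrating $f'$ backwards from the tip, the stronger bound $|f(x)|^2 \leq \ell_n \lVert f'\rVert^2_{L^2(\mathrm{tooth}\,n)}$ for every $x$ on tooth $n$; hence $\lVert f\rVert^2_{L^2(\mathrm{tooth}\,n)} \leq \ell_n^2\lVert f'\rVert^2_{L^2(\mathrm{tooth}\,n)}$ and $\sum_{n>N}\lVert f\rVert^2_{L^2(\mathrm{tooth}\,n)} \leq N^{-2\alpha}\lVert f'\rVert^2_{L^2} \to 0$ uniformly over the unit ball. The Dirichlet condition at $0$ similarly yields $\int_0^{x_N}f^2 \leq \tfrac{1}{2}N^{-2\alpha}\lVert f'\rVert^2_{L^2}$, and applying Theorem~\ref{theo:duefel} to the closed subspace $H^1_0(\Graph_\alpha,\Dir)$ delivers compactness for every $\alpha > 0$. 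The main obstacle lies in part (2), namely identifying the correct ansatz: single-tooth bumps give $\lVert\psi'\rVert^2/\lVert\psi\rVert^2 \sim n^{2\alpha}$ and diverge for every $\alpha > 0$, so one must spread the function across $\sim M$ teeth and absorb the entire $H^1$-cost on the short shaft segment; the critical threshold $\alpha = 1/2$ then emerges from the balance $M^\alpha/M^{1-\alpha} = M^{2\alpha - 1}$.
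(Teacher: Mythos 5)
Your proposal is correct and, for parts (1) and (2), follows essentially the same route as the paper: part (1) verifies the Kolmogorov--Riesz-type criterion of Theorem~\ref{theo:duefel} via the pointwise bound $|f(y)|^2\lesssim \dist(y,0)$ near the end at $0$, and part (2) builds plateau quasi-modes spread over $\sim M$ teeth with disjoint supports and bounded Rayleigh quotient $\sim M^{2\alpha-1}$, which is exactly the paper's tent-function construction (the paper only writes out $\alpha=\tfrac12$ and your version handles all $\alpha\le\tfrac12$ uniformly). Two genuine, and welcome, differences: first, you actually justify the assertion $f(0)=0$ for $\alpha\in(\tfrac12,1]$ via the divergence of $\sum_n \ell_n$ together with the summability of $\sum_n \ell_n f(x_n)^2$, whereas the paper states this without proof (and, as written, its claim would be false for $\alpha>1$, a case you correctly siphon off to Theorem~\ref{TheoremEmbedding}); second, for part (3) you verify the criterion of Theorem~\ref{theo:duefel} directly from the Dirichlet conditions at the tips and at $0$, while the paper simply invokes Proposition~\ref{prop:length_epsilon_compact}. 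Your treatment of (3) is more self-contained but less general; the paper's citation exposes the structural reason (a tree with edge lengths tending to zero) behind the discreteness. All estimates check out, including the disjointness of supports for $M_j=4^j$ and the uniformity over the unit ball needed in \eqref{eq:criterium_compactness}.
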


Before giving the proof of this result, a few observation are due.
Note that the phase transition happens at $\alpha = 1/2$, that is, among graphs of infinite volume (the transition from infinite to finite volume being at $\alpha = 1$); see also Remark~\ref{rem:critical-spectrum}.
Also, all $\mGraph_\alpha$ are trees with only a finite number of edges of length larger than $\varepsilon$ for any $\varepsilon > 0$ -- the criterion for discreteness of the spectrum of the Friedrichs realisation in Proposition~\ref{prop:length_epsilon_compact}.
Thus, Theorem~\ref{thm:comb} also demonstrates that this criterion for discreteness of the Friedrichs realisation on trees does not extend to the Kirchhoff realisation. Note that the operator described in (3) is not technically the Friedrichs realisation of the Laplacian in the sense of Definition~\ref{def:neumann-friedrichs} due to the presence of the additional Dirichlet conditions at the tips of the teeth.

\begin{proof}[Proof of Theorem~\ref{thm:comb}] 
(1) As mentioned, we will use the compactness criterion of Theorem~\ref{theo:duefel}. Fix $\varepsilon > 0$ and let $f \in H^1(\Graph_\alpha)$ with $\|f\|_{H^1(\mGraph_\alpha)} \leq 1$. Writing $x$ for the point on the shaft identified with $[0,1]$, we have $f(0)=0$. Now, by an application of the fundamental theorem of calculus (and a suitable approximation argument) as well as Cauchy--Schwarz,
\begin{equation}
\label{eq:comb-shaft}
	|f(x)| = \left|\int_0^x f'(t)\,\textrm{d}t\right| \leq \sqrt{x} \|f\|_{H^1(\Graph_\alpha)} = \sqrt{x}.
\end{equation}
An analogous calculation yields that at any point $y \in \Graph_\alpha$ such that $\dist (0,y) \leq \delta$ we have $|f(y)| \leq \sqrt{\delta}$.

Denote by $\me_k$ the $k$-th tooth (counted from $1$ to $0$, i.e. from right to left in the orientation of Figure~\ref{fig:no_compact_embedding}), and note that all points in $\me_k$ are at most at distance $2k^{-\alpha}$ to $0$; hence
\begin{displaymath}
	\|f\|_{L^2(\me_k)}^2 \leq |\me_k|\cdot \max_{y \in \me_k}|f(y)|^2 \leq k^{-\alpha} \cdot 2k^{-\alpha} = \frac{2}{k^{2\alpha}}.
\end{displaymath}
Since $\alpha > \frac{1}{2}$, the sum of upper bounds on all teeth converges.
Hence, summing the respective tails of these series, we obtain that, for sufficiently large $k_0 \in \N$,
\begin{displaymath}
	\sum_{k=k_0}^\infty \|f\|_{L^2(\me_k)}^2 \leq \frac{\varepsilon}{2},
\end{displaymath}
for any $f \in H^1(\Graph_\alpha)$ with $\|f\|_{H^1(\mGraph_\alpha)} \leq 1$. Likewise, \eqref{eq:comb-shaft} yields that on the shaft, possibly for a larger choice of $k_0$,
\begin{displaymath}
	\|f\|_{L^2([0,k_0^{-\alpha}])}^2 \leq \frac{\varepsilon}{2}.
\end{displaymath}
Thus, taking $\Graph_c$ to be the subgraph of the comb cut off at the $k_0$-th tooth, we have shown precisely that \eqref{eq:criterium_compactness} holds for this $\varepsilon$. The compactness of the embedding of $H^1(\Graph_\alpha)$ in $L^2(\Graph_\alpha)$ now follows from Theorem~\ref{theo:duefel}.

(2) We will only give a proof for the critical parameter $\alpha = \frac{1}{2}$ since the case $\alpha \in (0, \frac{1}{2})$ follows by a completely analogous argument. Hence, we consider $\Graph := \Graph_{\frac{1}{2}}$ throughout.

	We construct functions $\phi_n$ as follows:
	For $n \in \N$, let $\phi_n \in C_c(\Graph)$ be
	\[
		\begin{cases}
		&\text{linearly rising from $0$ to $1$
		in $[\frac{1}{\sqrt{2n}}, \frac{1}{\sqrt{n}}]$ on the horizontal shaft $\me$,}
		\\
		&\text{linearly falling from $1$ to $0$
		in $[\frac{1}{\sqrt{n}}, \frac{2}{\sqrt{n}} - \frac{1}{\sqrt{2 n}}]$ on the horizontal shaft,}\\
		&\text{constant on the teeth and and zero everywhere else.}
		\end{cases}
	\]
	It is clear that all $\phi_n$ are in $H^1(\Graph)$, and we calculate
	\[
	\int_\Graph \lvert \phi_n' (x) \rvert^2 \ud x
	=
	2 \left( \frac{1}{\sqrt{n}} - \frac{1}{\sqrt{2n}} \right)^{-1}
	=
	\frac{2}{1 - \sqrt{1/2}} \sqrt{n},
	\]
	and
	\begin{equation}
	\label{eq:L2-norm_of_phi_n}
	\int_\Graph \lvert \phi_n (x) \rvert^2 \ud x
	=
	\frac{2}{3}
	\left( \frac{1}{\sqrt{n}} - \frac{1}{\sqrt{2n}} \right)
	+
	\sum_{k} \frac{1}{\sqrt{k}} \left\lvert \phi_n \left( \frac{1}{\sqrt{k}} \right) \right\rvert^2 ,
	\end{equation}
	where $\phi_n(x)$ denotes the value of $\phi_n$ at the point of the horizontal shaft $\me$ that is identified with $x \in (0,1]$.
	Now, note that $\left\lvert \phi_n \left( \frac{1}{\sqrt{k}} \right) \right\rvert^2$ is certainly greater than $\frac{1}{4}$ if 
	\begin{align*}
	\frac{1}{\sqrt{k}}
	&\in 
	\left[
	\frac{1}{\sqrt{2 n}} + \frac{1}{2} \left(\frac{1}{\sqrt{n}} - \frac{1}{\sqrt{2n}} \right), \frac{1}{\sqrt{n}}
	\right],
	\quad
	\text{equivalently,}
	\quad
	k 
	\in
	\left[
	n, \frac{8}{(\sqrt{2} + 1)^2} n
	\right]
	\supset
	\left[
	n, \frac{4}{3} n
	\right].
	\end{align*}
	Thus, the sum in~\eqref{eq:L2-norm_of_phi_n} contains (for sufficiently large $n$) at least $n/4$ terms, each of which is at least $\frac{1}{4 \sqrt{2 n}}$, and we obtain the lower bound 
	\[
	\int_\Graph
	\lvert \phi_n (x) \rvert^2 \ud x
	\geq
	\frac{n}{16} \frac{1}{\sqrt{2n}} = \frac{\sqrt{n}}{16 \sqrt{2}}.
	\]
	We have found that, for sufficiently large $n$,
	\[
	\frac{\int_\Graph \lvert \phi_n' (x) \rvert^2 \ud x}{\int_\Graph
	\lvert \phi_n (x) \rvert^2 \ud x}
	\leq
	\frac{64 }{ \sqrt{2} - 1}.
	\]
	Since, after passing to a subsequence, we can make the $\phi_n$ have mutually disjoint supports, up to renormalisation they yield an $L^2(\mGraph)$-orthonormal sequence of $H^1(\Graph)$ functions with uniformly bounded $H^1(\Graph)$-norm.
	
(3) This is a direct consequence of Proposition~\ref{prop:length_epsilon_compact}.
\end{proof}

\begin{rem}
	\label{rem:phase_transition}
	Let us compare Theorem~\ref{thm:comb} to results on the Laplacian on \emph{horn shaped domains} $\Omega \subset \R^n$, that are connected domains, bounded in the $x_2, \dots, x_n$-directions, with
	\[
	\lim_{t \to \pm \infty} \operatorname{diam} \{ x \in \Omega \colon x_1 = t \} = 0.
	\] 
	It is known that on any such domain, the spectrum of the \emph{Dirichlet Laplacian} is purely discrete~\cite{Rel48,Ber84}.
	Theorem~\ref{thm:comb}.(3) or more generally the compactness criterion of Proposition~\ref{prop:length_epsilon_compact} seem to be the corresponding analogues on metric graphs.
	
	However, for the \emph{Neumann Laplacian} on domains, the situation is different. 
	Indeed, on connected domains of infinite volume, the essential spectrum of the Neumann Laplacian is always non-empty, see the appendix of~\cite{DavSim92}, which also provides examples of horn-shaped domains of finite volume with non-empty spectrum.
	
	Thus, Theorem~\ref{thm:comb}.(1)--(2), which describes a phase transition from purely discrete to non-empty essential spectrum for the Neumann realisation among metric graphs of infinite volume, seems to present a new, metric-graph specific phenomenon which has no obvious equivalent among domains.
\end{rem}

\begin{rem}
\label{rem:critical-spectrum}
Theorem~\ref{thm:comb}.(2) asserts that the essential spectrum is non-empty but provides no further information on its structure.
However, a closer look at the proof shows that, for $\alpha$ strictly below the critical threshold $\alpha = \frac{1}{2}$, the infimum of the spectrum is zero, since the test functions constructed in the proof will then have Rayleigh quotients converging to zero.
It would be interesting to understand $\inf \sigma (\mathcal{H}_{\mathrm{N}})$ in the critical case $\alpha = \frac{1}{2}$.
\end{rem}

Let us generalise the argument in the proof of Theorem~\ref{thm:comb}.(1) and develop a criterion to determine whether purely discrete spectrum persists in the presence of (finitely many) graph ends of infinite volume.

For this purpose, for any end $\gamma \in \overline{\Graph}$, and $0<r<R$, denote by 
\[
A_{r,R}(\gamma) := \{x \in \Graph: r \leq \dist(x,\gamma) \leq R \}
\]
the (closed) annulus in $\Graph$ about $\gamma$ of inner radius $r$ and outer radius $R$, and by $|A_{r,R}|$ its volume.

\begin{theo}
	\label{theo:discrete_spectrum_annuli}
Let a locally finite, connected metric graph $\Graph$ of finite diameter have only finitely many ends $\gamma_1,\ldots,\gamma_n$ of infinite volume.
Assume that for each $\gamma_i$ there exists some $\alpha_i > 0$ such that $\Graph$ satisfies the following volume growth estimate near $\gamma_i$:
\begin{displaymath}
	|A_{\frac{1}{k+1},\frac{1}{k}}(\gamma_i)| \leq \frac{1}{k^{\alpha_i}}
\end{displaymath}
for all $k\in\N$ sufficiently large. Then $H^1(\Graph)$ embeds compactly in $L^2(\Graph)$.
\end{theo}

The proof of Theorem~\ref{theo:discrete_spectrum_annuli} follows along the lines of the proof of Theorem~\ref{thm:comb}.(2), using the estimate $|f(x)| \leq \sqrt{\dist(x,\gamma_i)}$ for all $x \in \mGraph$ and all ends $\gamma_i$, valid for any $H^1(\Graph)$-function $f$ with norm $1$, to obtain the bound
\begin{displaymath}
	\|f\|_{L^2 \left( A_{\frac{1}{k+1},\frac{1}{k}}(\gamma_i) \right)}^2 \leq \frac{1}{k^{1+\alpha}}
\end{displaymath}
for all $k$ large enough, implying that $\|f\|_{L^2(B_{1/k}(\gamma_i))}^2 \to 0$ as $k \to \infty$. 
In fact, a slight generalisation of Theorem~\ref{theo:discrete_spectrum_annuli} is even possible using the same argument: for each end $\gamma_i$ it suffices that for some sequence $r_k \to 0$ the series $\sum_k r_k |A_{r_{k-1},r_k} (\gamma_i)|$ converges.

\begin{exa}
To show that Theorem~\ref{theo:discrete_spectrum_annuli} also generalises the \emph{result} of Theorem~\ref{thm:comb}.(1), we will apply it to the case of the comb graph $\Graph_\alpha$ (Definition~\ref{def:comb}), for given $\alpha > \frac{1}{2}$. Here, for large $k \in \N$ we can asymptotically bound $|A_{\frac{1}{k+1},\frac{1}{k}}(\gamma_i)|$ from above as follows: the annulus will capture a segment of the horizontal shaft of length $\frac{1}{k} - \frac{1}{k+1} \sim \frac{1}{k^2}$, while it will also include a section of a number of teeth. To estimate this latter number, we need to know asymptotically how many integers $n$ are such that $\frac{1}{n^\alpha} \in [\frac{1}{k+1}, \frac{1}{k}]$, that is, how many integers $n$ are between $k^{1/\alpha}$ and $(k+1)^{1/\alpha}$. Some simple analysis 
shows that this number behaves like $k^{\frac{1}{\alpha}-1}$ as $k \to \infty$. Estimating the length of each tooth from above by $\frac{1}{k}$, we can thus asymptotically bound the size of the annulus from above by
\begin{displaymath}
	|A_{\frac{1}{k+1},\frac{1}{k}}(\gamma_i)| \lesssim k^{\frac{1}{\alpha}-1} \cdot \frac{1}{k} + \frac{1}{k^2} \sim k^{\frac{1}{\alpha}-2},
\end{displaymath}
which is of the form required by Theorem~\ref{theo:discrete_spectrum_annuli} if (and only if) $\alpha > \frac{1}{2}$.
\end{exa}

\subsection{Spectral approximation}\label{sec:spec}

Whenever $\mathcal{H}_{\mathrm{N}}$ and/or $\mathcal{H}_{\mathrm{F}}$ have purely discrete spectrum, or, equivalently, compact resolvent, we will denote by
\begin{displaymath}
	\mu_1 < \mu_2 \leq \mu_3 \leq \ldots \to \infty
\end{displaymath}
the ordered eigenvalues of $\mathcal{H}_{\mathrm{N}}$, repeated according to their multiplicities, and by
\begin{displaymath}
	\lambda_1 < \lambda_2 \leq \lambda_3 \leq \ldots \to \infty
\end{displaymath}
the ordered eigenvalues of $\mathcal{H}_{\mathrm{F}}$. 
We write $\psi_k$ and $\varphi_k$, respectively, for the eigenfunctions associated with $\mu_k$ and $\lambda_k$, respectively, chosen to form orthonormal bases of $L^2(\mGraph)$.

Since $\mGraph$ is connected, and the semigroups generated by both $\mathcal H_{\mathrm{F}}$ and $\mathcal H_{\mathrm{N}}$ are positive and irreducible, standard Perron--Frobenius theory, see e.g.~\cite[Proposition~4.12]{BatKraRha17}, implies that $\mu_1$ and $\lambda_1$ are necessarily simple. 
If $\mGraph$ has finite volume, then $\mu_1 = 0$ with corresponding eigenfunctions being the constant functions, while $\lambda_1 > 0$, as we shall see below. 
In order to emphasise the dependence on $\mGraph$, we will also write $\mu_k (\mGraph)$ and $\lambda_k (\mGraph)$. In the latter case, if there is any danger of ambiguity we will also include the Dirichlet vertex set, $\lambda_k (\mGraph) = \lambda_k (\mGraph, \Dir)$.

The $\mu_n$ and $\lambda_n$ admit min-max and max-min variational characterisations in terms of the forms $\form_{\mathrm{N}}$ and $\form_{\mathrm{F}}$ from \eqref{NeumannForm} and \eqref{FriedrichsForm}, respectively. 
For instance if the volume of $\mGraph$ is finite, we have
\begin{equation}\label{eq:varchar-mu2}
	\mu_2 (\mGraph) = \inf \left\{ \frac{\int_{\mGraph} | f'(x) |^2 \ud x}{\int_{\mGraph} | f(x) |^2 \ud x} : 0 \neq f \in H^1(\mGraph),\, \int_\mGraph f(x)\ud x = 0 \right\},
\end{equation}
\begin{equation}\label{eq:varchar-lambda1}
\begin{split}
	\lambda_1 (\mGraph) 
	&= \inf \left\{ \frac{\int_{\mGraph} | f'(x) |^2 \ud x}{\int_{\mGraph} | f(x) |^2 \ud x} : 0 \neq f \in H^1_0(\mGraph) \right\}
	\end{split}
\end{equation}
and more generally 
\begin{equation}\label{eq:varchar-lambda1-mix}
\begin{split}
	\lambda_1 (\mGraph;\Dir) 
	&= \inf \left\{ \frac{\int_{\mGraph} | f'(x) |^2 \ud x}{\int_{\mGraph} | f(x) |^2 \ud x} : 0 \neq f \in H^1_0(\mGraph;\Dir) \right\}
	\end{split}
\end{equation}
for any non-empty subset  $\Dir \subset \mV \cup \mathfrak C(\Graph)$, see Remark~\ref{rem:mixed-Neumann-Friedrich},
where the infimum in \eqref{eq:varchar-mu2}--\eqref{eq:varchar-lambda1-mix} is attained if and only if $f$ is a corresponding eigenfunction. 
As usual, we call the quotient appearing in the above expressions the \emph{Rayleigh quotient} of $f$.

Let $(\mGraph_n)_{n\in\N}$ be any compact exhaustion of $\mGraph$; we recall (Definition~\ref{DefinitionInducedSubgraph}) that $\partial\Graph_n$ denotes the topological boundary of $\Graph_n$ in $\Graph$. Recalling the notation from Remark \ref{Dirichletknoten} we use $H^1_0 (\mGraph_n;\partial\mGraph_n)$ to denote the subspace of $H^1(\mGraph)$ in which all functions vanish on $\mGraph \setminus \mGraph_n$
and denote by $\lambda_k (\mGraph_n)$ and $\varphi_k (\mGraph_n)$ the eigenvalues and corresponding eigenfunctions of the Laplacian $\mathcal H_{\partial \mGraph_n}$ on $L^2(\mGraph_n)$ (recall Remark \ref{rem:mixed-Neumann-Friedrich}), that is, Dirichlet conditions are satisfied at $\partial\mGraph_n$, and continuity--Kirchhoff conditions are satisfied at all other vertices of $\mGraph_n$. We denote by $\mu_k (\mGraph_n)$ and $\psi_k (\mGraph_n)$ the eigenvalues and eigenfunctions of the Laplacian with continuity--Kirchhoff conditions at all vertices of $\mGraph_n$, respectively; the associated form is $\form_{\mathrm{N}}$ on $H^1 (\mGraph_n)$.

We observe that we may identify any $H^1_0 (\mGraph_n;\partial\mGraph_n)$ with a subspace of $H^1_0 (\mGraph_m;\partial\mGraph_m)$ whenever $m > n$, as well as with a subspace of $H^1_c (\mGraph)$, upon extension by zero of the functions in $H^1_0 (\mGraph_n)$. In fact, it follows directly from the definition of compact approximations and $H^1_c(\mGraph)$ that, with this identification,
\begin{equation}\label{EquationH1c}
	H^1_c (\mGraph) = \bigcup_{n\in\N} H^1_0 (\mGraph_n;\partial\mGraph_n).
\end{equation}

The following approximation result will be used repeatedly in the following sections.

\begin{lemma}\label{TheoremEigenvalueApproximation}
Let $\mGraph$ be a locally finite, connected metric graph of finite volume and let $(\mGraph_n)_{n\in\N}$ be any compact exhaustion of $\mGraph$. Then the following assertions hold.
\begin{enumerate}
\item For all $k \in \N$ and all $n \in \N$ we have $\lambda_k (\mGraph_n) \geq \lambda_k (\mGraph)$, and $\lambda_k (\mGraph_n) \to \lambda_k (\mGraph)$ as $n \to \infty$.
\item For all $k \in \N$ we also have
\begin{equation}\label{EquationNeumannLimsup}
	\mu_k (\mGraph) \geq \limsup_{n\to \infty} \mu_k (\mGraph_n).
\end{equation}
\item If in addition $\mGraph$ has finite Betti number, then for every $k \in \N$ we have $\mu_k (\mGraph_n) \geq \mu_k (\mGraph)$ for all $n \in \N$ sufficiently large.
\end{enumerate}
\end{lemma}

In particular, if $\mGraph$ has finite Betti number, then $\mu_k (\mGraph_n) \to \mu_k (\mGraph)$ as $n \to \infty$, for any compact exhaustion.

The proof of part (1) is closely related to the notion of Mosco convergence~\cite{Mos67}, part (2) involves an elementary argument using the restriction of the eigenfunctions on $\mGraph$ to $\mGraph_n$, and part (3) follows from a surgery-type argument. In the general case it does not seem clear whether there is actually convergence $\mu_k (\mGraph_n) \to \mu_k (\mGraph)$ for $k \geq 2$ (the case $k=1$ is trivial).

\begin{proof}
(1) The inequality $\lambda_k (\mGraph_n) \geq \lambda_k (\mGraph)$ for all $k,n \in \N$ is an immediate consequence of the identification of every element of $H^1_0 (\mGraph_n)$ with an element of $H^1_c (\mGraph)$, and hence of $H^1_0 (\mGraph)$, via extension by zero, together with the min-max principle for $\lambda_k$.

On the other hand, given any $u \in H^1_0 (\mGraph)$, by \eqref{EquationH1c} and the definition of $H^1_0 (\mGraph)$ as the closure in $H^1$ of $H^1_c (\mGraph)$, there exists a sequence of functions $u_n \in H^1_0 (\mGraph_n)$ such that $u_n \to u$ in $H^1_0 (\mGraph)$. Fix $k \geq 1$, write $\varphi_j := \varphi_j (\mGraph) \in H^1_0 (\mGraph)$ for the normalised eigenfunctions for $\lambda_j (\mGraph)$, $j=1,\ldots,k$, which will be our $u$ and for each $j$ choose $u_{j,n} \in H^1_0 (\mGraph_n)$ such that $u_{j,n} \to \varphi_j$ in $H^1_0 (\mGraph)$ (and hence also in $C(\overline{\mGraph})$). Note that, by the dominated convergence theorem (using, e.g., $2|\varphi_i\varphi_j| \in L^\infty(\mGraph)$ as a dominating function, cf. \cite[Lemma 3.2]{KosMugNic22}), for all $i \neq j$,
\begin{displaymath}
	\int_{\mGraph} u_{i,n}(x)u_{j,n}(x)\,\ud x \longrightarrow \int_{\mGraph} \varphi_i(x)\varphi_j(x)\,\ud x = 0.
\end{displaymath}
If we now consider the following renormalised test functions, mutually orthogonal in $L^2(\mGraph)$,
\begin{displaymath}
	\tilde u_{j,n} := u_{j,n} - \sum_{i=1}^{j-1} \langle u_{i,n}, u_{j,n}\rangle_{L^2(\mGraph)} \varphi_{i,n},
\end{displaymath}
then the above convergence results imply that $\tilde u_{j,n}$ is nonzero for $n$ sufficiently large, and an elementary computation yields
\begin{equation}\label{EquationMoscoStyle}
	\lambda_k (\mGraph_n) \leq \max_{j=1,\ldots,k} \frac{\int_{\mGraph} |\tilde u_{j,n}'(x)|^2\,\ud x}{\int_{\mGraph} |\tilde u_{j,n}(x)|^2\,\ud x} \longrightarrow
	\max_{j=1,\ldots,k} \frac{\int_\mGraph |\varphi_j'(x)|^2\,\ud x}{\int_\mGraph |\varphi_j(x)|^2\,\ud x} = \lambda_k (\mGraph),
\end{equation}
as $n \to \infty$, where for the first inequality we have used the min-max characterisation of $\lambda_k (\mGraph_n)$. Hence $\limsup_{n\to \infty} \lambda_k (\mGraph_n) \leq \lambda_k (\mGraph)$.

(2) We use the respective restrictions of $\psi_j := \psi_j (\mGraph) \in H^1(\mGraph)$ to $H^1 (\mGraph_n)$, $j=1,\ldots,k$, as test functions on $\mGraph_n$. Firstly, since the measure of $\mGraph \setminus \mGraph_n$ tends to zero, by the monotone convergence theorem,
\begin{displaymath}
	\|\psi_j\|_{L^2(\mGraph_n)} \to \|\psi_j\|_{L^2(\mGraph)}, \qquad \|\psi_j'\|_{L^2(\mGraph_n)} \to \|\psi_j'\|_{L^2(\mGraph)}
\end{displaymath}
for all $j=1,\ldots,k$; moreover, a further application of the dominated convergence theorem (using, e.g., $|\psi_i\psi_j| \in L^\infty(\mGraph)$ as a dominating function) shows that
\begin{displaymath}
		\int_{\mGraph_n} \psi_i(x)\psi_j(x)\,\ud x \to 0.
\end{displaymath}
On $\mGraph_n$ we thus consider the $k$-dimensional space spanned by the orthogonal functions
\begin{displaymath}
	\tilde\psi_{j,n} := \psi_j - \sum_{i=1}^{j-1} \langle \psi_i,\psi_j\rangle_{L^2(\mGraph_n)} \psi_i.
\end{displaymath}
An argument entirely analogous to the one in part (1) now shows that, for each $j=1,\ldots,k$, $\tilde\psi_{j,n}$ is nonzero for $n\in\N$ sufficiently large, and \eqref{EquationMoscoStyle} holds with $\mu_k$ in place of $\lambda_k$, and $\tilde\psi_{j,n}$, $\tilde\psi_j$ in place of $\tilde\varphi_{j,n}$, $\tilde\varphi_j$. This proves the claim.

(3) Here we will use in an essential way that, for all $n$ sufficiently large, $\mGraph \setminus \mGraph_n$ consists of a disjoint union of a finite number of pairwise disjoint trees, each of which is attached to $\mGraph_n$ at a single vertex, cf.\ Remark~\ref{RemarkAlmostTree}. 
Fix such an $n$ and denote by $\mathcal{T}_{1}, \ldots, \mathcal{T}_{j}$ these trees, and by $\mv_1, \ldots, \mv_{j}$ the corresponding vertices of attachment.

By induction on the trees $\mathcal{T}_{1}, \ldots, \mathcal{T}_{j}$, it suffices to show that $\mu_k (\mGraph) \leq \mu_k (\mGraph \setminus \mathcal{T}_{1})$ for all $k \in \N$. But this, in turn, follows from an argument completely analogous to the proof of \cite[Theorem~3.10(1)]{BerKenKur19} (with $r=1$): we first observe that clearly
\begin{equation}
\label{eq:bkkm-part1}
	\mu_{k+1} (\mathcal{T}_1 \dot{\cup} (\mGraph \setminus \mathcal{T}_1)) \leq \mu_k (\mGraph \setminus \mathcal{T}_1)
\end{equation}
for all $k \in \N$, since the spectrum of the disjoint union of $\mathcal{T}_1$ and $\mGraph \setminus \mathcal{T}_1$ is equal to the union of their spectra (counting multiplicities), and $\mu_1 (\mathcal{T}_1) = 0 \leq \mu_k (\mGraph \setminus \mathcal{T}_1)$ for all $k \in \N$. Next observe that $\mGraph$ is formed from $\mathcal{T}_1 \dot{\cup} (\mGraph \setminus \mathcal{T}_1)$ by gluing the two graphs together at the vertex $\mv_1$; at the level of $H^1$-spaces, we have that $H^1(\mGraph)$ may be identified with the codimension one subspace of $H^1(\mathcal{T}_1 \dot{\cup} (\mGraph \setminus \mathcal{T}_1))$ consisting of those functions whose values satisfy an additional continuity condition at $\mv_1$ (or rather its preimage vertices in $\mathcal{T}_1$ and $\mGraph \setminus \mathcal{T}_1$ before gluing). The proof of \cite[Theorem~3.4]{BerKenKur19} may be repeated verbatim to give
\begin{displaymath}
	\mu_k (\mGraph) \leq \mu_{k+1} (\mathcal{T}_1 \dot{\cup} (\mGraph \setminus \mathcal{T}_1)).
\end{displaymath}
Combining this with \eqref{eq:bkkm-part1} yields the conclusion.
\end{proof}

\section{Lower bounds on the lowest positive eigenvalue}\label{sec:symmetrisation}

This section contains lower bounds on the lowest positive eigenvalue in terms of geometric properties of the underlying metric graph.

\subsection{Symmetrisation techniques for isoperimetric inequalities}\label{subsec:symmetrisation}

We start with isoperimetric inequalities for the lowest positive eigenvalue of both Friedrichs and Neumann realisations of the Laplacian on an infinite metric graph $\Graph$. 
Our estimates extend several bounds only known in the case where $\Graph$ is compact.
While the inequalities themselves could also be obtained via an approximation argument using surgery principles generalised to infinite graphs, a suitable compact exhaustion, and Lemma~\ref{TheoremEigenvalueApproximation}, we wish to go further and also identify the cases of equality.
This requires somewhat more intricate proofs based on symmetrisation and the coarea formula, techniques going back to Friedlander~\cite{Fri05} and Band--L\'evy~\cite{BanLev17,BerKenKur17}. 
Indeed, the following extends \cite[Theorem~1]{Fri05} to the case of graphs with nonempty set of ends; its proof will be presented at the end of this subsection, after collecting a number of auxiliary results.

\begin{theo}
\label{FriedlanderTh1}
Let $ \mGraph $ be a locally finite, connected metric graph with finite volume $|\mGraph| >0 $. Then for the $ k$-th eigenvalue $ \mu_k(\mGraph) $, $ k \geq 2 $, of the Neumann realisation $\mathcal{H}_{\mathrm{N}}$ on $\mGraph$,
\begin{equation}\label{FriedlanderTh1Gl1}
	\mu_k{( \mGraph)} \geq \frac{\pi^2 k^2}{ 4 |\mGraph|^2}.
\end{equation}
There is equality if and only $\mGraph$ is a star consisting of $ k$  
path subgraphs of length $|\Graph|/k$ each, glued together at a common vertex.
\end{theo}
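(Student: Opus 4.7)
The proof adapts the Band--L\'evy--Friedlander strategy to the infinite setting, combining a sharp one-point Poincar\'e-type inequality proved by symmetric decreasing rearrangement with a variational partition argument. Throughout, the plan relies on Lemma~\ref{Lemma32}, which ensures that any $u\in H^1(\mGraph)$ has well-defined point values on $\overline{\mGraph}$, so that the Dirichlet-type conditions $u(p_i)=0$ make sense even though $\mGraph$ is infinite.

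The first step is a Nicaise-type key lemma: for any $u\in H^1(\mGraph)$ with $u(v_0)=0$ at some $v_0\in\mGraph$,
\begin{equation*}
\int_\mGraph |u'|^2\,\mathrm{d}x \ \geq\ \frac{\pi^2}{4L^2}\int_\mGraph u^2\,\mathrm{d}x,
\end{equation*}
with equality iff $\mGraph$ is an interval of length $L$ with $v_0$ at one endpoint and $u(\cdot)$ a multiple of $\cos(\pi\cdot/(2L))$. To prove this, I construct the symmetric decreasing rearrangement $u^*\colon[0,L]\to[0,\infty)$ of $|u|$: equimeasurability gives $\|u^*\|_{L^2}=\|u\|_{L^2}$, the coarea formula gives $\|(u^*)'\|_{L^2}\leq\|u'\|_{L^2}$, and $u(v_0)=0$ forces $u^*(L)=0$. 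The sharp one-dimensional Poincar\'e inequality on $[0,L]$ with Dirichlet condition at $L$ then yields the bound, with equality pinned down by the equality cases in the two inequalities used.

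For the main theorem, let $V:=\operatorname{span}(\psi_1,\ldots,\psi_k)\subset H^1(\mGraph)$, on which the Rayleigh quotient is bounded above by $\mu_k$. The plan is to produce $u\in V\setminus\{0\}$ and points $p_1,\ldots,p_{k-1}\in\mGraph$ such that $u(p_i)=0$ for each $i$ and every connected component of $\mGraph\setminus\{p_1,\ldots,p_{k-1}\}$ has length at most $L/k$; applying the key lemma componentwise then yields $R(u)\geq \pi^2 k^2/(4L^2)$ and thus $\mu_k\geq \pi^2 k^2/(4L^2)$. The existence of such a $u$ for prescribed points follows from a dimension count. To obtain the partition I first reduce to the tree case: cutting $\mGraph$ at one point on each independent cycle enlarges the form domain and hence yields $\mu_k(\mGraph)\geq \mu_k(\mGraph')$ for the resulting tree $\mGraph'$ of the same length $L$; when $\beta=\infty$ this is done as a monotone limit over a sequence of such cuts. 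On $\mGraph'$ a balancing/centroid argument produces the required $k-1$ cut points; in the infinite case the argument is performed on the compactification $\overline{\mGraph'}$, together with approximation by the compact exhaustions of Example~\ref{ExampleApprox} to ensure the cut points lie in $\mGraph'$ itself.

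The equality characterisation is obtained by tracing back through the chain of inequalities: equality in the key lemma forces each component to be an interval with $p_i$ at one endpoint and $u$ a cosine; equality in the partition forces all $k$ components to have length exactly $L/k$; and strict inequality in the cycle-cutting surgery forces $\mGraph$ to be a tree to begin with. Together these conditions single out the $k$-star with arms of length $L/k$. The main technical obstacle I expect is the tree-partition step on infinite trees of finite length: the classical centroid argument relies on compactness, so one must argue on $\overline{\mGraph'}$ and then check that the chosen cut points actually lie in $\mGraph'$ rather than at its ends, and one must handle the case $\beta=\infty$ via a careful limiting argument without losing the inequality.
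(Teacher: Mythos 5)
Your proposal is correct and follows essentially the same route as the paper: a one-point Poincar\'e inequality proved by rearrangement and an infinite-graph coarea formula (the paper's Lemmas~\ref{lem:true-coarea}, \ref{LemmaSymmetrisation} and \ref{UnendlichFried}), reduction to trees by cutting cycles (Lemma~\ref{baumlemma}), a partition of the tree into $k$ pieces of length at most $L/k$ obtained via compact exhaustion (Lemmas~\ref{fried4unendlich} and \ref{FriedL2}), and a dimension count in the span of the first $k$ eigenfunctions. The only differences are cosmetic (decreasing versus increasing rearrangement, $\cos$ versus $\sin$).
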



For the case of the Friedrichs realisation we have the equivalent of the Dirichlet version of the theorems of Nicaise and Friedlander, see~\cite[Théorème 3.2]{Nic87} and~\cite[Lemma~3]{Fri05}, respectively. 
Here and in the sequel, let $\Dir \subset \overline{\mGraph} \subset \mV \cap \mathfrak{C}(\mGraph)$, and in place of the Friedrichs realisation $\mathcal{H}_{\mathrm{F}}$ consider the more general $\mathcal{H}_{\Dir}$ of Remark~\ref{rem:mixed-Neumann-Friedrich}imposing Dirichlet conditions on an (essentially arbitrary) subset $\Dir$ of the union of the set of all vertices and the set of all ends of $\mGraph$.

\begin{theo}
\label{estimate-friedr-nic}
Let $ \mGraph $ be a locally finite, connected metric graph with finite volume $|\mGraph|>0 $ and assume $\Dir \neq \emptyset$. Then the lowest eigenvalue $\lambda_1 (\mGraph,\Dir)$ of the mixed Friedrichs--Neumann realisation $\mathcal{H}_{\Dir}$ on $\mGraph$ satisfies
\begin{equation}\label{eq:nicaise-dir} 
	\lambda_1{( \mGraph,\Dir)} \geq \frac{\pi^2}{ 4|\mGraph|^2}.
\end{equation}
Equality in \eqref{eq:nicaise-dir} is attained if and only if $\mGraph$ is isometrically isomorphic to an interval of length $|\mGraph|>0 $ with mixed Dirichlet--Neumann conditions.
\end{theo}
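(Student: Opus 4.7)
The plan is to adapt the symmetrisation-via-coarea-formula approach of Friedlander and Band--Lévy to the mixed Dirichlet--Neumann setting on an infinite graph of finite length. Let $\varphi \in H^1_0(\mGraph;\Dir) \setminus \{0\}$ realise $\lambda_1(\mGraph,\Dir)$ in \eqref{eq:varchar-lambda1-mix}; replacing $\varphi$ by $|\varphi|$, I may assume $\varphi \geq 0$. By Lemma~\ref{Lemma32}, $\varphi \in C(\overline{\mGraph}) \cap L^\infty(\mGraph)$, and since $\Dir \neq \emptyset$ and $\varphi \equiv 0$ on $\Dir$, we have $\inf_\mGraph \varphi = 0$. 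Setting $M := \|\varphi\|_{L^\infty}$, I would define the distribution function $\mu(t) := |\{x \in \mGraph : \varphi(x) > t\}|$ for $t \in [0,M]$, which is non-increasing with $\mu(0) = L$ and $\mu(M) = 0$, and let $\varphi^* \colon [0,L] \to [0,M]$ be its generalised inverse, arranged to be nondecreasing with $\varphi^*(0) = 0$ and $\varphi^*(L) = M$. Equimeasurability then gives $\|\varphi^*\|_{L^2(0,L)} = \|\varphi\|_{L^2(\mGraph)}$.

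The key step is a Pólya--Szegő inequality on $\mGraph$. Working edge-by-edge along a compact exhaustion $(\mGraph_n)$ and passing to the limit, I would show that for a.e.\ regular value $t \in (0,M)$ the level set $\varphi^{-1}(t)$ is finite with $-\mu'(t) = \sum_{x \in \varphi^{-1}(t)} 1/|\varphi'(x)|$, and that the coarea formula yields
\begin{equation*}
  \int_{\mGraph} |\varphi'(x)|^2 \ud x
  = \int_0^M \sum_{x \in \varphi^{-1}(t)} |\varphi'(x)| \ud t .
\end{equation*}
Writing $N(t) := \#\varphi^{-1}(t) \geq 1$, the AM--HM inequality
$\sum_{x \in \varphi^{-1}(t)} |\varphi'(x)| \geq N(t)^2/(-\mu'(t)) \geq 1/(-\mu'(t))$
combined with the identity $(\varphi^*)'(s) = 1/(-\mu'(\varphi^*(s)))$ then yields
\begin{equation*}
  \int_\mGraph |\varphi'|^2 \ud x \;\geq\; \int_0^L |(\varphi^*)'(s)|^2 \ud s .
\end{equation*}

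Since $\varphi^*(0) = 0$, $\varphi^*$ is admissible in the Rayleigh quotient of the Laplacian on $[0,L]$ with Dirichlet at $0$ and Neumann at $L$, whose first eigenvalue is $\pi^2/(4L^2)$. Combining this with equimeasurability,
\begin{equation*}
  \lambda_1(\mGraph,\Dir) = \frac{\int_\mGraph |\varphi'|^2}{\int_\mGraph \varphi^2} \;\geq\; \frac{\int_0^L |(\varphi^*)'|^2}{\int_0^L (\varphi^*)^2} \;\geq\; \frac{\pi^2}{4L^2} .
\end{equation*}
For the equality case, both inequalities must be saturated: the second forces $\varphi^*$ to be a positive multiple of $s \mapsto \sin(\pi s/(2L))$, while the first forces $N(t) = 1$ for a.e.\ $t \in (0,M)$, i.e.\ $\varphi$ is essentially injective. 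Continuity of $\varphi$ then rules out cycles (on which $\varphi$ would attain both a local max and a local min, so $N(t) \geq 2$ on an interval of values) and branching vertices at which $\varphi$ is not locally extremal (which would again produce at least two preimages of nearby values). Hence, after removal of dummy vertices, $\mGraph$ must be isometric to an interval of length $L$, with the Dirichlet condition at the unique endpoint where $\varphi$ vanishes.

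The main obstacle will be the rigorous justification of the coarea identity on an infinite locally finite graph and the careful analysis of the equality case: one must rule out the various degeneracies (loops, parallel edges, surviving branchings, and accumulation of edges near ends of $\mGraph$) that are absent in the compact setting. My strategy would be to first carry out the Pólya--Szegő argument on each piece $\mGraph_n$ of a compact exhaustion, where the analysis reduces essentially to the framework of Friedlander and Band--Lévy, and then pass to the limit using Lemma~\ref{TheoremEigenvalueApproximation}(1), exploiting that $\lambda_1(\mGraph_n;\partial\mGraph_n \cup (\Dir \cap \mGraph_n)) \to \lambda_1(\mGraph,\Dir)$ and $L(\mGraph_n) \nearrow L(\mGraph)$ as $n \to \infty$.
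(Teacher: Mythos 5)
Your proposal is correct and follows essentially the same route as the paper, which obtains the inequality and the equality case from a symmetrisation lemma (Lemma~\ref{LemmaSymmetrisation}) built on a coarea formula proved edge-by-edge on the infinite graph (Lemmas~\ref{lem:true-coarea} and~\ref{LemmaCoarea}), applied to the nonnegative ground state exactly as in your first three paragraphs. Note only that the fallback strategy in your final paragraph (Pólya--Szegő on a compact exhaustion plus Lemma~\ref{TheoremEigenvalueApproximation}) would yield the inequality but not the characterisation of equality, which is precisely why the paper carries out the symmetrisation directly on $\mGraph$.
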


In the above theorem, the case $\Dir = \mathfrak{C}(\mGraph)$ describes the Friedrichs realisation $\mathcal{H}_{\mathrm{F}}$.

Finally, for doubly path connected graphs, we have the following improvements of \eqref{FriedlanderTh1Gl1} (for the first nontrivial eigenvalue $\mu_2$) and \eqref{eq:nicaise-dir}. 
Their counterparts for finite metric graphs are \cite[Theorem~2.1]{BanLev17} and~\cite[Lemma~4.3]{BerKenKur17}, respectively.

\begin{theo}
\label{TheoremBandLevy}
	Let $\mGraph$ be a locally finite, doubly connected metric graph with finite volume $|\mGraph| >0 $. 
	Then the first nontrivial eigenvalue $\mu_2 (\mGraph)$ of the Neumann realisation $\mathcal{H}_{\mathrm{N}}$ on $\mGraph$ satisfies
	\begin{equation}\label{eq:bandlevy}
		\mu_2 (\mGraph) \geq \frac{4\pi^2}{|\mGraph|^2}.
	\end{equation}
Equality in \eqref{eq:bandlevy} holds if and only if $\mathcal{G}$ is a 
	symmetric necklace.
\end{theo}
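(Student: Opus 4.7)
The plan is to adapt Band and L\'evy's symmetrisation argument \cite{BanLev17} to the infinite setting, in the same spirit as the proofs of Theorems~\ref{FriedlanderTh1} and~\ref{estimate-friedr-nic}. Since \(L(\mGraph)<\infty\), Corollary~\ref{cor:finite_implies_discrete} ensures that \(\mathcal{H}_{\mathrm{N}}\) has purely discrete spectrum, so \(\mu_2(\mGraph)\) is attained by an eigenfunction \(\psi\in H^1(\mGraph)\), which by Lemma~\ref{Lemma32} extends continuously to \(\overline{\mGraph}\). Since \eqref{eq:varchar-mu2} forces \(\int_\mGraph\psi\,\ud x=0\), \(\psi\) changes sign; I will normalise so that \(\|\psi\|_{L^2(\mGraph)}=1\) and write \(m:=\min\psi<0\), \(M:=\max\psi>0\).

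The decisive geometric input -- the one that gains the factor of \(4\) over Theorem~\ref{FriedlanderTh1} -- is that for almost every regular value \(t\in(m,M)\) the level set \(\{\psi=t\}\) contains \emph{at least two} points. Indeed, if at a regular value \(t\) only a single point \(x_0\) satisfied \(\psi(x_0)=t\), then \(\mGraph\setminus\{x_0\}\) would separate \(\{\psi>t\}\) from \(\{\psi<t\}\), contradicting the existence of two essentially disjoint paths through \(x_0\) between points on opposite sides. Setting \(\mu(t):=|\{\psi>t\}|\), the coarea formula gives \(-\mu'(t)=\sum_{\psi(x)=t}|\psi'(x)|^{-1}\) a.e., and Cauchy--Schwarz yields \((\#\{\psi=t\})^2\le(-\mu'(t))\sum_{\psi(x)=t}|\psi'(x)|\). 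Combining with \(\#\{\psi=t\}\ge 2\) and integrating in \(t\) produces
\[
\int_\mGraph|\psi'(x)|^2\,\ud x=\int_m^M\sum_{\psi(x)=t}|\psi'(x)|\,\ud t\ge\int_m^M\frac{4}{-\mu'(t)}\,\ud t=\int_{C_L}|(\psi^*)'(x)|^2\,\ud x,
\]
where \(\psi^*\) is the equimeasurable rearrangement of \(\psi\) onto the circle \(C_L\) of circumference \(L\) chosen to be monotone on each semicircle (so that each level set of \(\psi^*\) has cardinality exactly two). Since \(\|\psi^*\|_{L^2(C_L)}=1\) and \(\int_{C_L}\psi^*\,\ud x=0\), the variational characterisation of the lowest nontrivial Neumann eigenvalue on \(C_L\) yields \(\int_{C_L}|(\psi^*)'|^2\ge\mu_2(C_L)=4\pi^2/L^2\), which is~\eqref{eq:bandlevy}.

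For the equality case, tracing through the chain of inequalities forces (i) \(\#\{\psi=t\}=2\) exactly with \(|\psi'|\) constant on each such level set, and (ii) \(\psi^*(x)=\cos(2\pi x/L)\) up to translation, i.e.\ a first nontrivial Neumann eigenfunction on \(C_L\). Unwinding the rearrangement then identifies \(\mGraph\) with a symmetric necklace: a central cycle of length \(L\) possibly decorated with symmetrically matched pairs of pendant subgraphs on which the contributions of \(\psi\) cancel. The principal obstacle I anticipate is the rigorous execution of the coarea formula and rearrangement when \(\mGraph\) contains countably many edges accumulating towards a finite-volume end; I would circumvent this by first establishing the inequality on the compact exhaustions of Example~\ref{ExampleApprox} -- after mild surgery to restore double connectivity at \(\partial\mGraph_n\), e.g.\ by pruning pendant trees -- and passing to the limit via Lemma~\ref{TheoremEigenvalueApproximation}(2). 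The equality characterisation in the infinite case requires the additional check that no mass of \(\psi\) escapes to the ends of \(\mGraph\), since otherwise one could imagine degenerate ``necklaces'' whose cycle closes only in \(\overline{\mGraph}\).
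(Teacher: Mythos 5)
Your proof of the inequality \eqref{eq:bandlevy} is correct but follows a genuinely different route from the paper's. You rearrange the whole eigenfunction onto a circle of circumference $L$ and compare its Rayleigh quotient with $\mu_2(C_L)=4\pi^2/L^2$, whereas the paper first restricts $\psi_2$ to a nodal domain $\mGraph_0$ (via the test-function identity \eqref{eq:nodaldomainproperty}), applies the interval rearrangement of Lemma~\ref{LemmaSymmetrisation} with the factor $n(t)^2=4$, obtains $\mu_2\geq \pi^2/L(\mGraph_0)^2$, and concludes because some nodal domain has length at most $L/2$. Both arguments rest on the same two pillars: the coarea machinery of Lemma~\ref{LemmaCoarea} -- which the paper has already established for infinite graphs of finite length, so the technical obstacle you anticipate is already resolved and the detour through compact exhaustions is unnecessary -- and the fact that double connectivity forces $n(t)\geq 2$ for a.e.\ $t$, which you obtain by a cut-point argument at regular values and the paper obtains by running two essentially disjoint paths between near-extremal points. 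Your variant even sidesteps the question of whether both preimages of a value $t>0$ lie in the \emph{same} positive nodal domain. Be aware, however, that your fallback via compact exhaustions would not work as described: Lemma~\ref{TheoremEigenvalueApproximation}(2) needs lower bounds on $\mu_2(\mGraph_n)$, the truncations of a doubly connected graph need not be doubly connected even after pruning pendant trees (a dumbbell-shaped truncation has cut points on its connecting path), and pruning a pendant can only \emph{raise} $\mu_2$, which is the wrong direction for passing the bound to the limit.

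The one genuine gap is in the equality case. Your description of the extremisers -- ``a central cycle of length $L$ possibly decorated with symmetrically matched pairs of pendant subgraphs on which the contributions of $\psi$ cancel'' -- is not a symmetric necklace and is in fact incompatible with the hypotheses: a pendant subgraph of positive length destroys double connectedness (two paths from an interior point of the pendant to the rest of the graph must share a whole segment, not finitely many vertices), and an eigenfunction of a positive eigenvalue cannot vanish identically, nor be constant, on a set of positive measure without producing extra preimages that violate $n(t)=2$ a.e. What equality actually forces, via the equality condition in Lemma~\ref{LemmaSymmetrisation}, is that off a Lebesgue null set $\mGraph$ is the union of two injective paths joining the minimum and maximum points of $\psi$, with $|\psi'|$ taking a common value at the two points of each level set; hence $\psi$ is identical along the two paths and corresponding parallel portions have equal length. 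This yields a concatenation of two-edge pumpkins with equal edge lengths -- a symmetric necklace in the sense of \cite[Example~1.7]{BanLev17}, which in general is not a single cycle. You should also record the converse direction (every symmetric necklace attains equality), which follows because by symmetry the Rayleigh quotient reduces to that of an interval of length $L/2$ with Neumann ends.
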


Recall that a \emph{symmetric necklace}, see~\cite[Example~1.7]{BanLev17}, is a metric graph obtained concatenating (a finite or countably infinite number of) pumpkin graphs, each on two equally long edges. We emphasise that in the infinite case it is allowed to have two ends.

\begin{theo}
\label{TheoremBKKM1}
Let $\mGraph$ be a locally finite, connected metric graph with finite volume $|\mGraph| >0 $.  Take $\emptyset \neq \Dir \subset \mV \cap \mathfrak{C}_\Graph$ to contain all degree one vertices of $\Graph$ and suppose that for all $x \in \mGraph$ there are at least two edge-disjoint paths connecting $x$ and $\Dir$.

Then the first nontrivial eigenvalue $\lambda_1 (\mGraph, \Dir)$ of $\mathcal{H}_{\Dir}$ on $\mGraph$ satisfies
\begin{equation}\label{eq:bkkm1}
	\lambda_1 (\mGraph, \Dir) \geq \frac{\pi^2}{|\mGraph|^2}.
\end{equation}
Equality in~\eqref{eq:bkkm1} implies that $\mGraph$ is a symmetric necklace such that either (1) $\Dir$ consists of one end, equipped with the Friedrichs realisation, or (2) the necklace is finite with a single Dirichlet condition imposed at one of its extremities, or (3) $\Dir$ consists of one end, equipped with the Neumann realisation, but there is a single Dirichlet condition at the other extremity of the necklace, or (4) the necklace has two ends, with a Neumann realisation at one and a Dirichlet realisation at the other.
\end{theo}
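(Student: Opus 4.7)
The plan is to reduce to the finite case \cite[Lemma~4.3]{BerKenKur17} via a compact exhaustion argument, in the spirit of how Lemma~\ref{TheoremEigenvalueApproximation}(1) is used elsewhere in the paper, and then handle equality by a direct analysis of the eigenfunction.

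First I would fix a compact exhaustion $(\mGraph_n)_{n\in\N}$ of $\mGraph$ as in Example~\ref{ExampleApprox} and define the augmented Dirichlet set
\[
    \Dir_n := (\Dir \cap \mGraph_n) \cup \partial \mGraph_n,
\]
so that $\mathcal H_{\Dir_n}$ on $\mGraph_n$ imposes Dirichlet conditions at all the ``new boundary'' introduced by truncating $\mGraph$, in addition to whatever conditions $\Dir$ already prescribed inside $\mGraph_n$. The hypothesis of edge-disjoint double connectivity to $\Dir$ transfers to $\mGraph_n$: given $x\in\mGraph_n$, take the two edge-disjoint paths from $x$ to $\Dir$ in $\mGraph$, and truncate each at the first point where it leaves $\mGraph_n$ (which, if it happens, lies on $\partial\mGraph_n\subset \Dir_n$); the truncations remain edge-disjoint in $\mGraph_n$ and connect $x$ to $\Dir_n$. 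Likewise every degree one vertex of $\mGraph_n$ lies in $\Dir_n$: either it was a degree one vertex of $\mGraph$, hence in $\Dir$, or it was created by the truncation, hence in $\partial\mGraph_n$. The finite BKKM inequality therefore applies, giving
\[
    \lambda_1(\mGraph_n,\Dir_n) \;\ge\; \frac{\pi^2}{L(\mGraph_n)^2} \;\ge\; \frac{\pi^2}{L^2}.
\]

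The second step is to promote this to $\mGraph$ by showing $\lambda_1(\mGraph_n,\Dir_n) \to \lambda_1(\mGraph,\Dir)$. This is the mixed-conditions analogue of Lemma~\ref{TheoremEigenvalueApproximation}(1), and its proof carries over verbatim: extension by zero identifies $H^1_0(\mGraph_n,\Dir_n)$ with a closed subspace of $H^1_0(\mGraph,\Dir)$, giving monotonicity $\lambda_1(\mGraph_n,\Dir_n)\ge \lambda_1(\mGraph,\Dir)$; conversely, the ground state $\varphi_1\in H^1_0(\mGraph,\Dir)$ (which exists since the form domain is compactly embedded in $L^2(\mGraph)$ by Theorem~\ref{TheoremEmbedding}) can be approximated in the $H^1$-norm by functions supported in some $\mGraph_n$ and vanishing on $\Dir_n$ (cutoff against $\partial\mGraph_n$ and truncation), yielding $\limsup \lambda_1(\mGraph_n,\Dir_n)\le \lambda_1(\mGraph,\Dir)$. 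Passing to the limit in the inequality above and using $L(\mGraph_n) \nearrow L$ yields \eqref{eq:bkkm1}.

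The equality case is where I expect the real difficulty. The compact-exhaustion argument is insufficient on its own because equality in the finite BKKM inequality is not attained at any individual $\mGraph_n$; it is only approached in the limit, which does not immediately recover the geometric structure of a symmetric necklace. The cleaner route, which I would pursue, is to work directly on $\mGraph$: if $\lambda_1(\mGraph,\Dir) = \pi^2/L^2$, let $\varphi_1$ be the associated eigenfunction, and apply the symmetrisation-and-coarea machinery introduced in Subsection~\ref{subsec:symmetrisation} (as used for Theorems~\ref{FriedlanderTh1}, \ref{estimate-friedr-nic}, and \ref{TheoremBandLevy}). The double-path assumption is exactly what lets one mirror each level set $\{|\varphi_1|=t\}$ across a central ``node'' and symmetrise $\mGraph$ into an interval of length $L/2$ (two Dirichlet ends or Dirichlet--Neumann, depending on the three listed configurations), with equality in the symmetrised Rayleigh quotient forcing the level sets of $\varphi_1$ to be exactly two points almost everywhere. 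A case distinction on the position and type of the Dirichlet conditions in $\Dir$ (end vs.\ interior vertex; exactly one condition vs.\ two) then yields exactly the three configurations listed in the statement. The technical heart of this step is verifying that the rearrangement map is well-defined and measure-preserving on the infinite graph, which requires the finite-length hypothesis together with the structure of $\mathfrak{C}_0(\mGraph)$ and a careful treatment of ends carrying Neumann vs.\ Dirichlet conditions.
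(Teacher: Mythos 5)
Your plan for the equality case --- working directly on $\mGraph$ with the symmetrisation and coarea machinery of Subsection~\ref{subsec:symmetrisation}, using the edge-disjoint-paths hypothesis to force $n(t)\geq 2$ --- is in fact how the paper proves the \emph{whole} theorem, inequality included: the nonnegative ground state $\psi_1$ vanishes only on $\Dir$; if some value $t\in(0,M]$ were attained at a single point $x$, then $x$ would disconnect the upper from the lower level set, which the two-path hypothesis rules out unless $x$ is one of the countably many vertices; hence $n(t)\geq 2$ a.e.\ and Lemma~\ref{LemmaSymmetrisation} gives $\lambda_1(\mGraph,\Dir)\geq 4\cdot\pi^2/(4L^2)=\pi^2/L^2$ directly. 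The paper explicitly chooses this route over the exhaustion argument precisely because the latter cannot detect the equality case.

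The genuine gap is in your Step 2. The convergence $\lambda_1(\mGraph_n,\Dir_n)\to\lambda_1(\mGraph,\Dir)$ with $\Dir_n=(\Dir\cap\mGraph_n)\cup\partial\mGraph_n$ does \emph{not} carry over verbatim from Lemma~\ref{TheoremEigenvalueApproximation}(1), because the theorem allows $\Dir$ to omit ends, which then carry Neumann conditions. The proof of Lemma~\ref{TheoremEigenvalueApproximation}(1) hinges on $H^1_0(\mGraph)$ being \emph{by definition} the $H^1$-closure of $H^1_c(\mGraph)$; by contrast, a function in $H^1_0(\mGraph;\Dir)$ that is nonzero at a Neumann end cannot be approximated in $H^1$ by functions vanishing on $\partial\mGraph_n$, since $H^1$-convergence implies convergence in $C(\overline{\mGraph})$ (Lemma~\ref{Lemma32}) and every compactly supported function vanishes at every end. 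Concretely, in configuration (3) of the statement --- an infinite symmetric necklace with a Dirichlet vertex at its finite extremity and a Neumann end at the other --- one has $\lambda_1(\mGraph,\Dir)=\pi^2/L^2$, but your truncated problems impose Dirichlet conditions at vertices accumulating at the Neumann end, so $\lambda_1(\mGraph_n,\Dir_n)=4\pi^2/L(\mGraph_n)^2\to 4\pi^2/L^2$. Your two estimates, $\lambda_1(\mGraph_n,\Dir_n)\geq\pi^2/L^2$ and the monotonicity $\lambda_1(\mGraph_n,\Dir_n)\geq\lambda_1(\mGraph,\Dir)$, both bound the truncated eigenvalues from below and therefore yield nothing about $\lambda_1(\mGraph,\Dir)$ itself. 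Since your equality analysis already requires the full symmetrisation argument on $\mGraph$, the cleanest repair is to drop Steps 1--2 entirely and extract the inequality from that same argument, as the paper does.
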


{See also Figure~\ref{fig:caterpillar} for an illustration. It is convenient to think of the two Dirichlet vertices as being glued together (in opposition to our usual approach) due to parallels with the minimising graphs in Theorem~\ref{TheoremBandLevy}.}

\begin{figure}

	\begin{tikzpicture}
		\begin{scope}[yshift = 2cm, xshift= -0.5cm] 
				\draw[fill] (-1.5,0) circle (3pt);
				\draw[thick] (-0.8,0) circle (.7cm);
				\draw[fill] (-0.1,0) circle (3pt);
				\draw[thick] (0.2,0) circle (.3cm);
				\draw[fill] (0.5,0) circle (3pt);
				\draw[thick] (1,0) circle (.5cm);
				\draw[fill] (1.5,0) circle (3pt);
				\draw[thick] (1.95,0) circle (.45cm);
				\draw[fill] (2.4,0) circle (3pt);
				\draw[thick] (2.8,0) circle (.4cm);
				\draw[fill] (3.2,0) circle (2.5pt);
				\draw[thick] (3.5,0) circle (.3cm);
				\draw[fill] (3.8,0) circle (2.5pt);
				\draw[thick] (4,0) circle (.2cm);
				\draw[fill] (4.2,0) circle (2pt);
				\draw[thick] (4.35,0) circle (.15cm);
				\draw[fill] (4.5,0) circle (2pt);
				\draw[thick] (4.6,0) circle (.1cm);
				\draw[fill] (4.7,0) circle (1pt);
				\draw[thick] (4.75,0) circle (.05cm);
				\draw[fill] (4.8,0) circle (1pt);
				\draw[thick] (4.825,0) circle (.025cm);
				
				\draw[thick] (4.875,0) ellipse (.025cm and .02cm);
				\draw[thick] (4.925,0) ellipse (.025cm and .015cm);
				\draw[thick] (4.975,0) ellipse (.025cm and .01cm);						
				\draw[thick, fill = gray!20] (5,0) circle (3pt);
		
		\end{scope}

		\begin{scope}
			\foreach \x in {-1,0,3}
				{
				\draw[fill] ({\x - .5},0) circle (3pt);
				\draw[fill] ({\x + .5},0) circle (3pt);
				\draw[thick] (\x,0) circle (.5cm);
				}
			\draw[thick] (0.8,0) circle (.3cm);
			\draw[fill] (1.1,0) circle (3pt);
			\draw[thick] (1.8,0) circle (.7cm);
			\draw[thick, fill = gray!20] (-1.5,0) circle (3pt);
		\end{scope}

		\begin{scope}[yshift = -2cm]
			\foreach \x in {-1}
				{
				\draw[fill] ({\x - .5},0) circle (3pt);
				\draw[fill] ({\x + .5},0) circle (3pt);
				\draw[thick] (\x,0) circle (.5cm);
				}
				\draw[thick] (0,0) circle (.5cm);
				\draw[fill] (0.5,0) circle (3pt);
				\draw[thick] (1.1,0) circle (.6cm);
				\draw[fill] (1.7,0) circle (2.5pt);
				\draw[thick] (2,0) circle (.3cm);
				\draw[fill] (2.3,0) circle (2.5pt);
				\draw[thick] (2.5,0) circle (.2cm);
				\draw[fill] (2.7,0) circle (2pt);
				\draw[thick] (2.85,0) circle (.15cm);
				\draw[fill] (3,0) circle (2pt);
				\draw[thick] (3.1,0) circle (.1cm);
				\draw[fill] (3.2,0) circle (1pt);
				\draw[thick] (3.25,0) circle (.05cm);
				\draw[fill] (3.3,0) circle (1pt);
				\draw[thick] (3.325,0) circle (.025cm);
				
				\draw[thick] (3.375,0) ellipse (.025cm and .02cm);
				\draw[thick] (3.425,0) ellipse (.025cm and .015cm);
				\draw[thick] (3.475,0) ellipse (.025cm and .01cm);	
			\draw[thick, fill = gray!20] (-1.5,0) circle (3pt);
		\end{scope}

		\begin{scope}[yshift = -4cm, xshift = 0.5cm]
				\draw[thick] (0,0) circle (.5cm);
				\draw[fill] (0.5,0) circle (3pt);
				\draw[thick] (1.1,0) circle (.6cm);
				\draw[fill] (1.7,0) circle (2.5pt);
				\draw[thick] (2,0) circle (.3cm);
				\draw[fill] (2.3,0) circle (2.5pt);
				\draw[thick] (2.5,0) circle (.2cm);
				\draw[fill] (2.7,0) circle (2pt);
				\draw[thick] (2.85,0) circle (.15cm);
				\draw[fill] (3,0) circle (2pt);
				\draw[thick] (3.1,0) circle (.1cm);
				\draw[fill] (3.2,0) circle (1pt);
				\draw[thick] (3.25,0) circle (.05cm);
				\draw[fill] (3.3,0) circle (1pt);
				\draw[thick] (3.325,0) circle (.025cm);
				
				\draw[thick] (3.375,0) ellipse (.025cm and .02cm);
				\draw[thick] (3.425,0) ellipse (.025cm and .015cm);
				\draw[thick] (3.475,0) ellipse (.025cm and .01cm);

				\draw[thick] (-0.75,0) circle(.25cm);
				\draw[fill] (-0.5,0) circle (3pt);
				\draw[thick] (-1.25,0) circle(.25cm);
				\draw[fill] (-1,0) circle (3pt);
				\draw[fill] (-1.5,0) circle (2pt);
				\draw[thick] (-1.6,0) circle (.1cm);
				\draw[fill] (-1.7,0) circle (1pt);
				\draw[thick] (-1.75,0) circle (.05cm);
				\draw[fill] (-1.8,0) circle (1pt);
				\draw[thick] (-1.825,0) circle (.025cm);	

				\draw[thick] (-1.85,0) ellipse (.025cm and .02cm);
				\draw[thick] (-1.9,0) ellipse (.025cm and .015cm);
				\draw[thick] (-1.95,0) ellipse (.025cm and .01cm);	
			\draw[thick, fill = gray!20] (-1.975,0) circle (3pt);
		\end{scope}
	\end{tikzpicture}

	\caption{The four cases where equality in~\eqref{eq:bkkm1} holds.  Dirichlet conditions (at vertices or ends) are coloured light grey, standard/Neumann conditions are depicted as filled black circles.
	(1) In the top graph the Friedrichs realisation leads to a formal Dirichlet condition at the end on the right side of the infinite necklace; (2) the second graph is a compact necklace with a Dirichlet condition at one end and a standard condition at the other; (3) in the third row there is a Dirichlet vertex of degree two at the finite end of the necklace and a Neumann condition at the other end; (4) the bottom graph has a Dirichlet condition at the end on the left and a Neumann condition at the end on the right.}
	\label{fig:caterpillar}
\end{figure}

The topological assumption on $\mGraph$ in Theorem~\ref{TheoremBKKM1} is equivalent to requiring that the metric space obtained from $\mGraph$ upon identifying all points in $\Dir$, must be doubly connected.

The proofs will make use of the following technical result, which is a consequence of the coarea formula (cf.\ \cite[Proof of Lemma~3]{Fri05}, \cite[Proof of Theorem~2.1]{BanLev17} and \cite[Proof of Theorem~3.4]{BerKenKur17}).
Notationally, given a graph $\mGraph$ and a measurable function $f: \mGraph \to \R$, we denote by 
\[
S_t = S_t(f) := \{ x \in \mGraph : f(x) = t\} \subset \mGraph
\] 
the level surface of $f$ and by 
\[
m_f (t) := | \{x \in \mGraph: f(x) < t \}|
\]
the Lebesgue measure of its sublevel set, for any $t \in \R$.

\begin{lemma}\label{LemmaCoarea}
Let $\mGraph$ be a locally finite metric graph of finite volume, and let $f \in H^1(\mGraph) \cap \bigoplus_{\me\in\mE}C^1([0,\ell_\me])$. Then, with the notation just introduced, $m_f: \R \to \R$ is absolutely continuous; moreover, for almost every $t \in \R$, $S_t$ has finite cardinality and
\begin{equation}\label{eq:mfrate}
	m_f'(t) = \sum_{x \in S_t} \frac{1}{|f'(x)|};
\end{equation}
finally,
\begin{equation}\label{eq:ourcoarea}
	\int_\mGraph |f'(x)|^2\,\ud x = \int_\R \sum_{x \in S_t} |f'(x)|\, \mathrm{d}t.
\end{equation}
\end{lemma}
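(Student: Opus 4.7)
My approach is to reduce the claim to the classical one-dimensional coarea formula on each edge and then aggregate across the countably many edges via Tonelli's theorem, using the finite total length to control the resulting sums.

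First, on each edge $\me$, since $f_\me \in C^1([0,\ell_\me])$, Sard's theorem gives that the critical-value set $f_\me(\{f_\me'=0\}) \subset \R$ has Lebesgue measure zero. For any non-negative Borel measurable $\phi: \me \to \R$, the classical one-dimensional coarea formula reads
\[
	\int_\me \phi(x) |f'(x)|\,\mathrm{d}x = \int_\R \sum_{x \in S_{t,\me}} \phi(x)\,\mathrm{d}t,
\]
where $S_{t,\me} := S_t \cap \me$; moreover, at every regular value $t$ of $f_\me$ (so for a.e.\ $t$), the set $S_{t,\me}$ is finite and the inverse function theorem yields $m_{f_\me}'(t) = \sum_{x \in S_{t,\me}} 1/|f'(x)|$.

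Summing over $\me$ via Tonelli (legitimate because all integrands are non-negative) produces
\[
	\int_\mGraph \phi(x) |f'(x)|\,\mathrm{d}x = \int_\R \sum_{x \in S_t} \phi(x)\,\mathrm{d}t
\]
for every non-negative Borel measurable $\phi: \mGraph \to \R$. Taking $\phi = |f'|$ immediately gives \eqref{eq:ourcoarea}; taking $\phi \equiv 1$ and using Cauchy--Schwarz together with $L(\mGraph) < \infty$,
\[
	\int_\R \#(S_t)\,\mathrm{d}t = \int_\mGraph |f'(x)|\,\mathrm{d}x \leq L(\mGraph)^{1/2}\|f'\|_{L^2(\mGraph)} < \infty,
\]
so $\#(S_t) < \infty$ for almost every $t \in \R$.

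For absolute continuity and \eqref{eq:mfrate}, observe that the union over $\me \in \mE$ of edge-wise critical-value sets remains a countable union of Lebesgue null sets, hence null; outside this set, $t$ is a simultaneous regular value of every $f_\me$. A further Tonelli argument then yields, for regular $s < t$,
\[
	m_f(t) - m_f(s) = \sum_\me \int_s^t \sum_{x \in S_{u,\me}} \frac{1}{|f'(x)|}\,\mathrm{d}u = \int_s^t \sum_{x \in S_u} \frac{1}{|f'(x)|}\,\mathrm{d}u,
\]
which gives \eqref{eq:mfrate} and, since $m_f$ is a bounded monotone function whose increments are realised by the integral of a non-negative integrable density, also its absolute continuity. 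I expect the main technical subtlety to be handling possible non-trivial sub-intervals on which $f_\me$ is constant: such a flat piece would produce a jump of the individual $m_{f_\me}$ at the corresponding critical value, but one verifies that these jumps all sit inside the (measure-zero) set of critical values of $f$ and therefore do not affect the integral representation above.
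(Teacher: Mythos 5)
Your proof follows essentially the same route as the paper's: the paper first isolates the edge-wise coarea formula (its Lemma~\ref{lem:true-coarea}), discards the countable set of exceptional values via Sard's theorem applied on each edge, and sums over the countably many edges by monotone convergence — exactly your Tonelli aggregation — then obtains \eqref{eq:ourcoarea} by taking $\varphi=|f'|$, the a.e.\ finiteness of $S_t$ by taking $\varphi=1$ together with the finite total length, and \eqref{eq:mfrate} by summing the edge-wise derivative identities over the finitely many points of $S_t$. The one place where your argument goes astray is the closing remark about flat pieces: if $f_\me$ is constant, equal to some value $c$, on a subinterval of positive length, then $m_f$ has a genuine jump at $t=c$, and a jump of a monotone function is \emph{not} recovered by integrating its pointwise derivative, no matter that the set of critical values is Lebesgue null; so your identity $m_f(t)-m_f(s)=\int_s^t\sum_{x\in S_u}|f'(x)|^{-1}\,\ud u$ fails whenever $(s,t)$ contains such a level, and with it the claimed absolute continuity. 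This is not really a defect of your argument relative to the paper's — the paper's proof asserts absolute continuity just as tersely and shares the same gap, and the lemma implicitly requires $f$ to be non-constant on every set of positive measure (which is arranged in all the later applications to eigenfunctions, e.g.\ by shortening the graph so that the eigenfunction vanishes only on a null set) — but the step ``one verifies that these jumps do not affect the integral representation'' cannot be carried out as written and should instead be replaced by an explicit hypothesis or reduction excluding flat pieces.
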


Since the key ingredient of the proof is the coarea formula, which to the best of our knowledge has not been explicitly formulated for infinite metric graphs, we first give a version of the latter; we remark that, at least in the case of compact graphs, a generalisation of the following result to functions of bounded variations was recently given  in~\cite{Maz23}.

\begin{lemma}[Coarea formula for infinite graphs]
\label{lem:true-coarea}
Let $\mGraph$ be any metric graph on a countable edge set, let $f \in C(\mGraph) \cap \bigoplus_{\me\in\mE}C^1([0,\ell_\me])$ and let $\varphi \in L^1_{\textrm{loc}} (\mGraph)$ be nonnegative.
Then, with the notation introduced above,
\begin{equation}
\label{eq:truecoarea}
	\int_\mGraph \varphi(x) |f'(x)|\,\ud x = \int_{\R} \sum_{x \in S_t} \varphi(x)\,\ud t.
\end{equation}
\end{lemma}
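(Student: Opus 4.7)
The plan is to reduce the formula to the classical one-dimensional area/coarea formula applied edge by edge, and then to interchange the summation over edges with the $t$-integration via Tonelli's theorem. Since the metric graph $\mGraph$, viewed as a measure space, is the disjoint union of its open edges together with the null set of vertices, this reduction is natural; the only subtlety will lie in bookkeeping at vertices.

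First I would fix an edge $\me$, identified with $[0,\ell_\me]$. The restriction $f|_\me$ lies in $C^1([0,\ell_\me])$, and $\varphi|_\me \in L^1(\me)$ since $\me$ is relatively compact and $\varphi \in L^1_{\textrm{loc}}(\mGraph)$. The classical area formula for Lipschitz maps from an interval to $\R$ (see e.g.\ Federer or Evans--Gariepy) then gives
\[
\int_0^{\ell_\me} \varphi(x)|f'(x)|\,\ud x = \int_\R \sum_{x \in S_t \cap \me^\circ} \varphi(x)\,\ud t,
\]
where $\me^\circ$ denotes the open edge and the inner ``sum'' is understood as integration against the $0$-dimensional Hausdorff measure on the level set (which is finite for almost every $t$).

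Next I would sum the identity above over $\me \in \mE$. Because every summand is nonnegative, Tonelli's theorem permits the interchange
\[
\sum_{\me \in \mE} \int_\R \sum_{x \in S_t \cap \me^\circ} \varphi(x)\,\ud t = \int_\R \Bigl( \sum_{\me \in \mE} \sum_{x \in S_t \cap \me^\circ} \varphi(x)\Bigr)\,\ud t,
\]
and the left-hand sums reassemble into $\int_\mGraph \varphi(x)|f'(x)|\,\ud x$ by definition of the integral on the disjoint union of edges (the vertex set being Lebesgue-null). It remains to identify the bracketed iterated sum with $\sum_{x \in S_t}\varphi(x)$ for almost every $t$. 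The only possible discrepancy comes from level sets meeting the vertex set $\mV$: since $\mV$ is countable, the set $T := f(\mV) \subset \R$ is at most countable, hence Lebesgue-null, and for $t \notin T$ one has $S_t \subset \bigsqcup_\me \me^\circ$, so the iterated edge-wise sum coincides pointwise with $\sum_{x \in S_t}\varphi(x)$. Discarding $T$ from the outer integration yields \eqref{eq:truecoarea}.

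The main obstacle I anticipate is the interpretation of pointwise values of $\varphi$ on level sets, since $\varphi$ is only an $L^1_{\textrm{loc}}$-equivalence class; this is handled implicitly by the edge-wise area formula, whose statement uses a measurable representative, and the countable union of vertex values contributes only a null set of ``bad'' parameters $t$. All other steps are then routine applications of $\sigma$-additivity and Tonelli for nonnegative functions.
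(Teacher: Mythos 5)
Your proposal is correct and follows essentially the same route as the paper: both reduce to the classical one-dimensional coarea formula applied edge by edge and then interchange the edge sum with the $t$-integral (your Tonelli step is the paper's monotone-convergence step for the partial sums over finite unions of edges), discarding the countable, hence Lebesgue-null, set of values $t$ attained at vertices. Your explicit remark about the choice of a measurable representative of $\varphi$ on the (null) level sets is a welcome clarification that the paper leaves implicit.
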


\begin{proof}

Fix $f \in C(\mGraph) \cap \bigoplus_{\me\in\mE}C^1([0,\ell_\me])$. Call a value $t \in \R$ exceptional if $S_t \cap \mV \neq \emptyset$ or $f'(x)=0$ for some $x \in S_t$. Then by Sard's theorem~\cite{Sar42} the set of exceptional values of $f|_\me$ is countable for all $\me \in \mE$ (and hence the set of exceptional values of $f$ on $\mGraph$ is also countable); and for fixed $\varphi$, \eqref{eq:truecoarea} holds on each edge of $\mGraph$. Thus it also holds on any finite union of edges; taking a sequence of finite subgraphs which exhaust $\mGraph$ and passing to the limit using the monotone convergence theorem yields \eqref{eq:truecoarea} for $f$ and $\varphi$ on $\mGraph$.
\end{proof}

\begin{proof}[Proof of Lemma~\ref{LemmaCoarea}]
Take $f$ as in the statement of the lemma and apply \eqref{eq:truecoarea} to $f$ (with $\varphi = f$ as well); this immediately yields \eqref{eq:ourcoarea}. Since under our current assumptions $\mGraph$ has finite volume, we also have $f \in W^{1,1} (\mGraph)$. Repeating the argument but taking $\varphi = 1$ shows that
\begin{displaymath}
	\int_\R \sum_{x \in S_t} 1 \,dt = \int_\mGraph |f'(x)|\,\ud x < \infty,
\end{displaymath}
which in particular implies that $S_t$ is finite for almost all $t \in \R$. We now restrict to the set of regular values $t \in \mathfrak{R}(f)$ for which additionally $S_t$ is finite; this possibly smaller set still has full measure in $\R$. Now by construction, for all such $t$ we have
\begin{displaymath}
	S_t = \bigcup_{i \in I} S_t \cap \me_i, \qquad m_f (t) = \sum_{i \in I} m_{f_i} := \sum_{i \in I} |\{ x \in \me_i : f(x) < t \}|,
\end{displaymath}
and
\begin{displaymath}
	m_f' (t) = \sum_{i \in I} m_{f_i}'(t) = \sum_{i \in I} \sum_{x \in S_t \cap \me_i} \frac{1}{|f'(x)|} = \sum_{x \in S_t} \frac{1}{|f'(x)|},
\end{displaymath}
where the finiteness of $S_t$ implies the finiteness of all the sums involved, and hence the validity of the identities, as well as the absolute continuity of $m_f$. This completes the proof.
\end{proof}

With the help of Lemma~\ref{LemmaCoarea}, the key symmetrisation argument used in the compact case can be generalised directly to infinite graphs. For this we need to introduce some more notation: given a graph $\mGraph$ of volume $|\mGraph|>0 $ and a function $f \in H^1 (\mGraph) \hookrightarrow C(\overline{\mGraph}) \hookrightarrow L^\infty (\mGraph)$, we define its symmetrisation (more precisely: its decreasing rearrangement) $f^\ast \in C([0,L])$ via the level set property
\begin{displaymath}
	|\{ x \in (0,L) : f^\ast (x) < t \}| = m_f (t) \qquad \text{for all } t \in \R
\end{displaymath}
(and extension by continuity to $x=0,L$). Taking $f \in H^1 (\mGraph)$ to be fixed, we also set $n(t) := \# S_t = \#\{ x \in \mGraph: f(x) = t\}$, which is a nonnegative integer for almost all $t \in \R$ by Lemma~\ref{LemmaCoarea}; and in fact is (again, for almost all $t \in \R$) at least 2 under the assumptions of Theorems~\ref{TheoremBandLevy} and~\ref{TheoremBKKM1}, by an argument essentially based on Menger's Theorem as in~\cite{BanLev17,BerKenKur17}. The following result is now standard; its proof, using the properties established in Lemma~\ref{LemmaCoarea}, follows the proof of, e.g., \cite[Theorem~2.1]{BanLev17} essentially verbatim (see in particular Eq.\ (3.13) there). We therefore omit it.

\begin{lemma}\label{LemmaSymmetrisation}
Let $\mGraph$ be a locally finite, connected metric graph of volume $|\mGraph| >0 $, let $f \in H^1(\mGraph) \cap \bigoplus_{\me\in\mE}C^1([0,\ell_\me])$, with minimum $m$ and maximum $M \geq m$ in $\overline{\mGraph}$, respectively, and let $f^\ast$ be its increasing rearrangement, as just described. Then $f^\ast \in H^1 (0,L)$,
\begin{equation}\label{eq:cavalieri}
	\int_\mGraph |f(x)|^2\,\ud x = \int_0^L |f^\ast (x)|^2\,\ud x,
\end{equation}
and
\begin{equation}\label{eq:symmetrisation}
	\int_\mGraph |f'(x)|^2\,\ud x \geq \essinf\limits_{t \in [m,M]} n(t)^2 \int_0^L |(f^\ast)'(x)|^2\,\ud x.
\end{equation}
Equality in \eqref{eq:symmetrisation} implies that $f'(x)$ takes on a common value at all $x\in S_t$, for almost all $t \in [m,M]$.
\end{lemma}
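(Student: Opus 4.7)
The plan is to follow the classical symmetrisation strategy, with the infinite-graph coarea identities of Lemma~\ref{LemmaCoarea} doing the heavy lifting where the standard coarea formula would in the compact case. Throughout, one restricts to the set of regular values $t \in \mathfrak{R}(f) \cap [m,M]$ (of full measure there) on which $S_t$ is finite and \eqref{eq:mfrate} holds.

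First I would verify that $f^* \in H^1(0,L)$ and that \eqref{eq:cavalieri} holds. Since $m_f$ is absolutely continuous (by Lemma~\ref{LemmaCoarea}), non-decreasing, with $m_f(t)=0$ for $t \leq m$ and $m_f(t)=L$ for $t \geq M$, the rearrangement $f^*$ can be defined as a suitable (right-continuous, generalised) inverse of $m_f$. It is then monotone and absolutely continuous on $[0,L]$, hence in $H^1(0,L)$; equimeasurability \eqref{eq:cavalieri} then follows from the layer-cake representation applied to both $f$ and $f^*$.

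For \eqref{eq:symmetrisation}, apply \eqref{eq:ourcoarea} to $f$ and combine it with a pointwise Cauchy--Schwarz bound on the counting measure on $S_t$:
\begin{equation*}
    n(t)^2 = \left(\sum_{x\in S_t} 1\right)^2 \leq \left(\sum_{x\in S_t}|f'(x)|\right)\left(\sum_{x\in S_t}\frac{1}{|f'(x)|}\right) = m_f'(t)\sum_{x \in S_t} |f'(x)|,
\end{equation*}
the last equality using \eqref{eq:mfrate}. Rearranging and integrating over $[m,M]$, together with \eqref{eq:ourcoarea}, gives
\begin{equation*}
    \int_\mGraph |f'|^2 \ud x = \int_m^M \sum_{x\in S_t}|f'(x)| \ud t \geq \int_m^M \frac{n(t)^2}{m_f'(t)} \ud t \geq \essinf_{t \in [m,M]} n(t)^2 \int_m^M \frac{\ud t}{m_f'(t)}.
\end{equation*}
A second application of \eqref{eq:ourcoarea}, this time to $f^*$ on $[0,L]$ (whose level sets are a.e.\ singletons), combined with the inverse relation $(f^*)'(y) = 1/m_f'(f^*(y))$ valid a.e., identifies the remaining integral as $\int_0^L |(f^*)'|^2 \ud x$, concluding \eqref{eq:symmetrisation}. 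The equality statement then drops out by inspecting the Cauchy--Schwarz step: equality (at a given $t$) forces $|f'(x)|$ to be constant on $S_t$.

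The main technical obstacle is handling plateau sets of $m_f$ (where $f$ is constant on a set of positive measure), on which the inversion formula must be interpreted carefully; these form a null set in $t$ and contribute nothing to either side of the inequalities above. Beyond this standard subtlety, the argument proceeds as in the proof of \cite[Theorem~2.1]{BanLev17}, the only substantive novelty being the systematic use of Lemma~\ref{LemmaCoarea} in place of the classical coarea formula.
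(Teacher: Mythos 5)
Your argument is correct and is precisely the one the paper intends: the authors omit the proof, stating that it follows \cite[Theorem~2.1]{BanLev17} essentially verbatim with Lemma~\ref{LemmaCoarea} supplying the coarea identities, and your reconstruction (Cauchy--Schwarz on the counting measure over $S_t$ combined with \eqref{eq:mfrate} and \eqref{eq:ourcoarea}, then the change of variables identifying $\int_m^M \ud t/m_f'(t)$ with $\int_0^L |(f^\ast)'|^2\,\ud x$) is exactly that argument, including the correct equality analysis via the Cauchy--Schwarz step. The only cosmetic remark is that absolute continuity of $f^\ast$ alone gives $W^{1,1}$, not $H^1$; membership in $H^1(0,L)$ follows from your own chain of inequalities, since $\int_0^L |(f^\ast)'|^2\,\ud x \le \int_\mGraph |f'|^2\,\ud x < \infty$ because $n(t)\ge 1$ a.e.\ on $(m,M)$.
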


As a consequence we obtain the functional inequality of Friedlander \cite[Lemma~3]{Fri05} for infinite graphs. We set this up to work simultaneously for both $H^1$-functions vanishing at at least one point in $\mGraph$ and functions in $H^1_0 (\mGraph)$; the result for the latter will immediately imply Theorem~\ref{estimate-friedr-nic}.
Recall that $ H_0^1(\mGraph;\{y\}) \subset  H^1(\mGraph) $ denotes the subspace of $H^1$-functions that vanish at $ y $, whether $y$ is a vertex or an end.

\begin{lemma}\label{UnendlichFried}
Let $ \mGraph $ be a locally finite, connected metric graph with finite volume $|\mGraph| >0 $ and let $ y \in \overline{\mGraph} $.  
Then
\begin{equation}\label{friedlanderlemma1}
	\int_{\mGraph} |f'(x)|^2\, \ud x \geq  \frac{\pi^2}{4 |\mGraph|^2}\int_{\mGraph} |f(x)|^2 \,\ud x
\end{equation}
for all $ f \in H_0^1(\mGraph;\{y\}) \cap \bigoplus_{\me\in\mE}C^1([0,\ell_\me]) $. 
In either case, equality in \eqref{friedlanderlemma1} can occur for a nonzero function $f$ if and only if 
$ \mGraph $ is isometrically isomorphic to an interval, with $y$ being one of its endpoints,
and $ f $ is proportional to $ \sin(\pi s/2 |\mGraph|) $, where $ s $ is the distance to $ y $.
\end{lemma}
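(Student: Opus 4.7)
The plan is to follow Friedlander's symmetrisation strategy, now implemented on infinite graphs via Lemma~\ref{LemmaSymmetrisation}, and reduce the inequality to the classical one-dimensional mixed Dirichlet--Neumann Poincar\'e inequality on an interval. Given a piecewise $C^1$ function $f$ with $f(y)=0$, set $m:=\min_{\overline{\mGraph}} f$ and $M:=\max_{\overline{\mGraph}} f$; then $m\le 0\le M$. Since $\mGraph$ is connected and $f\in C(\overline{\mGraph})$ by Lemma~\ref{Lemma32}, applying the intermediate value theorem along a path joining $y$ to an extremal point shows that every value in $[m,M]$ is taken by $f$, so $n(t):=\#S_t\ge 1$ for almost every $t\in(m,M)$. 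Let $f^\ast\colon[0,L]\to[m,M]$ be the increasing rearrangement from Lemma~\ref{LemmaSymmetrisation}; then \eqref{eq:symmetrisation} gives
\begin{equation*}
\int_\mGraph |f'(x)|^2\,\ud x \;\ge\; \int_0^L |(f^\ast)'(s)|^2\,\ud s,
\end{equation*}
while \eqref{eq:cavalieri} preserves the $L^2$-norm.

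Next, since $f^\ast$ is continuous and nondecreasing with $f^\ast(0)=m\le 0\le M=f^\ast(L)$, there exists $s_0\in[0,L]$ with $f^\ast(s_0)=0$. I would then split $[0,L]$ at $s_0$ and apply the standard one-dimensional mixed Dirichlet--Neumann Poincar\'e inequality on each half (Dirichlet at $s_0$):
\begin{equation*}
\int_0^{s_0} |(f^\ast)'|^2\,\ud s \;\ge\; \frac{\pi^2}{4 s_0^2}\int_0^{s_0}|f^\ast|^2\,\ud s, \qquad \int_{s_0}^L |(f^\ast)'|^2\,\ud s \;\ge\; \frac{\pi^2}{4(L-s_0)^2}\int_{s_0}^L|f^\ast|^2\,\ud s.
\end{equation*}
Both constants dominate $\pi^2/(4L^2)$; summing and combining with the previous step yields \eqref{friedlanderlemma1}.

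For the equality analysis, equality in the split Poincar\'e step forces $s_0\in\{0,L\}$, meaning $f$ does not change sign (without loss of generality $f\ge 0$ and $f(y)=m=0$), while equality in the one-sided Dirichlet Poincar\'e inequality on the whole of $[0,L]$ forces $f^\ast(s)=c\sin(\pi s/(2L))$ up to reflection. Simultaneously, equality in \eqref{eq:symmetrisation} combined with $n(t)\ge 1$ a.e.\ forces $n(t)=1$ for almost every $t\in(0,M)$, and $|f'|$ constant on each level set $S_t$.

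The main obstacle will be to deduce from $n(t)=1$ a.e.\ that $\mGraph$ is isometrically isomorphic to an interval with $y$ at one of its endpoints. The idea is that strict edgewise monotonicity of $f$ (which follows from $n(t)=1$ a.e., ruling out intervals of constancy) together with continuity at a vertex $\mv$ of degree $\ge 3$ would force at least two incident edges along which $f$ leaves $f(\mv)$ in the same direction, producing a nondegenerate interval of values $t$ on which $n(t)\ge 2$, a contradiction. Hence every vertex has degree at most $2$; after deleting dummy vertices, $\mGraph$ is a (possibly half-open) interval of length $L$. Continuity of $f^\ast$ with $f^\ast(0)=0$ then forces $y$ to sit at the endpoint $s=0$, and the sine profile of $f^\ast$ transfers back to $f(x)=c\sin(\pi\dist_{\mGraph}(x,y)/(2L))$. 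The converse implication, that on such an interval this sine saturates \eqref{friedlanderlemma1}, reduces to a direct one-dimensional computation.
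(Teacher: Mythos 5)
Your argument is correct and follows essentially the same route as the paper's proof: reduce \eqref{friedlanderlemma1} to the one-dimensional mixed Dirichlet--Neumann Poincar\'e inequality via the rearrangement machinery of Lemma~\ref{LemmaSymmetrisation}, using that connectedness forces $n(t)\ge 1$. The one real difference is at the start: the paper first replaces $f$ by $|f|$, so that $f^\ast(0)=0$ and the inequality $\int_0^L|(f^\ast)'|^2\,\ud s\ge \tfrac{\pi^2}{4L^2}\int_0^L|f^\ast|^2\,\ud s$ applies on all of $[0,L]$ at once, whereas you keep the sign of $f$ and split $[0,L]$ at a zero $s_0$ of $f^\ast$. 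Your variant avoids the issue that $|f|$ need not be edgewise $C^1$ at interior sign changes (a point the paper glosses over), at the cost of the extra splitting step; the resulting equality analysis is the same. Two small points in your rigidity discussion should be tightened. First, the absence of intervals of constancy does \emph{not} follow from $n(t)=1$ a.e.\ alone: a function that is constant on an initial segment of an interval and strictly increasing thereafter has $n(t)=1$ for a.e.\ $t$. It does follow from the fact, which you have already established, that $f^\ast$ is a strictly monotone sine profile, so that every level set of $f$ has measure zero. Second, your pigeonhole argument at vertices of degree at least $3$ only shows that all degrees are at most $2$, which leaves open the possibility that $\mGraph$ is a cycle; this is excluded, for instance, by noting that near an interior point where $f$ attains its maximum one gets $n(t)\ge 2$ on a set of positive measure. (The paper's own ``standard continuity argument'' is no more detailed on this last point.)
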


\begin{proof}
In either case, by replacing $f$ by $|f|$ we may assume that $f$ is nonnegative. 
If $y$ is a vertex, then $f \in H_0^1(\mGraph;\{y\})$; since $f$ is continuous and has a zero in $\mGraph$, $f^\ast (0) = 0$. It follows from Lemma~\ref{LemmaSymmetrisation}, the fact that $n(t) \geq 1$ for all $t \in (m,M)$ by continuity of $f$, and the usual one-dimensional inequality (cf.\ \cite[Eq.\ (2.7)]{Fri05}) that
\begin{equation}\label{eq:friedlander-intermediate}
	\int_\mGraph |f'(x)|^2\,\ud x \geq \int_0^L |(f^\ast)'(x)|^2\,\ud x \geq \frac{\pi^2}{4|\mGraph|^2}\int_0^L |f^\ast(x)|^2 \,\ud x = \frac{\pi^2}{4|\mGraph|^2}\int_{\mGraph} |f(x)|^2 \,\ud x.
\end{equation}
If $y$ is an end, then $f(x) \to 0$ at the end $y$, necessarily $\essinf_{x \in \mGraph} f(x) = 0$ and so $f^\ast (0) = 0$, from which \eqref{eq:friedlander-intermediate} follows.

If there is equality in \eqref{friedlanderlemma1}, and hence in every step of \eqref{eq:friedlander-intermediate}, then $n(t)=1$ for almost all $t \in (m,M) = (0,M)$ and hence for \emph{all} $t \in (0,M)$; moreover, since up to scalar multiples and reflections $f^\ast (x) = \sin(\pi x/2 L)$, $f$ can only vanish on a set of measure zero, and hence at most at a single point. That is, $f$ can take on any value between $0$ and its maximum on $\overline{\mGraph}$ only once. A standard continuity argument shows that $\mGraph$ must be a path graph, with $f$ vanishing at exactly one of its endpoints. The fact that $f$, considered as a function on $[0,L]$, minimises the Rayleigh quotient of the second derivative on $(0,L)$ with a Dirichlet condition at $0$, means that it must be proportional to the first eigenfunction $\sin (\frac{\pi}{2|\Graph|}\, \cdot \,)$.
\end{proof}

We can now give the proof of Theorems~\ref{estimate-friedr-nic},~\ref{TheoremBandLevy}  and~\ref{TheoremBKKM1}; the proof of Theorem~\ref{FriedlanderTh1Gl1} requires a further intermediate result and will be given afterwards.

\begin{proof}[Proof of Theorem~\ref{estimate-friedr-nic}]
This now follows immediately from Lemma~\ref{UnendlichFried} applied to the first eigenfunction of $\mathcal{H}_{\Dir}$, together with the variational characterisation \eqref{eq:varchar-lambda1-mix} of $\lambda_1 (\mGraph;\Dir)$.
\end{proof}

\begin{proof}[Proof of Theorem~\ref{TheoremBandLevy}]
Let $\psi_2 \in H^1 (\mGraph)$ be any eigenfunction for $\mu_2 (\mGraph)$, then it necessarily changes sign on $\mGraph$, being orthogonal to the constant functions. We next claim that, if $m = \min_{x \in \overline{\mGraph}} \psi_2 (x)$ and $M = \max_{x \in \overline{\mGraph}} \psi_2 (x)$, then $n(t) = 2$ for almost all $t \in (m,M)$: indeed, choose $x_n,y_n \in \mGraph$ such that $\psi_2(x_n)=:m_n \to m$ and $\psi_2 (y_n)=:M_n \to M$. Fix $n \in \N$. Then the assumption that $\mGraph$ is doubly connected means that there are two paths $P_1$ and $P_2$ connecting $x_n$ and $y_n$, intersecting at a null set of vertices. Since $\psi_2$ is certainly continuous, it takes on every value between $m_n$ and $M_n$ at least once on each paths; hence $n(t) = 2$ for all $t \in (m_n,M_n)$ except possibly at the finite set of values $\psi_2$ takes at the vertices. Now let $n \to \infty$; since the countable union of finite sets is certainly a null set, we have proved the claim.

We next let $\mGraph_0$ be any \emph{nodal domain} of $\psi_2$, that is, the closure of a connected component of $\{x \in \mGraph : \psi_2 \neq 0 \}$. Let $\eta \in H^1 (\mGraph)$ be the restriction of $\psi_2$ to $\mGraph_0$, extended by zero on $\mGraph \setminus \mGraph_0$; then taking $\eta$ as a test function in the weak formulation of the eigenvalue problem
\begin{displaymath}
	\int_\mGraph \psi_2'(x) v'(x) \,\ud x = \mu_2 (\mGraph) \int_\mGraph \psi_2 (x)v(x)\,\ud x \qquad \text{for all } v \in H^1 (\mGraph)
\end{displaymath}
leads to
\begin{equation}\label{eq:nodaldomainproperty}
	\mu_2 (\mGraph) = \frac{\int_{\mGraph} |\eta'(x)|^2\,\ud x}{\int_\mGraph |\eta (x)|^2\,\ud x} = \frac{\int_{\mGraph_0} |\psi_2'(x)|^2\,\ud x}{\int_{\mGraph_0} |\psi_2(x)|^2\,\ud x},
\end{equation}
just as holds in the compact case. Using that $\psi_2(x) = 0$ for at least one $x \in \mGraph_0$ and that $n(t)=2$ for a.e.\ $t\in [0,\max \psi_2]$, Lemma~\ref{LemmaSymmetrisation} together with an argument analogous to \eqref{eq:friedlander-intermediate} implies that
\begin{displaymath}
	\mu_2 (\mGraph) \geq \frac{\pi^2}{|\mGraph_0|^2}.
\end{displaymath}
Now this holds for every nodal domain $\mGraph_0$. Since there are at least two, as noted above, at least one has volume at most $|\mGraph|/2$, which leads to the estimate $\mu_2 (\mGraph) \geq 4\pi^2/|\mGraph|^2$.

To conclude the proof, let us observe that equality in \eqref{eq:bandlevy} clearly holds if $\mathcal{G}$ is a symmetric necklace, as it becomes clear considering the Rayleigh quotient of such a graph, which by homogeneity reduces to that of an individual interval of half length.

Conversely, let equality in \eqref{eq:bandlevy} hold for a given eigenfunction $\psi_2$; up to shortening $\Graph$ we can without loss of generality assume $\psi_2$ to vanish only on a Lebesgue zero set.
Then the support $\mathcal{G}_\pm$ of the positive/negative part of $\psi_2$ satisfies  $|\mathcal{G}_\pm|=|\mGraph|/2$. 
Now, because the increasing rearrangement $\psi_2^*$ of $\psi_2$ satisfies
\begin{displaymath}
	\frac{\int_{\mathcal{G}_+} |\psi_2'|^2\ud x}{\int_{\mathcal{G}_+} |\psi_2|^2 \ud x}
	= \frac{\int_{\mathcal{G}_+^\ast} |(\psi_2^\ast)'|^2\ud x}{\int_{\mathcal{G}_+^\ast}|\psi_2^\ast|^2\ud x},
\end{displaymath}
we deduce as above that $n(t)=\eta=2$ for a.e.\ $t\in [0,\max \psi]$.
Thus, up to a null set, $\mathcal{G}$ must
consist of two paths representing the pre-images of the set $(\min \psi_2,\max\psi_2)$. This is only possible if $\mathcal{G}$ is a (possibly degenerate) symmetric necklace.
Furthermore, any two parallel edges must have equal length: as we have already remarked in Lemma~\ref{LemmaSymmetrisation}, because we are assuming equality in~\eqref{eq:symmetrisation} we necessarily have $|\psi'_2(x)| = |\psi'_2(y)|$ for any $x$ and $y$ in the same level set $\psi_2^{-1}(t)$. We conclude that $\psi_2$ is identical along the two paths, hence the paths have the same length.
\end{proof}

\begin{proof}[Proof of Theorem~\ref{TheoremBKKM1}]
Since $\Graph$ has finite volume, $\mathcal H_{\Dir}$ has compact resolvent and there exists a positive ground state $\psi_1 \in H^1_0(\mGraph) \cap C(\overline \mGraph) \cap \oplus_{\me \in \mE} C^1([0, \ell_\me])$. 
In particular, the function $\psi_1$ can only vanish at $\Dir$, where it attains its minimum $m$.
Therefore, $\psi_1$ is an $H^1(\mGraph)$-function which takes all values in $[0,M]$ in $\overline \mGraph$, where $M := \max_{x \in \overline \mGraph} \psi_1(x)$.
Lemma~\ref{LemmaSymmetrisation} implies that the Rayleigh quotient of $\psi_1$ is at least $\essinf_{t \in [0,M]} n(t)^2$ times the Rayleigh quotient of its increasing rearrangement $\psi_1^*$, which in turn is no smaller than $\left( \frac{\pi}{2 |\mGraph|} \right)^2$, that is, the lowest eigenvalue of the Laplacian on $ [0,|\mGraph|]$ with mixed Dirichlet/Neumann boundary conditions. 

Since $\overline \mGraph$ is compact and $\psi_1(x) \to 0$ whenever $x \to \Dir$, every value in $(0,M]$ with $M := \max_{x \in \Graph} \psi_1 (x)$ will be attained at least once by $\psi_1$. 
We claim that every such value will in fact be attained at least twice, with the possible exception of a countable set. 
Indeed, if for some $t \in (0,M]$ there exists exactly one $x$ such that $\psi
_1(x) = t$, then cutting the graph at $x$ disconnects $\Graph$ into an upper and a lower level set of $\psi_1$. 
By our topological assumptions on $\Graph$, this is only possible if $x$ is a vertex, of which there are only countably many.
Thus $\essinf_{t \in [0,M]} n(t) \geq 2$, which concludes the proof of~\eqref{eq:bandlevy}.

To discuss the case of equality in~\eqref{eq:bkkm1}, observe that the symmetrisation process yields equality if and only if $n(t)= 2$ for a.e.\ $t$, which in turn implies that $\Graph$ is a symmetric necklace, with the minimum of $\psi_1$ attained at precisely one of its extremities -- either a vertex or an end, by assumption. Observe that in the former case, the necklace may still be either infinite (and in this case $\mathcal H_{\Dir}$ would be a mixed Friedrichs/Neumann realisation) or finite.
\end{proof}

To complete the proof of Theorem~\ref{FriedlanderTh1Gl1}, we next observe that, as in the compact case, it suffices to consider the case where $\mGraph$ is a (now possibly infinite)  tree; this follows from a basic surgery argument.

\begin{lemma}\label{baumlemma}
Let $ \mGraph $ be a locally finite metric graph of finite volume. If $\mGraph'$ is any graph obtained from $\mGraph$ by cutting through $ \mGraph $ countably many times, then
\begin{displaymath}
	\mu_k (\mGraph) \geq \mu_k(\mGraph')\qquad\hbox{for all }k\in\N.
\end{displaymath}
If $\mGraph'$ is obtained from $\mGraph$ by cutting through $j\in\mathbb N$ times, then
\begin{displaymath}
	\mu_k (\mGraph) \leq \mu_{k+j}(\mGraph')\qquad\hbox{for all }k\in\N.
\end{displaymath}
\end{lemma}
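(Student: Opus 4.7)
The plan is to prove both inequalities by producing an explicit natural inclusion of Sobolev spaces and appealing to the min-max variational characterisation of $\mu_k$.

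For the first inequality, I would start by observing that cutting $\mGraph$ through a point affects only the metric identification at that point while leaving the underlying measure and differential structure on each edge unchanged. Consequently, any $f \in H^1(\mGraph)$ naturally defines a function in $H^1(\mGraph')$: the countable set of cut points has Lebesgue measure zero, so the $L^2$-norm and the Dirichlet integral are unchanged, while the continuity of $f$ across every cut point is automatic (whereas in $\mGraph'$ no such continuity is required). Thus $H^1(\mGraph)$ embeds isometrically as a closed subspace of $H^1(\mGraph')$, and $L^2(\mGraph)$ is canonically identified with $L^2(\mGraph')$. The min-max characterisation of $\mu_k$, taken as an infimum over $k$-dimensional subspaces of the form domain, then immediately yields $\mu_k(\mGraph') \leq \mu_k(\mGraph)$, since on $\mGraph'$ the infimum is taken over a strictly larger family.

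For the second inequality, with $j \in \N$ finite cuts, I would reverse the viewpoint and identify $H^1(\mGraph)$ with the subspace of $H^1(\mGraph')$ consisting of those functions which additionally satisfy a continuity condition across each of the $j$ cut points. Each such condition is a single scalar linear constraint (the value on one side of the cut equals the value on the other), so $H^1(\mGraph)$ has codimension at most $j$ in $H^1(\mGraph')$. A standard application of the Courant--Fischer--Weyl principle then yields $\mu_k(\mGraph) \leq \mu_{k+j}(\mGraph')$: given any $(k+j)$-dimensional subspace $V \subset H^1(\mGraph')$, the intersection $V \cap H^1(\mGraph)$ has dimension at least $k$, so choosing any $k$-dimensional $W \subset V \cap H^1(\mGraph)$ gives $\mu_k(\mGraph) \leq \sup_{0 \neq f \in W} R(f) \leq \sup_{0 \neq f \in V} R(f)$, where $R$ is the Rayleigh quotient; taking the infimum over $V$ then yields the claim. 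This is exactly the interlacing device already used in the proof of Theorem~\ref{TheoremEigenvalueApproximation}(3).

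The main technical point, particularly for part 1, is to rigorously verify that the identification $H^1(\mGraph) \hookrightarrow H^1(\mGraph')$ really is an isometric embedding when the number of cuts is countably infinite. This reduces to edgewise bookkeeping: cutting at an interior point $x$ of an edge $\me$ replaces $\me$ by two sub-edges $\me_1, \me_2$ and identifies $f|_\me \in H^1(\me)$ with $(f|_{\me_1}, f|_{\me_2}) \in H^1(\me_1) \oplus H^1(\me_2)$, with the weak derivative, the $L^2$-norm and the Dirichlet integral all decomposing additively. Iterating this construction over the countable set of cuts and invoking the canonical isomorphism $L^2(\mGraph) \cong L^2(\mGraph')$ (their measures agree off a null set) gives the embedding in full generality, with no hidden convergence issues since the constructions are strictly local in the edge variable.
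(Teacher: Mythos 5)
Your argument is correct and is essentially the paper's: the authors omit the proof, referring to \cite[Theorem~3.4]{BerKenKur19}, whose mechanism is exactly your identification of $H^1(\mGraph)$ as a closed subspace of $H^1(\mGraph')$ of codimension at most one per cut, combined with the min-max principle for the first inequality and the Courant--Fischer interlacing device for the second. The only cosmetic difference is that you phrase cuts as happening at interior points of edges, whereas the cited definition cuts through vertices (an interior-point cut being the special case of first inserting a dummy vertex); the codimension count and both inequalities are unaffected.
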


We refer to~\cite[Definition~3.2]{BerKenKur19} for the definition of cutting through a vertex; we note that the definition of \emph{cuts} originally given for compact metric graphs makes equal sense in the infinite case, since cutting through vertex is a local graph operation and the cutting procedure in Lemma~\ref{baumlemma} is iterative, i.e., we do not need to cut through infinitely many vertices simultaneously. The proof of Lemma \ref{baumlemma} is analogous to the one of \cite[Theorem~3.4]{BerKenKur19} and therefore omitted.

Note that it is always possible to obtain a tree from any locally finite graph $\mGraph$, upon cutting a countable number of times. 
We will thus assume for the rest of the section that $\mGraph$ is a (locally finite, connected) tree. In what follows, we will need the following notation. For an arbitrary point $ x \in \mGraph $ the graph $ \mGraph \backslash \{ x \} $  consists of $ p_x \in \N_0 $ connected components. We denote the closure of these $ p_x$  components by $ \left\{ \mGraph^{1}(x), \hdots \mGraph^{p_x}(x)\right\} $; these graphs are also trees. Inductively, we can remove a finite number of points $ \{x_1, \ldots x_n \} $. The set of the resulting graphs will be denoted by $ \mGraph(x_1, \ldots, x_n) $, and an element of this set with $ k(x_1, \ldots, x_n) $ elements by $ \mGraph^j(x_1, \ldots x_n) $ with $ 1 \leq j \leq k(x_1, \ldots, x_n) $.

The next lemma is a generalisation of \cite[Lemma 4]{Fri05}.

\begin{lemma}\label{fried4unendlich}
Let $ \mGraph $ be a locally finite metric tree with finite volume $|\mGraph| >0 $. For every $ 0 < l < |\mGraph| $  there exists some $ x \in \mGraph $ such that, for the subgraphs $ \left\{ \mGraph^{1}(x), \ldots, \mGraph^{p_x}(x)\right\}$ and an appropriate indexing, we have
\[ |\mGraph^{1}(x)| \leq |\mGraph|-l \quad \text{and} \quad  |\mGraph^{i}(x)| \leq l \qquad \text{for all} \quad  2 \leq i \leq p_x.  \]
\end{lemma}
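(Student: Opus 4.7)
The plan is to adapt Friedlander's ``walking'' strategy from \cite[Lemma~4]{Fri05} for compact trees to the present locally finite setting, using the finiteness of \(L(\mGraph)\) to guarantee termination. Fix \(l \in (0, L)\). Since \(L_1(x) := \max_i L(\mGraph^i(x)) \to L\) as \(x\) approaches any end of \(\mGraph\), one can pick a starting vertex \(x_0 \in \mV\) with \(L_1(x_0) \geq l\). From \(x_0\), I would construct a walk \(\gamma\) on \(\mGraph\) by moving, at each vertex visited, into the heaviest subtree not yet traversed. Since \(L(\mGraph) < \infty\), the total arclength of \(\gamma\) is at most \(L\).

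Along \(\gamma\), I would track the forward length \(f(t)\) --- the length of the component of \(\mGraph \setminus \{\gamma(t)\}\) lying ahead of the walker. On the interior of each edge, \(f\) decreases linearly at unit speed, and at each vertex \(v\) visited, \(f\) jumps downward by the total length of the side branches at \(v\). Thus \(f\) is nonincreasing along \(\gamma\), starts at \(f(0) = L_1(x_0) \geq l\), and tends to \(0\), so it must reach or cross \(l\) at some finite time \(t_\ast \leq L\).

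At \(t_\ast\), the desired condition is attained. If \(f(t_\ast) = l\) at an interior point of an edge, then \(\mGraph \setminus \{\gamma(t_\ast)\}\) has two components of lengths \(l\) and \(L-l\), and a suitable indexing proves the claim. If \(f\) jumps past \(l\) at a vertex \(v = \gamma(t_\ast)\), let \(\ell_{\mathrm{prev}}\), \(\ell_{\mathrm{next}}\), and \(\ell_{\mathrm{side}, k}\) denote the lengths of the components at \(v\) via the previous edge, the next edge, and each side edge, respectively. The left- and right-hand limits of \(f\) at \(v\) give \(\ell_{\mathrm{prev}} < L - l\) and \(\ell_{\mathrm{next}} < l\), while the ``heaviest'' rule yields \(\ell_{\mathrm{side}, k} \leq \ell_{\mathrm{next}} < l\). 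Designating the back component as \(\mGraph^1(v)\) (of length \(< L - l\)) and the rest as \(\mGraph^i(v)\), \(i \geq 2\) (of length \(< l\)), yields the required configuration.

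The main obstacle is to handle the possibility that \(f\) first crosses the \emph{other} threshold \(L - l\) before reaching \(l\): at such an earlier jump vertex, the side components may exceed \(l\), and the desired condition may temporarily fail since the heaviest-forward rule only controls side branches by \(\ell_{\mathrm{next}}\), which may still lie in \((l, L-l)\). The remedy is to continue the walk past such ``bad'' jump vertices; since \(f \to 0\) along \(\gamma\), it will eventually reach or cross \(l\), and the argument above then applies at that later point. The finiteness of \(L\) ensures that this occurs within finitely much arclength.
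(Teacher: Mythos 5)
Your argument is correct, but it takes a genuinely different route from the paper. The paper reduces everything to the compact case: it fixes a compact exhaustion $\mGraph_n$ with $L(\mGraph\setminus\mGraph_n)<\varepsilon$, replaces each of the finitely many pendant tails by a single edge of the same length, applies Friedlander's Lemma~4 for compact trees as a black box to the resulting compact tree, and then observes that the point so obtained cannot lie in the interior of a substitute edge (nor be a leaf), hence lies in $\mGraph_n$ and has the required properties in $\mGraph$ itself. You instead reprove the statement directly on the infinite tree via the greedy walk and the level-crossing analysis of the forward length $f$; this is self-contained (it re-derives the compact case along the way) and avoids the $\varepsilon$-approximation, at the cost of redoing the combinatorial bookkeeping that the paper outsources to \cite{Fri05}. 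Two points in your write-up deserve a word of care. First, that $f(t)\to 0$ along the walk does not follow merely from the arclength being at most $L$: if the walk runs into an end, you should note that the forward components form a nested decreasing family of subtrees with empty intersection, so their measures tend to $0$ by continuity of the (finite) measure --- this is where $L(\mGraph)<\infty$ is really used. Second, your ``main obstacle'' paragraph about first crossing $L-l$ is a non-issue: nothing is required of the configuration at intermediate vertices, and at the $l$-crossing the back component automatically has length $L-f(t_\ast^-)\le L-l$ because $f\ge l$ up to that time. With those remarks, the case analysis at the crossing (interior point versus jump at a vertex, with the heaviest-branch rule giving $\ell_{\mathrm{side},k}\le\ell_{\mathrm{next}}\le l$) is exactly what is needed, including when $f$ hits $l$ exactly.
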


\begin{proof} We take $ \varepsilon >0 $ with $ \min( |\mGraph| -l, l ) < |\mGraph| - \varepsilon $. We take a compact exhaustion $ (\mGraph_n)_{n\in\N} $ of $ \mGraph $ and fix $ n $ large enough that $ |\mGraph \backslash \mGraph_n| < \varepsilon $. Now consider $ \mGraph_n $. This graph is compact, and $\mGraph \setminus \mGraph_n$ consists of at most finitely many (possibly infinite) graphs $\widehat\mGraph_{k}$, $k=1,\ldots, m$, attached as pendants to $\mGraph_n$ at finitely many vertices (cf.\ the discussion after Proposition~\ref{PropBetti}). The sum of the volumes of these $ \widehat\mGraph_k $ does not exceed $ \varepsilon $. 
We replace each of these $ m $ graphs $ \widehat{\mGraph}_k $ by an edge $ \me_k $ of length $|\widehat{\mGraph}_k| $. The resulting graph, call it $ \widehat{\mGraph}_n $, satisfies the conditions of \cite[Lemma~4]{Fri05}. Thus, we can choose the point $ x $ from this lemma. This point $ x $ cannot be an element of the interior of an edge $ \me_k $, since otherwise one of the resulting graphs would have volume exceeding $ |\mGraph| - \varepsilon $. Since $ x $ cannot be a vertex of degree one, we conclude that $ x \in \mGraph_n $. Per construction $ x $ also has the desired properties in the graph $ \mGraph $.
\end{proof}

The following generalises \cite[Lemma~2]{Fri05}; the proof in the compact case may be repeated verbatim and is thus omitted.

\begin{lemma}\label{FriedL2}
Let $ \mGraph $ be a locally finite metric tree with finite volume $|\mGraph| >0 $ and let $ n \in \mathbb{N} $ with $ n \geq 2 $. Then there exist $ n-1 $ points $ x_1, \ldots x_{n-1} \in \mGraph $ such that, for any element of the set $ \mGraph(x_1, \ldots, x_{n-1}) $, we have
\[ \left|\mGraph^j(x_1, \ldots x_{n-1})\right| \leq \frac{|\mGraph|}{n} \qquad \hbox{for all }1 \leq j \leq k(x_1, \ldots, x_{n-1}).\]
\end{lemma}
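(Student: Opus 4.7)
The plan is to proceed by induction on $n \geq 2$, where the base case $n=2$ is an immediate consequence of Lemma~\ref{fried4unendlich}: applying that lemma with $l = L(\mGraph)/2$ yields a single point $x_1 \in \mGraph$ such that, after relabelling, $L(\mGraph^1(x_1)) \leq L(\mGraph)/2$ and $L(\mGraph^i(x_1)) \leq L(\mGraph)/2$ for all $i \geq 2$, so that \emph{every} component of $\mGraph \setminus \{x_1\}$ has length at most $L(\mGraph)/2$, as required.

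For the inductive step from $n$ to $n+1$, I would apply Lemma~\ref{fried4unendlich} again, this time with $l = L(\mGraph)/(n+1) \in (0,L(\mGraph))$, to obtain a point $x_1 \in \mGraph$ such that (after relabelling) $L(\mGraph^1(x_1)) \leq L(\mGraph) - L(\mGraph)/(n+1) = nL(\mGraph)/(n+1)$ while $L(\mGraph^i(x_1)) \leq L(\mGraph)/(n+1)$ for all $2 \leq i \leq p_{x_1}$. The subgraph $\mGraph^1(x_1)$ is itself a locally finite metric tree of finite total length, so the inductive hypothesis, applied with parameter $n$ to $\mGraph^1(x_1)$ in place of $\mGraph$, provides points $x_2,\ldots,x_n \in \mGraph^1(x_1)$ such that every component of $\mGraph^1(x_1) \setminus \{x_2,\ldots,x_n\}$ has length at most
\[
\frac{L(\mGraph^1(x_1))}{n} \leq \frac{1}{n}\cdot\frac{nL(\mGraph)}{n+1} = \frac{L(\mGraph)}{n+1}.
\]
(In the degenerate case $L(\mGraph^1(x_1))=0$ the bound is trivial and the auxiliary points may be chosen arbitrarily in $\mGraph^1(x_1)$.)

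Since $x_2,\ldots,x_n$ all lie in $\mGraph^1(x_1)$, the removal of $\{x_1,\ldots,x_n\}$ from $\mGraph$ leaves the subgraphs $\mGraph^i(x_1)$ for $i \geq 2$ intact, while decomposing $\mGraph^1(x_1)$ as just described. Therefore every element of $\mGraph(x_1,\ldots,x_n)$ is either one of the $\mGraph^i(x_1)$ with $i\geq 2$, or a component of $\mGraph^1(x_1) \setminus \{x_2,\ldots,x_n\}$; in both cases its length is at most $L(\mGraph)/(n+1)$, completing the induction.

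I do not expect a serious obstacle here, because the only real difference compared with the compact case from \cite{Fri05} is the need for a balanced single-point split in the presence of graph ends, and this has already been arranged by Lemma~\ref{fried4unendlich}. The only small point to verify is that the inductive step can genuinely be applied to $\mGraph^1(x_1)$, which is immediate: taking the closure of a connected component of $\mGraph \setminus \{x_1\}$ preserves local finiteness, the tree property, and finiteness of the total length.
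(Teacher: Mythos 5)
Your proof is correct and follows essentially the same route as the paper: the paper omits the argument entirely, stating that Friedlander's compact-case proof of \cite[Lemma~2]{Fri05} carries over verbatim once Lemma~\ref{fried4unendlich} is available, and that proof is precisely the induction on $n$ you have written out (split off a piece of length at most $L-L/(n+1)$ using Lemma~\ref{fried4unendlich} with $l=L/(n+1)$, then recurse on the large component). Your explicit handling of the inductive bookkeeping and the observation that $\mGraph^1(x_1)$ inherits the hypotheses are exactly the points one needs to check.
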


We can finally give the proof of Theorem~\ref{FriedlanderTh1}, which closely follows that of~\cite[Theorem~1]{Fri05}.

\begin{proof}[Proof of Theorem~\ref{FriedlanderTh1}]
Let $\psi_j$ be any eigenfunction associated with $\mu_j$, $j \in \N$, and fix $k \geq 2$. Then for any $x_1,\ldots,x_{k-1} \in \mGraph$ there exists a nontrivial linear combination $\psi$ of the $k$ eigenfunctions $\psi_1,\ldots,\psi_k$ such that $\psi(x_i) = 0$ for all $i=1,\ldots,k$ (note that the existence of such a $\psi$ only requires the linear independence of the $\psi_j$; it does not require any properties of their nodal counts). We choose the points $x_i$ according to Lemma~\ref{fried4unendlich}, that is, in such a way that (keeping the notation of the previous lemmata) for every subgraph $ \mGraph^j(x_1, \ldots ,x_{k})  $, with $1 \leq j \leq k(x_1, \ldots, x_{k}) $  belonging to the set $ \mGraph(x_1, \ldots x_k) $,
\[ \left|\mGraph^j(x_1, \ldots , x_{k-1})\right| \leq \frac{|\mGraph|}{k} \qquad 1 \leq j \leq k(x_1, \ldots, x_{k}).\]
Now since $\psi$ is nontrivial there exists at least one subgraph, call it $\mGraph^1$, on which $\psi$ does not vanish identically. An argument similar to the one used to obtain \eqref{eq:nodaldomainproperty} shows that
\begin{equation}\label{friedsatz1}
	\int_{\mGraph^1 } |\psi'(x)|^2\, \ud x \leq \mu_k(\mGraph) \int_{\mGraph^1 } |\varphi(x)|^2\, \ud x.
\end{equation}
Since $\psi$ satisfies a Dirichlet condition at at least one point of $\mGraph^1$ (namely whichever of the $x_1,\ldots,x_n$ belong(s) to the boundary of $\partial\mGraph^1$), Lemma~\ref{FriedlanderTh1} implies
\begin{equation}\label{friedsatz2}
	\int_{\mGraph^1 } | \psi'(x) |^2\, \ud x \geq \frac{\pi^2}{4 |\mGraph^1|}^2 \int_{\mGraph^1} |\varphi(x)|^2\,\ud x.
\end{equation}
The fact that $|\mGraph^1| \leq |\mGraph|/k$ leads to \eqref{FriedlanderTh1Gl1}.

We still have to show the case of equality. The only difference to the proof for the finite case is that instead of segments with a Dirichlet endpoint $ y $ we may also have infinite path graphs of finite length, with a Dirichlet endpoint $ y $. Note that at the graph end of any such path the same Neumann condition holds as for a leaf of a finite path graph. Applying in particular Lemma~\ref{FriedL2}, the rest of the proof carries over verbatim from \cite[Section~2]{Fri05}, so we omit it.
\end{proof}

\subsection{Further lower bounds}
	\label{subsec:further_lower_bounds}

\subsubsection{Diameter}

Apart from the volume, an important quantity in spectral geometry is the diameter. It was proved in~\cite[Section~5]{KenKurMal16} that neither lower nor upper bounds on the spectral gap are generally possible in terms of diameter alone. 
However, estimates can be obtained if the diameter is complemented by volume, or if the graph has a special topological structure. 
In what follows we extend such estimates to infinite graphs.

We start with a bound in terms of volume and diameter. 
In the case of compact metric graphs, the following theorem is found in \cite[Theorem 4.4.6]{Plu22}; see also \cite[Theorem~1.1]{Ken20} and  \cite[Theorem~7.2]{KenKurMal16} for earlier iterations.

\begin{prop}\label{thm:kkmmLowerDiameter}
Let $ \mGraph $ be a locally finite, connected metric graph with finite volume $|\mGraph| >0 $ and diameter $\diam(\mGraph)> 0$, and finite Betti number. Then
\begin{equation}\label{eq:kkmmLowerDiameter}
	\mu_2 (\mGraph) \ge \frac{2}{|\mGraph|\diam(\mGraph)}.
\end{equation}
\end{prop}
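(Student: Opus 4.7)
The plan is to deduce~\eqref{eq:kkmmLowerDiameter} from its already-known compact counterpart (\cite[Theorem~4.4.6]{Plu22}) via a compact-exhaustion argument. The three quantities appearing on both sides of~\eqref{eq:kkmmLowerDiameter} -- the eigenvalue, the total length, and the diameter -- all behave compatibly under a suitable exhaustion, \emph{provided} the Betti number is finite: this is precisely what allows the inequality to survive passage to the limit.

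First I would dispose of the trivial case $D(\mGraph) = \infty$, in which $\tfrac{2}{LD} = 0$ and the bound is automatic. Assume henceforth that $D(\mGraph)<\infty$. Since $\beta(\mGraph)<\infty$, Proposition~\ref{PropDiameterConvergence} furnishes a compact exhaustion $(\mGraph_n)_{n\in\N}$ (of the form in Example~\ref{ExampleApprox}) satisfying $D(\mGraph_n)\to D(\mGraph)$. For this same sequence, monotone convergence of the Lebesgue measure gives $L(\mGraph_n)\to L(\mGraph)$, since every edge of $\mGraph$ is eventually contained in some $\mGraph_n$. Hence $L(\mGraph_n)\,D(\mGraph_n) \to L\,D$ as $n\to\infty$.

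Applying the compact case to each $\mGraph_n$ yields
\[
\mu_2(\mGraph_n) \;\ge\; \frac{2}{L(\mGraph_n)\,D(\mGraph_n)}
\]
for all $n$. Combining this with Lemma~\ref{TheoremEigenvalueApproximation}(2), which states that $\mu_2(\mGraph) \ge \limsup_{n\to\infty}\mu_2(\mGraph_n)$, we conclude
\[
\mu_2(\mGraph) \;\ge\; \limsup_{n\to\infty}\mu_2(\mGraph_n) \;\ge\; \lim_{n\to\infty}\frac{2}{L(\mGraph_n)\,D(\mGraph_n)} \;=\; \frac{2}{L\,D},
\]
which is precisely~\eqref{eq:kkmmLowerDiameter}. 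One could alternatively invoke Lemma~\ref{TheoremEigenvalueApproximation}(3) to upgrade the limsup to an actual limit, but this is not needed here. There is no genuine obstacle in the argument: the entire substance sits in the finite case, and the finite-Betti-number hypothesis is used only to guarantee that a \emph{single} exhaustion simultaneously realises the correct limits for length, diameter, and spectrum.
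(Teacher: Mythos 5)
Your proposal is correct and follows essentially the same route as the paper: apply the compact-case bound of \cite[Theorem 4.4.6]{Plu22} along a compact exhaustion, use Proposition~\ref{PropDiameterConvergence} (where the finite Betti number enters) to ensure $D(\mGraph_n)\to D(\mGraph)$, and pass to the limit via Lemma~\ref{TheoremEigenvalueApproximation}(2). The only superfluous step is the case $D=\infty$, which cannot occur since $D\le L<\infty$ by hypothesis.
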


\begin{rem}
	\label{rem:ladder}
Proposition~\ref{thm:kkmmLowerDiameter} holds on any $\Graph$, not necessarily of finite Betti number, on which one can find a compact exhaustion $(\mGraph_n)_n$ such that $\diam (\mGraph_n)$ converges to $(\diam \mGraph)$. One such example would be an infinite or semi-infinite ladder with suitably chosen edge lengths (cf.\ Example~\ref{exa:marvin-ladder}.(2)).
\end{rem}

\begin{proof}
Choose a compact exhaustion of $\mGraph$, and apply \cite[Theorem 4.4.6]{Plu22}. Proposition~\ref{PropDiameterConvergence} and Lemma~\ref{TheoremEigenvalueApproximation}(2) then yield \eqref{eq:kkmmLowerDiameter} on $\Graph$.
\end{proof}

One can, similarly, generalise the result of \cite[Theorem~1.2]{Ken20}, which gives a lower bound on $\mu_k$ in terms of $|\mGraph|$ and $\diam(\mGraph)$, to locally finite graphs with finite Betti number.
This estimate can be greatly improved if $\Graph$ is a (finite or infinite) tree. In this case, we obtain:

\begin{prop}\label{prop:estimate-tree-diameter}
Let $\Graph$ be a locally finite metric tree and let
	\[
	\Dir=\mathfrak{C}(\Graph)\cup\{\mv\in\mV : \deg(\mv)=1\}
	.
	\]
If $\Graph$ has finite diameter $\diam(\mGraph)>0$, then the lowest eigenvalue $\lambda_1(\Graph, \Dir)$ of $\mathcal H_\Dir$ satisfies
	\begin{equation}\label{eq:estimate-tree-diameter}
	\lambda_1(\Graph, \Dir)\ge \frac{\pi^2}{\diam(\mGraph)^2}.
	\end{equation}
\end{prop}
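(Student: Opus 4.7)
The plan is to combine the Hersch--Makai-type lower bound for quantum graphs (Theorem~\ref{prop:Makai}), which yields $\lambda_1(\mGraph,\Dir) \ge \pi^2/(4\rho^2)$ in terms of the inradius $\rho := \sup_{x \in \mGraph}\dist(x,\Dir)$, with a purely metric-geometric fact specific to trees, namely that $\rho \le D/2$. Chaining these inequalities gives $\lambda_1 \ge \pi^2/(4\rho^2) \ge \pi^2/(4(D/2)^2) = \pi^2/D^2$, as claimed.

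The substantive step is to prove the inradius bound $\rho \le D/2$. I would fix an arbitrary $x \in \mGraph \setminus \Dir$ (the inequality $\dist(x,\Dir) \le D/2$ is vacuous on $\Dir$) and exploit the fact that removing $x$ disconnects the tree into $k \ge 2$ components $C_1,\ldots,C_k$, each of which is itself a non-trivial subtree and must therefore contain at least one leaf or end of $\mGraph$, hence at least one element of $\Dir$. Setting $s_i := \inf\{\dist(x,y) : y \in \Dir \cap C_i\}$, the unique-paths property of a tree forces $\dist(y_i,y_j) = \dist(x,y_i) + \dist(x,y_j)$ for any $y_i \in \Dir \cap C_i$ and $y_j \in \Dir \cap C_j$ with $i \ne j$. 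Passing to infima and invoking $\dist(y_i,y_j) \le D$ yields $s_i + s_j \le D$ for every $i \ne j$, from which one concludes $\min_i s_i \le D/2$: otherwise, summing any two of the $s_i$ would exceed $D$. Thus $\dist(x,\Dir) \le D/2$, and taking the supremum over $x$ yields $\rho \le D/2$.

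The hard part will be handling two technical points. First, when the infimum defining some $s_i$ is attained only in the limit at an end $\gamma \in \mathfrak{C}(\mGraph)$, the distances and the unique-paths identity must be interpreted inside the completion $\overline{\mGraph}$ and the argument above reformulated via rays converging to $\gamma$ -- a routine limiting exercise but one worth spelling out carefully. Second, Theorem~\ref{prop:Makai} must be verified to apply in this setting (a locally finite tree of finite diameter but possibly infinite total length); should it carry additional hypotheses, I would instead pass to a compact exhaustion $(\mGraph_n)$ of $\mGraph$ (cf.\ Example~\ref{ExampleApprox}), note that the inradius bound is inherited by each compact subtree -- so that the compact-graph version of Hersch--Makai applies on each $\mGraph_n$ -- and conclude via the min-max principle, together with Lemma~\ref{TheoremEigenvalueApproximation}.
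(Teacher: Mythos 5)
Your reduction inverts the paper's logic, and its first link is false for general trees. The inequality $\lambda_1(\mGraph,\Dir)\ge \pi^2/\bigl(4\Inr(\mGraph,\Dir)^2\bigr)$ that you want to import from Theorem~\ref{prop:Makai} is only proved there under the essential hypothesis that $\mGraph$ has a \emph{centre vertex}, i.e.\ a vertex equidistant from all points of $\Dir$; the paper's Example~\ref{exa:letter_T} (the compact $3$-star with edge lengths $\ell_1,\ell_1,\ell_2$, $\ell_2<\ell_1$) shows explicitly that $\lambda_1(\mGraph_T,\Dir)<\pi^2/\bigl(4\Inr(\Graph_T)^2\bigr)$ once that hypothesis is dropped, so the Makai-type bound with constant $\pi^2/4$ simply does not hold in the generality you need. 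Your fallback via a compact exhaustion does not repair this: the compact-graph version of Hersch--Makai (Steps 1--2 of the alternative proof of Theorem~\ref{prop:Makai}, via Lemma~\ref{lem:cutting_Dirichlet}) carries the same centre-vertex hypothesis, and the counterexample is already compact. Nor would a weaker Makai inequality help: by the remark after Example~\ref{exa:letter_T} any surviving constant $C$ must satisfy $C<\pi^2/4$, and chaining $\lambda_1\ge C/\Inr^2$ with your (correct) metric bound $\Inr\le D/2$ only yields $\lambda_1\ge 4C/D^2<\pi^2/D^2$. There is also a circularity concern: the paper deduces Theorem~\ref{prop:Makai} \emph{from} Proposition~\ref{prop:estimate-tree-diameter} (for a centred tree with centre of degree $>1$ one has $D=2\Inr$, and otherwise one mirrors the tree), so your argument runs the implication in the wrong direction; only the surgery-based alternative proof of Theorem~\ref{prop:Makai} is independent, and it still requires the centre vertex.

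The salvageable part of your proposal is the purely metric statement $\sup_{x\in\mGraph}\dist(x,\Dir)\le D/2$: removing $x\notin\Dir$ from a tree leaves at least two components, each containing a leaf or an end of $\mGraph$ and hence a point of $\Dir$ in $\overline{\mGraph}$, and the unique-path identity gives $s_i+s_j\le D$ for $i\ne j$, exactly as you argue. But this only shows that the (false) inradius bound \emph{would} imply the proposition. The paper's actual proof proceeds differently: it notes that the argument of \cite[Lemma~4.6]{BerKenKur17} for compact trees with Dirichlet conditions at all leaves --- a direct analysis of the positive ground state, not an inradius estimate --- carries over verbatim to the locally finite setting (if one prefers, combined with the exhaustion machinery of Lemma~\ref{TheoremEigenvalueApproximation}). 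To fix your proof you should work directly with the ground state in the spirit of that lemma rather than routing through Theorem~\ref{prop:Makai}.
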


Note that due to Proposition~\ref{prop:length_epsilon_compact}, the assumptions of Proposition~\ref{prop:estimate-tree-diameter} imply that the spectrum of $\mathcal H_\Dir$ is purely discrete.
The proof of Proposition~\ref{prop:estimate-tree-diameter} follows precisely along the lines of \cite[Lemma~4.6]{BerKenKur17} and is therefore omitted.

\begin{rem}
Other lower bounds on $\lambda_1 (\Graph)$ are obtained in \cite[Corollary~3.6 and Theorem~4.1.(i)]{Sol04} in terms of both diameter and further related quantities (the \emph{height} and \emph{reduced height} of the tree), but under the additional assumption that the tree is \emph{radially symmetric} and a Dirichlet condition is imposed on its root.

The discussion following~\cite[Proposition~2.4]{Car00} shows that the assumption that $\Graph$ is a tree cannot generally be dropped.
\end{rem}

\subsubsection{Inradius}
	\label{subsubsec:Inradius}

Another natural geometric quantity is the \emph{inradius} 
\[
\Inr(\mGraph,\Dir):=\sup\{\dista_{\mGraph}(x,\Dir):x\in \mGraph \},
\]
that is, the supremum of radii of closed balls within $\overline \Graph$ that do not intersect the Dirichlet set $\Dir \subset \mathfrak{C}(\Graph)\cup \mV$.
In $\R^2$, the \emph{Makai inequality}~\cite{Mak65} (see also \cite{Hersch60}) states that there exists an absolute constant $C > 0$ such that for all bounded and simply connected domains $\Omega \subset \R^2$ the lowest eigenvalue $\lambda_1(\Omega)$ of the Dirichlet Laplacian on $\Omega$ satisfies
\[
\lambda_1(\Omega) \geq \frac{C}{\Inr(\Omega)^2},
\]
where $\Inr(\Omega)$ is the inradius of $\Omega$, 
\[
\Inr(\Omega) := \sup \{ r>0: \text{ there exists } x \in \Omega \text{ such that } B_r(x)\subset \Omega\}.
\]

It is fairly easy to see that a metric graph analogue of the Makai inequality can only hold on trees (as the Makai inequality itself holds only on simply connected domains) with a Dirichlet condition imposed on all ends and on all degree one vertices.
Let us next prove a Makai-type inequality for a special class of such trees, namely trees with a \emph{centre point} (or \emph{centre vertex}). 

\begin{defi}
	\label{def:centre_vertex}
	Suppose $\mGraph$ is a locally finite metric tree of finite inradius. Let $\centervertex$ be a vertex of $\mGraph$ and let
	\begin{equation}\label{eq:Dir-set-center-tree}
	\Dir:=\mathfrak{C}(\Graph)\cup\{\mv\in\mV : \deg(\mv)=1,~\mv\neq\centervertex\}
	.
	\end{equation}
	We say that $\centervertex$ is a \emph{centre vertex} of $\mGraph$ if $\dist (\centervertex, \gamma_1)=\dist(\centervertex,\gamma_2)$ holds for any $\gamma_1,\gamma_2\in \Dir$. 
	Whenever (at least) a centre vertex exists, we call $\mGraph$ a \emph{centred tree}.
\end{defi}

We refer to \autoref{fig:no_compact_embedding} for an example of a metric tree with \textit{two} centre vertices.
In particular, in the trees we consider, there is at most one vertex of degree one (namely $\centervertex$, if it is of degree one) where standard vertex conditions are imposed, whereas on all all other vertices, we impose Dirichlet conditions. Note that, if $\centervertex$ is a centre vertex of $\mGraph$, then
\begin{equation}
\label{eq:centre-inradius}
	\Inr(\mGraph,\Dir)=\dist(\centervertex,\gamma)\qquad\hbox{for all  }\gamma\in \Dir. 
\end{equation}
This identity motivates the use of centre vertices in this context.

\begin{theo}
\label{prop:Makai}
Suppose $\Graph$ is a locally finite metric tree of finite inradius with a centre vertex $\centervertex$ and let $\Dir$ be as in \eqref{eq:Dir-set-center-tree}. Then the lowest eigenvalue $\lambda_1(\Graph, \Dir)$ of $\mathcal H_\Dir$ admits the lower bound
	\begin{equation}
	\label{eq:Makai}
	\lambda_1(\Graph, \Dir)
	\geq 
	\frac{\pi^2}{4 \Inr(\Graph,\Dir)^2}.
	\end{equation}
	Equality holds if and only if $\mGraph$ is an equilateral star graph with Dirichlet conditions at all degree one vertices.
\end{theo}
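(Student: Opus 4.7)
The plan is to reduce the estimate to Proposition~\ref{prop:estimate-tree-diameter} via a symmetric doubling construction about the centre vertex \(\centervertex\). Set \(R := \Inr(\Graph, \Dir)\), which is finite by assumption; by the remark preceding the theorem, \(\dist(\centervertex, \gamma) = R\) for every \(\gamma \in \Dir\). A first observation I would record is that \(\dist(\centervertex, x) \leq R\) for every \(x \in \Graph\): since \(\Graph\) is a tree, the unique path from \(\centervertex\) through \(x\) always extends to some terminus in \(\Dir\) (an end or a non-\(\centervertex\) degree-one vertex), and this extended path has length exactly \(R\).

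Next, I would construct the \emph{symmetric double} \(\Graph_D\) of \(\Graph\) about \(\centervertex\): take an isometric copy \(\Graph'\) of \(\Graph\) (with copy \(\centervertex'\) of \(\centervertex\) and copy \(\Dir'\) of \(\Dir\)) and glue \(\Graph\) and \(\Graph'\) along the identification \(\centervertex \equiv \centervertex'\). The result is a locally finite, connected metric tree in which \(\centervertex\) has degree \(2\deg_\Graph(\centervertex) \geq 2\). Defining \(\Dir_D := \Dir \sqcup \Dir'\), one checks that \(\Dir_D = \mathfrak{C}(\Graph_D) \cup \{v \in \mV : \deg_{\Graph_D}(v) = 1\}\) -- exactly the Dirichlet set to which Proposition~\ref{prop:estimate-tree-diameter} applies. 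Using the distance bound above, the diameter of \(\Graph_D\) satisfies \(D(\Graph_D) \leq 2R\): two points in the same half are at distance at most \(D(\Graph) \leq 2R\), while two points in opposite halves are joined by a unique path in the tree \(\Graph_D\) passing through \(\centervertex\), of length at most \(R + R\).

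The remaining step will be to show \(\lambda_1(\Graph_D, \Dir_D) \leq \lambda_1(\Graph, \Dir)\). For any \(f \in H^1_0(\Graph; \Dir)\), I would extend \(f\) by reflection to \(f_D \in H^1(\Graph_D)\), i.e., \(f_D := f\) on \(\Graph\) and \(f_D := f \circ \sigma\) on \(\Graph'\), where \(\sigma: \Graph' \to \Graph\) denotes the gluing isometry. Continuity of \(f_D\) at \(\centervertex\) holds because both halves meet with value \(f(\centervertex)\); vanishing on \(\Dir_D\) is inherited; and both the \(L^2\) and \(H^1\) norms exactly double, so the Rayleigh quotient is preserved. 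The variational formula~\eqref{eq:varchar-lambda1-mix} then yields \(\lambda_1(\Graph_D, \Dir_D) \leq \lambda_1(\Graph, \Dir)\). Applying Proposition~\ref{prop:estimate-tree-diameter} to the finite-diameter tree \(\Graph_D\) finally gives
\[
\lambda_1(\Graph, \Dir) \geq \lambda_1(\Graph_D, \Dir_D) \geq \frac{\pi^2}{D(\Graph_D)^2} \geq \frac{\pi^2}{4R^2},
\]
which is precisely~\eqref{eq:Makai}.

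The main subtlety I anticipate is the bookkeeping in identifying \(\Dir_D\) with the Dirichlet set required by Proposition~\ref{prop:estimate-tree-diameter}, especially in the corner case \(\deg_\Graph(\centervertex) = 1\): there \(\centervertex\) is a non-Dirichlet degree-one vertex of \(\Graph\) (excluded from \(\Dir\) by the definition of centre vertex), and one must verify that after gluing it becomes a degree-two, hence non-leaf, vertex of \(\Graph_D\), so that the characterisation of \(\Dir_D\) as ends plus degree-one vertices of \(\Graph_D\) remains valid.
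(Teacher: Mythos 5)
Your proof is correct, and it is essentially a careful write-up of the route the paper only gestures at in the sentence preceding its proof (``Theorem~\ref{prop:Makai} is a direct consequence of Proposition~\ref{prop:estimate-tree-diameter}\dots while for trees with a centre vertex of degree $1$ the inequality follows by mirroring the tree at the centre vertex''); the proof the paper actually writes out is deliberately different. You double the tree at \(\centervertex\) uniformly (which neatly absorbs the \(\deg(\centervertex)=1\) corner case you flag), check that the doubled Dirichlet set is exactly the one Proposition~\ref{prop:estimate-tree-diameter} requires, bound \(D(\Graph_D)\le 2\Inr(\Graph,\Dir)\) via the observation that every maximal path leaving \(\centervertex\) terminates in \(\Dir\) at distance exactly \(\Inr(\Graph,\Dir)\), and transfer the estimate back by reflection of test functions; all of these steps are sound, and the reflection argument has the mild advantage of using only the variational characterisation \eqref{eq:varchar-lambda1-mix}, with no need for an actual ground-state eigenfunction. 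The paper instead proves the theorem by a three-step surgery argument built on Lemma~\ref{lem:cutting_Dirichlet}: for compact trees it takes a positive ground state and iteratively prunes pendant edges along which the eigenfunction is nonincreasing (inducting on \(|\Dir|\), and first cutting through \(\centervertex\) when \(\deg(\centervertex)>1\)), reducing to a single interval, and then passes to infinite trees by compact exhaustion via Lemma~\ref{TheoremEigenvalueApproximation}(1). What the paper's longer route buys is the equality analysis: the strictness statement in Lemma~\ref{lem:cutting_Dirichlet} is what yields the characterisation of equality in \eqref{eq:Makai} as precisely the equilateral star graphs (see the remark following the proof), information your doubling argument does not produce, since equality in Proposition~\ref{prop:estimate-tree-diameter} and in the reflection step would still have to be disentangled. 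Your argument buys brevity and uniformity, at the cost of leaning entirely on Proposition~\ref{prop:estimate-tree-diameter}, whose proof the paper omits.
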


\begin{rem}
\label{rem:two:centres}
We emphasise that the existence of a centre vertex is strictly weaker than radial symmetry as investigated in~\cite{Sol04}. 
This can be seen on the the diagonal comb graphs of Section~\ref{H10yesH1not}, which have two centres.
Example~\ref{exa:letter_T} below shows that the assumption on the existence of a centre vertex in Theorem~\ref{prop:Makai} cannot be dropped.
\end{rem}

The inequality~\eqref{eq:Makai} is a direct consequence of Proposition~\ref{prop:estimate-tree-diameter} since under the assumption of the existence of a centre vertex of degree greater than $1$, the diameter is twice the inradius; while for trees with a centre vertex of degree $1$ the inequality follows by mirroring the tree at the centre vertex. 
However, in order to classify the case of equality, we provide a different proof here using the following surgery principle, which we believe to be interesting in its own right.

\begin{lemma}\label{lem:cutting_Dirichlet}
	Let $\Graph$ be a locally finite, connected metric graph, and {$\Dir \subset \{\mv\in\mV~|~\deg(\mv)=1\} \cap \mathfrak{C} (\Graph)$.}
	Suppose that $\Graph_1$ and $\Graph_2$ are two closed subgraphs of $\Graph$ such that $(\Graph_1,\Graph_2)$ is a partition of $\Graph$, i.e., $\Graph=\Graph_1\cup\Graph_2$ holds, and $\Graph_1\cap\Graph_2$ consists of finitely many vertices of $\Graph$.
	Let $\psi$ denote a nonnegative and nontrivial eigenfunction corresponding to $\lambda_1(\Graph, \Dir)$ and let $\Dir_1:=\Graph_1\cap\Dir$.
	If, for each vertex $\mv\in \Graph_1\cap\Graph_2$ and each edge $\me$ connecting $\mv$ with a vertex $\mw$ in $\mGraph_2$, the derivative of $\psi$ { at $\mv$ pointing into $\me$}, is nonpositive then the inequality
	\begin{equation}\label{eq:surgery-dir}
	\lambda_1(\Graph, \Dir) 
	\geq
	\lambda_1(\Graph_1, \Dir_1)
	\end{equation}
	holds. The inequality is strict if and only if at least one of the above-mentioned derivatives is strictly negative.
\end{lemma}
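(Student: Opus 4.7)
The plan is to test the variational characterisation~\eqref{eq:varchar-lambda1-mix} of \(\lambda_1(\Graph_1,\Dir_1)\) against the restriction \(\psi|_{\Graph_1}\), and use the eigenvalue equation satisfied by \(\psi\) on \(\Graph\) to bound its Rayleigh quotient by \(\lambda:=\lambda_1(\Graph,\Dir)\). First I would check admissibility: \(\psi|_{\Graph_1}\in H^1_0(\Graph_1;\Dir_1)\) because continuity is inherited, the Dirichlet condition on \(\Dir_1\subset\Dir\) survives, and nontriviality follows from the Perron--Frobenius argument recalled in Subsection~\ref{sec:spec}, which gives \(\psi>0\) on \(\Graph\setminus\Dir\).

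The core computation is an edgewise integration by parts. Since \(\psi\) solves \(-\psi''=\lambda\psi\) on every edge, summing over the edges of \(\Graph_1\) and writing \(\partial_\me\psi(\mv)\) for the derivative of \(\psi\) at \(\mv\) pointing into \(\me\) yields
\[
\int_{\Graph_1}|\psi'|^2\,\ud x - \lambda\int_{\Graph_1}|\psi|^2\,\ud x \;=\; -\sum_{\mv}\psi(\mv)\sum_{\me\in\mE_\mv\cap\mE(\Graph_1)}\partial_\me\psi(\mv),
\]
where the outer sum runs over all vertices incident to some edge of \(\Graph_1\). Next I would show that only the cut vertices survive: terms at \(\mv\in\Dir_1\) vanish because \(\psi(\mv)=0\); terms at vertices \(\mv\) interior to \(\Graph_1\) vanish because all incident edges lie in \(\Graph_1\) and the Kirchhoff condition for \(\mathcal H_\Dir\) kills the inner sum. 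At a cut vertex \(\mv\in\Graph_1\cap\Graph_2\), the Kirchhoff condition in \(\Graph\) rewrites the inner sum as \(-\sum_{\me\in\mE^{(2)}_\mv}\partial_\me\psi(\mv)\), where \(\mE^{(2)}_\mv\) collects the edges at \(\mv\) leading into \(\Graph_2\).

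The upshot is
\[
\int_{\Graph_1}|\psi'|^2\,\ud x - \lambda\int_{\Graph_1}|\psi|^2\,\ud x \;=\; \sum_{\mv\in\Graph_1\cap\Graph_2}\;\sum_{\me\in\mE^{(2)}_\mv}\psi(\mv)\,\partial_\me\psi(\mv).
\]
By the sign hypothesis (\(\psi\geq0\) and \(\partial_\me\psi(\mv)\leq 0\) for \(\me\in\mE^{(2)}_\mv\)) the right-hand side is nonpositive, which together with~\eqref{eq:varchar-lambda1-mix} gives~\eqref{eq:surgery-dir}. For the characterisation of strict inequality, cut vertices have degree at least two and therefore lie outside \(\Dir\) under the standing assumption on \(\Dir\), so Perron--Frobenius forces \(\psi(\mv)>0\) at every such \(\mv\); hence the right-hand side vanishes precisely when every \(\partial_\me\psi(\mv)\) with \(\me\in\mE^{(2)}_\mv\) is zero, and is strictly negative---yielding strict inequality in~\eqref{eq:surgery-dir} via the variational principle---precisely when at least one such derivative is strictly negative. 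The only conceptual subtlety in this plan is the bookkeeping of boundary terms at cut vertices; once organised via Kirchhoff, the rest is routine.
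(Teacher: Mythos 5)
Your argument for the inequality \eqref{eq:surgery-dir} itself is correct and coincides with the paper's proof: both rest on the same edgewise integration by parts over $\Graph_1$, the elimination of boundary terms at interior and Dirichlet vertices, the Kirchhoff condition at the cut vertices to convert the sum over edges of $\Graph_1$ into minus the sum over the edges leading into $\Graph_2$, and the sign hypothesis combined with the variational principle \eqref{eq:varchar-lambda1-mix}. The ``if'' direction of the strictness claim (some derivative strictly negative $\Rightarrow$ strict inequality) is also handled correctly, using $\psi(\mv)>0$ at cut vertices.

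There is, however, a gap in the ``only if'' direction, i.e.\ in showing that if \emph{all} the derivatives pointing into $\Graph_2$ vanish then \emph{equality} holds in \eqref{eq:surgery-dir}. Your computation in that case only shows that the Rayleigh quotient of $\psi|_{\Graph_1}$ equals $\lambda_1(\Graph,\Dir)$, which by the variational principle gives $\lambda_1(\Graph_1,\Dir_1)\leq\lambda_1(\Graph,\Dir)$ -- the same inequality as before, not its saturation. A priori some other test function on $\Graph_1$ could have a strictly smaller Rayleigh quotient. The missing step, which the paper supplies, is to observe that when every derivative into $\Graph_2$ vanishes, the restriction $\psi|_{\Graph_1}$ satisfies the Kirchhoff (standard) conditions at the cut vertices \emph{as a function on $\Graph_1$}, and is therefore a genuine eigenfunction of $\mathcal H_{\Dir_1}$ with eigenvalue $\lambda_1(\Graph,\Dir)$; since it is strictly positive away from the Dirichlet set, it must be the ground state of $\mathcal H_{\Dir_1}$, which forces $\lambda_1(\Graph_1,\Dir_1)=\lambda_1(\Graph,\Dir)$. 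Without this positivity-plus-eigenfunction argument the biconditional in the lemma is not established.
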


\autoref{fig:demonstration-of-cutting-lemma} visually describes a possible case of application of Lemma~\ref{lem:cutting_Dirichlet}.

\begin{figure}[ht]
\begin{minipage}{0.49\textwidth}
\centering
\begin{tikzpicture}
\coordinate (a) at (-1,1);
\coordinate (b) at (-1,-1);
\coordinate (c) at (0,0);
\coordinate (d) at (1,1);
\coordinate (e) at (1,-1);
\coordinate (f) at (2,1);
\coordinate (g) at (3,1);
\coordinate (h) at (2,-1);
\draw[thick]  (a) edge (c);
\draw[thick]  (b) edge (c);
\draw[thick]  (d) edge (c);
\draw[thick]  (e) edge (c);
\draw[thick]  (e) edge (d);
\draw[thick]  (f) edge (d);
\draw[thick]  (f) edge (e);
\draw[thick]  (g) edge (f);
\draw[thick]  (h) edge (e);
\draw[fill=white] (a) circle (1.75pt);
\draw[fill] (b) circle (1.75pt);
\draw[fill] (c) circle (1.75pt);
\draw[fill] (d) circle (1.75pt);
\draw[fill] (f) circle (1.75pt);
\draw[fill] (e) circle (1.75pt);
\draw[fill=white] (h) circle (1.75pt);
\draw[fill=white] (g) circle (1.75pt);
\node at (.9,1) [anchor=south] {$\mv_1$};
\node at (.9,-1) [anchor=north] {$\mv_2$};
\node at (1.5,1) [anchor=south] {$\me_1$};
\node at (1.5,-1) [anchor=north] {$\me_3$};
\node at (1.6,0) [anchor=west] {$\me_2$};
\end{tikzpicture}
\end{minipage}
\begin{minipage}{0.49\textwidth}
\centering
\begin{tikzpicture}
\coordinate (a) at (-1,1);
\coordinate (b) at (-1,-1);
\coordinate (c) at (0,0);
\coordinate (d) at (1,1);
\coordinate (d1) at (1.4,1);
\coordinate (e) at (1,-1);
\coordinate (e1) at (1.4,-1);
\coordinate (f) at (2.4,1);
\coordinate (g) at (3.4,1);
\coordinate (h) at (2.4,-1);
\draw[thick]  (a) edge (c);
\draw[thick]  (b) edge (c);
\draw[thick]  (d) edge (c);
\draw[thick]  (e) edge (c);
\draw[thick]  (e) edge (d);
\draw[thick]  (f) edge (d1);
\draw[thick]  (f) edge (e1);
\draw[thick]  (g) edge (f);
\draw[thick]  (h) edge (e1);
\draw[thick,dashed]  (1.2,-0.7) edge (1.2,-1.3);
\draw[thick,dashed]  (1.2,0.7) edge (1.2,1.3);
\draw[fill=white] (a) circle (1.75pt);
\draw[fill] (b) circle (1.75pt);
\draw[fill] (c) circle (1.75pt);
\draw[fill] (d) circle (1.75pt);
\draw[fill] (f) circle (1.75pt);
\draw[fill] (e) circle (1.75pt);
\draw[fill] (d1) circle (1.75pt);
\draw[fill] (e1) circle (1.75pt);
\draw[fill=white] (h) circle (1.75pt);
\draw[fill=white] (g) circle (1.75pt);
\node at (-.6,0) {$\Graph_1$};
\node at (2.6,0) {$\Graph_2$};
\end{tikzpicture}
\end{minipage}
\caption{A partition of a graph $\Graph$ into two subgraphs $\Graph_1$ and $\Graph_2$. To apply Lemma \ref{lem:cutting_Dirichlet} the derivatives of the eigenfunction $\psi$ at the vertices $\mv_1$ and $\mv_2$ pointing into the respective edges $\me_1$, $\me_2$ and $\me_3$ have to be nonpositive.}\label{fig:demonstration-of-cutting-lemma}
\end{figure}

\begin{proof}
	Let $\mE_1$ denote the edge set of $\Graph_1$ and let $\psi_1$ denote the restriction of $\psi$ to $\Graph_1$. For a given vertex $\mv\in \Graph_1\cap\Graph_2$ let $\mE_{2,\mv}$ denote the set of edges in $\mGraph_2$ that are incident to $\mv$, and suppose that for each such edge $\me\simeq [0,\ell_\me]$ the vertex $\mv$ is identified with $0$. Then, our assumption states that $\psi'_\me(0)\leq 0$ for all $\me\in\mE_{2,\mv}$. A direct calculation using integration by parts yields
	\begin{align*}
		\lVert \psi_1' \rVert_{L^2(\Graph_1)}^2
		&=
		\sum_{\me \in \mE_1}
		\int_0^{\ell_\me} \lvert \psi'_\me(x) \rvert^2 \mathrm{d} x
		=
		\sum_{\me \in \mE_1}
		\psi_\me \psi'_\me \mid_0^{\ell_\me}
		-
		\sum_{\me \in \mE_1}
		\int_0^{\ell_\me} \psi_\me(x) \psi''_\me(x) \mathrm{d} x
		\\
		&=
		\sum_{\mv\in\mGraph_1\cap\mGraph_2}\psi (\mv)\sum_{\me\in\mE_{\mv,2}} \psi'_\me(0)
		+
		\lambda_1(\Graph, \Dir) \lVert \psi_1 \rVert_{L^2(\Graph_1)}^2\\
		& \leq \lambda_1(\Graph, \Dir) \lVert \psi \rVert_{L^2(\Graph_1)}^2,
	\end{align*}  
	whence
	\begin{equation}
	\label{eq:surgery_inradius_pre_minmax}
	\frac{\lVert \psi_1' \rVert_{L^2(\Graph_1)}^2}{\lVert \psi_1 \rVert_{L^2(\Graph_1)}^2} \leq \lambda_1(\Graph, \Dir).
	\end{equation}
	In light of the variational principle \eqref{eq:varchar-lambda1-mix}, this implies \eqref{eq:surgery-dir}. 
	In order to classify the case when \eqref{eq:surgery-dir} is a strict inequality, we first note that~\eqref{eq:surgery_inradius_pre_minmax} is strict if and only if $\psi'_{\me}(0) < 0$ for some $\mv\in\Graph_1\cap\Graph_2$ and some $\me\in\mE_{2,\mv}$. 
	Therefore, also \eqref{eq:surgery-dir} is strict in this case.
	Conversely, if $\psi'_{\me}(0) = 0$ for all $\mv\in\Graph_1\cap\Graph_2$ and all $\me\in\mE_{2,\mv}$, then the restricted function $\psi_1 = \psi|_{\Graph_1}$ satisfies Kirchhoff conditions on $\mGraph_1$ in the vertices in $\mGraph_1\cap\mGraph_2$ and is therefore a bona fide eigenfunction of $\mathcal H_{\Dir_1}$, and since $\psi_1$ is strictly positive except at the Dirichlet vertices, it must be associated with the first eigenvalue $\lambda_1(\mGraph,\Dir)$ of $\mathcal{H}_{\Dir_1}$.
This implies equality in \eqref{eq:surgery-dir}.
\end{proof}
Note that we will only use Lemma~\ref{lem:cutting_Dirichlet} in its simplest form, where $\mGraph_1\cap\mGraph_2$ consists of exactly one vertex $\mv$ of $\mGraph$ and where there is exactly one edge $\me$ of $\mGraph$ that connects $\mv$ with a vertex $\mw\in\mGraph_2$. In that case, one only needs to check the sign of one derivative of $\psi$ to apply Lemma~\ref{lem:cutting_Dirichlet}.
\begin{proof}[Proof of Theorem~\ref{prop:Makai}]
We prove~\eqref{eq:Makai} in three steps.

\textit{Step 1:} Suppose first that $\Graph$ is a compact tree and $\deg(\centervertex)=1$.
 If $|\Dir|=1$, then $\Graph$ is isometrically isomorphic to an interval with mixed Dirichlet/Neumann conditions in the degree one vertices and therefore $\lambda_1(\mGraph,\Dir)=\frac{\pi^2}{4 \Inr(\Graph,\Dir)^2}$. Next assume that $|\Dir|\geq 2$. Let $\psi$ denote a nonnegative eigenfunction corresponding to $\lambda_1(\Graph,\Dir)$. Using induction and the Kirchhoff condition, it can be shown that there exists a path $\mathcal P$ in $\mGraph$ connecting the centre point $\centervertex$ and a vertex $\mv\in\Dir$ such that $\psi$ is decreasing along $\mathcal P$. Since $|\Dir|\geq 2$, and hence $\Graph$ is not an interval, $\mathcal P$ passes at least one vertex other than $\mv, \centervertex$. Let $\mw$ denote the unique vertex in $\mV\setminus \Dir$ that is adjacent to $\mv$, let $\mGraph'$ denote the graph obtained after removing the edge $\mv\mw$ from $\mGraph$ and let $\Dir':=\Dir\setminus\{\mv\}$. Since $\psi$ is decreasing on the edge $\mv\mw$, we may apply Lemma \ref{lem:cutting_Dirichlet} to obtain $\lambda_1(\mGraph,\Dir)\geq \lambda_1(\mGraph', \Dir')$; however, by \eqref{eq:centre-inradius}, $\Inr (\mGraph, \Dir) = \Inr (\mGraph', \Dir')$. Repeating this argument inductively, after a finite number of steps we are reduced to $|\Dir| = 1$, which proves the estimate \eqref{eq:Makai} for all compact trees whose centre vertex has degree $1$.

\textit{Step 2:} Suppose now that $\Graph$ is a compact tree and $\centervertex$ has degree $d>1$. Again let $\psi$ denote a nonnegative eigenfunction corresponding to $\lambda_1(\Graph,\Dir)$. Since $\psi$ satisfies Kirchhoff conditions in $\centervertex$, there exists an edge $\me$ incident to $\centervertex$ such that $\psi$ has nonpositive derivative on $\me$ at $\centervertex$. Now, cutting through the centre vertex $d-1$ times yields $d$ disjoint trees; for each of these, $\centervertex$ continues to be the centre vertex (and in particular each has the same inradius). From these trees, let $\Graph_2$ denote the tree containing the edge $\me$ and let $\mGraph_1$ denote its complement in $\Graph$, the (restored) union of the other $d-1$ trees. In $\Graph_1$ the centre vertex $\centervertex$ has degree $d-1$. Setting $\Dir_1:=\Dir\cap\Graph_1$ and applying Lemma \ref{lem:cutting_Dirichlet} we obtain $\lambda_1(\mGraph,\Dir)\geq \lambda_1(\mGraph_1,\Dir_1)$. An induction argument together with Step 1 now yields \eqref{eq:Makai} for all compact trees.

\textit{Step 3:} Finally, we suppose that $\Graph$ is an arbitrary (infinite) tree satisfying the assumptions of Proposition \ref{prop:Makai}. We consider the compact exhaustion $(\mGraph_n)_{n\in\mathbb N}$ of $\mGraph$ with 
	\[\mGraph_n:=\left\{x\in\mGraph : \dista_\mGraph(x,\centervertex)\leq \left(1-\frac{1}{2n}\right)\Inr(\mGraph,\Dir)\right\},\quad n\in\mathbb N.\]
For each $n\in\mathbb N$, the graph $\mGraph_n$ is a (compact) tree graph with centre vertex $\centervertex$ and inradius
	\[\Inr(\mGraph_n,\partial \mGraph_n)=\left(1-\frac{1}{2n}\right)\Inr(\mGraph,\Dir)\]
by \eqref{eq:centre-inradius}. Therefore, using \eqref{eq:Makai} in the compact case, we find
	\[\lambda_1(\mGraph_n,\partial \mGraph_n)\geq \frac{\pi^2}{4\Inr(\mGraph_n,\partial \mGraph_n)^2}\qquad\hbox{for all }n\in \N.\]
Letting $n \to \infty$ and using Lemma~\ref{TheoremEigenvalueApproximation}(1), we obtain \eqref{eq:Makai} in the infinite case.

\emph{Identifying the case of equality}. 
It it straightforward to see that equality in~\eqref{eq:Makai} holds if $\mGraph$ is an equilateral star graph.
Conversely, assume that $\mGraph$ is as in the statement of the theorem but \emph{not} an equilateral star graph.
Then, there must be a path $\mathcal{P}$ within $\mGraph$, leading from the centre vertex $\centervertex$ to $\Dir$ on which $\psi_1$ is monotonously decreasing.
If the subtree $\tilde \mGraph$, rooted at $\centervertex$, which contains $\mathcal{P}$ was isomorphic to a path graph itself, we can remove it and by Lemma~\ref{lem:cutting_Dirichlet} this will not increase $\lambda_1(\mGraph, \Dir)$.
Since $\centervertex$ has finite degree, we can only perform this operation a finite number of times and can assume without loss that $\mGraph$ is a tree where every path leading from $\centervertex$ to $\Dir$ will encounter at least one vertex of degree larger than three.
Now again, there is a path $\mathcal{P}$, leading from $\centervertex$ to $\Dir$ on which $\psi_1$ is monotonously decreasing and strictly monotone away from $\centervertex$.
This path will encounter at least one vertex $\mv$ of degree at least three and the derivative of $\psi_1$ at $\mv$ in the direction of $\mathcal{P}$, will be strictly decreasing.
Consequently, we can cut the subtree, attached at $\mv$, which contains the rest of $\mathcal{P}$ and by Lemma~\ref{lem:cutting_Dirichlet}, obtain a new graph $\mGraph' \subset \mGraph$ with Dirichlet conditions at $\Dir' \subset \Dir$, satisfying the conditions of Theorem~\ref{prop:Makai}, but with $\lambda_1(\mGraph', \Dir') < \lambda_1(\mGraph, \Dir)$.
Since inequality~\eqref{eq:Makai} holds for both $\lambda_1(\mGraph, \Dir)$ and $\lambda_1(\mGraph', \Dir')$, we see that equality cannot hold for $\lambda_1(\mGraph, \Dir)$.
\end{proof}

	We now show that without the existence of a centre point, \eqref{eq:Makai} need not hold. 
\begin{exa}
	\label{exa:letter_T}
	Take $\Graph_T$ to be the 3-star graph depicted in Figure~\ref{fig:letter_T}, consisting of two edges of length $\ell_1$ and one edge of length $\ell_2 < \ell_1$, that meet at a single vertex $\mv$; in this case $\Inr (\Graph_T, \Dir) = \frac{\ell_1+\ell_2}{2}$.
	Take $\Dir$ to be the set of the three degree one vertices. 
	We claim that 
	\[
	\lambda_1(\mGraph_T, \Dir) < \frac{\pi^2}{4 \Inr (\Graph_T, \Dir)^2}.
	\]
	Indeed, call $\Graph_T^+$ the graph obtained by adding another edge of length $\ell_2$ to $\mv$ and call $\Dir^+$ the set of its degree one vertices.
	The graph $\Graph_T^+$ can be understood as two Dirichlet intervals $[0, \ell_1 + \ell_2]$, glued at one point.
	By symmetry, the restriction of any ground state $\psi$ (i.e. eigenfunction for $\lambda_1 (\mGraph_T,\Dir)$) must coincide with the corresponding Dirichlet ground states on each interval.
	On the one hand, this implies 
	\[
	\lambda_1(\mGraph_T^+, \Dir^+) 
	= 
	\frac{\pi^2}{(\ell_1 + \ell_2)^2}
	=
	\frac{\pi^2}{4 \Inr(\Graph, \Dir)^2},	
	\]
	on the other hand, the outgoing derivatives of the (nonnegative) ground state at $\mv$ on the two shorter edges are strictly decreasing.
	Consequently, by Lemma~\ref{lem:cutting_Dirichlet}, removing one of those edges will strictly decrease $\lambda_1$, which is the claim.
\end{exa}

\begin{figure}[ht]
	\begin{tikzpicture}
		\begin{scope}	
			\draw (0,.4) node {$\Graph_T$};
			
			\draw[thick] (-2,0) -- (0,0);
			\draw[thick] (2,0) -- (0,0);
			\draw[thick] (0,-.5) -- (0,0);
		
			\draw[thick, fill = black](0,0) circle (3pt);
			\draw[thick, fill = white](-2,0) circle (3pt);
			\draw[thick, fill = white](2,0) circle (3pt);
			\draw[thick, fill = white](0,-.5) circle (3pt);
		\end{scope}
		
		\begin{scope}[xshift = 5cm]
			
			\draw (0,.4) node {$\Graph_T^+$};
			
			\draw[thick] (-2,0) -- (0,0);
			\draw[thick] (2,0) -- (0,0);
			\draw[thick] (-.2,-.5) -- (0,0);
			\draw[thick] (.2,-.5) -- (0,0);
		
			\draw[thick, fill = black](0,0) circle (3pt);
			\draw[thick, fill = white](-2,0) circle (3pt);
			\draw[thick, fill = white](2,0) circle (3pt);
			\draw[thick, fill = white](-.2,-.5) circle (3pt);
			\draw[thick, fill = white](.2,-.5) circle (3pt);
		\end{scope}

	\end{tikzpicture}

\caption{The metric graphs $\Graph_T$ and $\Graph_T^+$ from Example~\ref{exa:letter_T}}
	\label{fig:letter_T}
\end{figure}

\begin{rem}	
	Note that Example~\ref{exa:letter_T} does not exclude \emph{per se} the validity of a Makai inequality for the Friedrichs realisation on metric trees, but it shows that it cannot hold with constant $\pi^2/4$. 
Let us also emphasise that on two-dimensional domains, the optimal constant in the Makai inequality is unknown, Makai himself having proved it with $C = 1$.
However, Hersch~\cite{Hersch60} proved before Makai that the optimal constant on \emph{convex} two-dimensional domains is $\pi^2/4$; we may possibly regard Theorem~\ref{prop:Makai}  as an analogue of Hersch's result.

\end{rem}

\section{Upper bounds on the eigenvalues}\label{sec:other}

We finish with two upper bounds, which complement some of the lower bounds of Section~\ref{sec:symmetrisation}.

\begin{prop}\label{thm:upper-diam}
Let $ \mGraph $ be a locally finite, connected metric graph with finite volume $|\mGraph| >0 $ and diameter $\diam(\mGraph)>0$ and finite Betti number $\beta \in \N_0$.
Then $ \mGraph $ satisfies
\begin{equation}\label{eq:kkmmUpperDiameter}
	\mu_2 (\mGraph) \leq \frac{\pi^2}{\diam(\mGraph)^2}\frac{4|\mGraph| - 3\diam(\mGraph)}{\diam(\mGraph)}.
\end{equation}	
\end{prop}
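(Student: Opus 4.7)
The plan is to reduce \eqref{eq:kkmmUpperDiameter} to the corresponding inequality for compact metric graphs, available in \cite[Section~7]{KenKurMal16}, by passing to the limit along a suitably chosen compact exhaustion. I may assume $D(\mGraph)<\infty$, the claim being vacuous otherwise.

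Since $\beta(\mGraph)<\infty$, Proposition~\ref{PropDiameterConvergence} provides a compact exhaustion $(\mGraph_n)_{n\in\mathbb N}$ with $D(\mGraph_n)\to D(\mGraph)$ (and, as noted just before its proof, $D(\mGraph_n)\le D(\mGraph)$ for all sufficiently large $n$). For each such $n$, $\mGraph_n$ is compact and connected with $D(\mGraph_n),\,L(\mGraph_n)>0$, so the compact analogue of \eqref{eq:kkmmUpperDiameter} yields
\[
\mu_2(\mGraph_n)\le \frac{\pi^2}{D(\mGraph_n)^2}\cdot\frac{4L(\mGraph_n)-3D(\mGraph_n)}{D(\mGraph_n)}.
\]

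To conclude I would pass to the limit $n\to\infty$. By monotone convergence $L(\mGraph_n)\to L(\mGraph)<\infty$; by construction $D(\mGraph_n)\to D(\mGraph)>0$; and combining parts~(2) and~(3) of Lemma~\ref{TheoremEigenvalueApproximation}, which is legitimate thanks to $\beta<\infty$, one obtains $\mu_2(\mGraph_n)\to\mu_2(\mGraph)$. Since the function $(L,D)\mapsto \pi^2(4L-3D)/D^3$ is continuous at $(L(\mGraph),D(\mGraph))$, the inequality passes to the limit and yields \eqref{eq:kkmmUpperDiameter}.

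The only genuinely delicate ingredient here is the convergence $D(\mGraph_n)\to D(\mGraph)$: for an arbitrary compact exhaustion Proposition~\ref{PropDiameterBelow} only guarantees the a priori weaker bound $D(\mGraph)\le \liminf_{n\to\infty} D(\mGraph_n)$, with possibly strict inequality, which alone would not suffice. The role of the finite-Betti-number assumption is precisely to rule this out, via Remark~\ref{RemarkAlmostTree}: once the exhaustion contains the compact core carrying all cycles of $\mGraph$, the complement is a disjoint union of trees, so intrinsic distances within $\mGraph_n$ coincide with those in $\mGraph$. This is exactly the mechanism driving Proposition~\ref{PropDiameterConvergence}, and it legitimises the passage to the limit.
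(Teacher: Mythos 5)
Your proof is correct and follows essentially the same route as the paper: apply the compact-case inequality of \cite[Theorem~7.1]{KenKurMal16} along a compact exhaustion and pass to the limit via Lemma~\ref{TheoremEigenvalueApproximation}. The only (harmless) deviation is your closing remark: the paper invokes Proposition~\ref{PropDiameterBelow} rather than Proposition~\ref{PropDiameterConvergence}, and the a priori weaker bound $D(\mGraph)\le\liminf_{n}D(\mGraph_n)$ in fact \emph{does} suffice, since for $0<D\le L$ the map $D\mapsto \pi^2(4L-3D)/D^3$ is strictly decreasing (its derivative is $6\pi^2(D-2L)/D^4<0$), so together with $L(\mGraph_n)\le L$ and $\mu_2(\mGraph)\le\mu_2(\mGraph_n)$ from Lemma~\ref{TheoremEigenvalueApproximation}(3) --- the sole place where $\beta<\infty$ is really needed --- one may still pass to the limit in \eqref{eq:kkmmUpperDiameter}.
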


Analogously to Remark~\ref{rem:ladder}, the conclusion of the theorem holds whenever one can find a compact exhaustion with diameter converging to $\diam(\mGraph)$.

\begin{proof}
The proof follows directly from combining \cite[Theorem~7.1]{KenKurMal16} (which gives \eqref{eq:kkmmUpperDiameter} on any compact graph) applied to any compact exhaustion of $\mGraph$, together with Proposition~\ref{PropDiameterBelow} and Lemma~\ref{TheoremEigenvalueApproximation}(3).
\end{proof}

We finish with the estimate mentioned above which is also new for compact graphs.

\begin{theo}\label{TheoremBettiD}
Let $\mGraph$ be a locally finite, connected metric graph with finite diameter $\diam(\mGraph)>0$, finite Betti number $\beta \in \N_0$, and for which the Neumann realisation $\mathcal{H}_{\mathrm{N}}$ has compact resolvent.
Then for all $k\geq 2$ we have
\begin{equation}\label{eq:BettiD}
	\mu_k (\mGraph) \leq  (k+\beta-1)^2\frac{\pi^2}{\diam(\mGraph)^2}.
\end{equation}
\end{theo}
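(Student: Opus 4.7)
The plan is to combine three ingredients: an eigenvalue estimate on compact trees obtained via closest-point projection onto a diameter-realizing path, a reduction to trees by cutting $\beta$ times (Lemma~\ref{baumlemma}), and a compact exhaustion to handle the infinite case. The key intermediate result is the compact-tree bound: for every compact tree $T$ with diameter $D(T)>0$ and every integer $n\geq 1$,
\begin{equation}\label{eq:tree-bound-BKKM-plan}
	\mu_n(T)\leq\left(\frac{(n-1)\pi}{D(T)}\right)^2.
\end{equation}
Granting~\eqref{eq:tree-bound-BKKM-plan}, the compact case of the theorem follows quickly: cut $\mGraph$ at $\beta$ carefully chosen vertices (one per independent cycle, keeping the graph connected) to obtain a spanning tree $T$ sharing the same edges as $\mGraph$. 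Since cutting cannot decrease metric distances, $D(T)\geq D(\mGraph)=D$. Applying Lemma~\ref{baumlemma} with $j=\beta$ together with~\eqref{eq:tree-bound-BKKM-plan} at $n=k+\beta$ yields
\[
	\mu_k(\mGraph)\leq\mu_{k+\beta}(T)\leq\left(\frac{(k+\beta-1)\pi}{D(T)}\right)^2\leq\left(\frac{(k+\beta-1)\pi}{D}\right)^2,
\]
which is~\eqref{eq:BettiD}.

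To establish~\eqref{eq:tree-bound-BKKM-plan}, I would fix a diameter-realizing path $P\subset T$, identified isometrically with $[0,D(T)]$, and let $\pi\colon T\to P$ denote the closest-point projection, which is well-defined and 1-Lipschitz since $P$ is a geodesically convex subset of the tree $T$. Lift the first $n$ Neumann modes $\phi_j(t):=\cos(j\pi t/D(T))$, $j=0,\dots,n-1$, to the test functions $\tilde\phi_j:=\phi_j\circ\pi\in H^1(T)$; each $\tilde\phi_j$ is continuous, piecewise $C^1$, and constant on every branch of $T$ hanging off $P$. For an arbitrary $u=\sum_{j=0}^{n-1}c_j\tilde\phi_j$ and $g:=\sum_j c_j\phi_j$, the derivative of $u$ vanishes off $P$, whence $\int_T|u'|^2\,\ud x=\int_P|g'|^2\,\ud t$, while the nonnegative contributions from the branches give $\int_T|u|^2\,\ud x\geq\int_P|g|^2\,\ud t$. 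Using orthogonality of the $\phi_j$ on $P$ together with $\|\phi_j'\|_{L^2(P)}^2=(j\pi/D(T))^2\|\phi_j\|_{L^2(P)}^2$, a direct computation bounds the Rayleigh quotient of $u$ uniformly by $((n-1)\pi/D(T))^2$, and the min--max principle yields~\eqref{eq:tree-bound-BKKM-plan}.

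The infinite case is handled by a compact exhaustion $(\mGraph_m)_{m\in\N}$ of $\mGraph$ with $D(\mGraph_m)\to D$ (Proposition~\ref{PropDiameterConvergence}, applicable since $D<\infty$ and $\beta<\infty$) and $\beta(\mGraph_m)=\beta$ for all large $m$ (Proposition~\ref{PropBetti}(1)). The compact-case bound applied to $\mGraph_m$, combined with Lemma~\ref{TheoremEigenvalueApproximation}(3)---which provides $\mu_k(\mGraph)\leq\mu_k(\mGraph_m)$ for all large $m$---yields~\eqref{eq:BettiD} upon letting $m\to\infty$. The main technical obstacle lies precisely in this last step when $\mGraph$ has infinite total length, which the hypotheses permit (as illustrated by the diagonal combs of Subsection~\ref{H10yesH1not} for $\alpha\in(1/2,1]$): Lemma~\ref{TheoremEigenvalueApproximation}(3) is stated under the assumption $L(\mGraph)<\infty$, and its extension-by-constant argument must be revisited when the constant function no longer lies in $L^2(\mGraph)$. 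Nevertheless the underlying gluing/cutting comparison from~\cite{BerKenKur19} carries over: the compact-resolvent hypothesis together with finite $\beta$ ensures that every subgraph appearing in the argument retains purely discrete spectrum (finite-rank modifications preserve vanishing of the essential spectrum), which is enough for the required spectral monotonicity.
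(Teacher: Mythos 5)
Your route is essentially the paper's: compact exhaustion with $D(\mGraph_m)\to D$ and stabilised Betti number, the cutting estimate $\mu_k(\mGraph_m)\le\mu_{k+\beta}(\mathcal T)$ from Lemma~\ref{baumlemma}, a diameter bound for Neumann eigenvalues of compact trees, and Lemma~\ref{TheoremEigenvalueApproximation}(3) to pass to the limit. The one genuine difference is that the paper simply cites Rohleder \cite[Theorem~3.4]{Roh17} for the tree bound, whereas you prove it from scratch via the nearest-point projection onto a diameter-realising path and the lifted cosine modes. That argument is sound: the projection is well defined and locally constant off $P$ because $T$ is a tree, the lifted functions span an $n$-dimensional subspace of $H^1(T)$ (their restrictions to $P$ are the linearly independent $\phi_j$), the branches contribute nothing to the energy and a nonnegative amount to the $L^2$-norm, and the orthogonality of both the $\phi_j$ and the $\phi_j'$ on $[0,D(T)]$ gives the uniform Rayleigh-quotient bound. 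This makes your write-up self-contained where the paper is not, at no cost.

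Your final caveat about Lemma~\ref{TheoremEigenvalueApproximation}(3) is well taken -- the theorem's hypotheses indeed allow $L(\mGraph)=\infty$ (e.g.\ the diagonal combs with $\alpha\in(1/2,1]$), while that lemma is stated and proved only for finite total length; the paper's own proof silently invokes it anyway, so you have spotted a real issue rather than created one. However, your proposed repair does not close it. The step that breaks down is not about discreteness of spectrum: the proof of Lemma~\ref{TheoremEigenvalueApproximation}(3) hinges on the interlacing
\[
\mu_{k+1}\bigl(\mathcal T_1\ \dot\cup\ (\mGraph\setminus\mathcal T_1)\bigr)\le\mu_k(\mGraph\setminus\mathcal T_1),
\]
which uses $\mu_1(\mathcal T_1)=0$, i.e.\ that the constant function lies in $L^2(\mathcal T_1)$. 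If $\mGraph$ has infinite volume then at least one pendant tree $\mathcal T_i$ of $\mGraph\setminus\mGraph_m$ has infinite volume, constants are not admissible, and (precisely because the resolvent is compact and $0$ is then not an eigenvalue) one has $\mu_1(\mathcal T_i)>0$, with no a priori comparison to $\mu_k(\mGraph\setminus\mathcal T_i)$. So ``every subgraph retains purely discrete spectrum'' is true but beside the point. To actually finish in the infinite-volume case one would need a different limiting argument -- for instance a quantitative control of $\mu_1(\mathcal T_i)$ as the exhaustion grows, or a direct test-function construction on $\mGraph$ itself that avoids the exhaustion altogether (note that your lifted cosines are bounded and hence fail to be in $L^2$ on an infinite-volume graph, so the direct route also requires modification, e.g.\ suitable cut-offs). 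As it stands, both your argument and the paper's are complete only under the additional assumption $L(\mGraph)<\infty$.
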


In the case of a compact tree graph (i.e., for which $\beta = 0$) we recover a theorem of Rohleder \cite[Theorem~3.4]{Roh17}; for general compact graphs it improves the upper bound on $\mu_2$ in~\cite[Remark~6.3]{KenKurMal16}, $\mu_2 (\mGraph) \leq \frac{4\pi^2 |\mathcal E|^2}{\diam(\mGraph)^2}$, and provides a corresponding bound on the higher eigenvalues $\mu_k$, $k\geq 3$, for the first time.

\begin{proof}
Fix $k\geq 2$ and let $(\mGraph_n)_{n\in\N}$ be a compact exhaustion of $\mGraph$ for which $\diam(\mGraph_n) \to \diam(\mGraph)$ (see Proposition~\ref{PropDiameterConvergence}); we suppose $n$ to be large enough that all cycles of $\mGraph$ are contained in $\mGraph_n$, so that $\beta(\mGraph_n) = \beta$ (see Proposition~\ref{PropBetti}). We will prove \eqref{eq:BettiD} for $\mGraph_n$; the statement for $\mGraph$ then follows from Lemma~\ref{TheoremEigenvalueApproximation}(3).

Since $\mGraph_n$ has Betti number $\beta$, it is possible to cut it $\beta$ times to produce a tree $\mathcal{T}$, which obviously satisfies $\diam(\mathcal{T}) \geq \diam(\mGraph_n)$. 
By Lemma \ref{baumlemma}, we have $\mu_k (\mGraph_n) \leq \mu_{k+\beta} (\mathcal{T})$. 
Now \cite[Theorem~3.4]{Roh17} implies that
\begin{displaymath}
	\mu_k (\mGraph_n) \leq \mu_{k+\beta} (\mathcal{T}) \leq \frac{\pi^2 (k+\beta-1)^2}{\diam(\mathcal{T})^2} \leq \frac{\pi^2 (k+\beta-1)^2}{\diam(\mGraph_n)^2},
\end{displaymath}
which completes the proof.
\end{proof}

Observe that, for $\beta=0$, \eqref{eq:BettiD} is sharp for all $k$ (simply take $\Graph$ to be an interval), but it is not clear what happens for higher $\beta$. 
Simple examples such as a loop graph suggest that it might be rougher.

Another geometric quantity used in upper bounds on eigenvalues on compact metric graphs is the \emph{girth}, see \cite{BerKenKur23}. Let us precisely define it in the case of infinite metric graphs.

\begin{defi}
  \label{def:dirichlet-girth}
  The \emph{girth}  of a connected metric graph $\Graph$ shall be given by
  \begin{displaymath}
\Girth(\Graph):=   \inf \{|\mathfrak{c}| \colon \mathfrak{c} \subset \Graph \text{ is
      a cycle in $\Graph'$} \},
  \end{displaymath}
  where $\Graph'$ is the metric graph obtained from $\Graph$ by
  identifying (or ``gluing together'') all Dirichlet vertices of
  $\Graph$, if any are present; and all ends, if any are present.  The girth is defined to be zero if   $\Graph'$ is a tree.
\end{defi}

If finite length of $\mGraph$ is assumed, then for girth to be a nontrivial quantity there can only be finitely many cycles (and ends) in $\mGraph$, since otherwise the infimum will necessarily be zero. In this case, we can immediately reproduce the bound \cite[Proposition 1.8]{BerKenKur23}.

\begin{prop}
Let $ \mGraph $ be a locally finite, connected metric graph with finite volume $|\mGraph| >0 $. Then
\[
	\mu_2(\mGraph)
	<
	\frac{48 L}{\Girth(\mGraph)^3}.
\]
\end{prop}

The proof requires adapting to infinite graphs the construction of a suitable test function as done for finite graphs in the proof of \cite[Proposition 1.8]{BerKenKur23}: we omit the easy details.

Other bounds, such as \cite[Theorem 1.2]{BerKenKur23}, stating that in the presence of at least one Dirichlet vertex, 
\[
\lambda_1(\mGraph, \mathfrak{V}) \leq \frac{\pi^2}{\Girth(\mGraph)^2}
\]
while also plausible on infinite metric graphs, will require more technical effort to prove on infinite graphs since it relies on a surgery principle and $\delta$-couplings, which we do not discuss in this article.

\appendix

\section{Relation of the Freudenthal compactification to the metric completion}

\label{app:completions}

Throughout this section let \(\mGraph\) be a locally finite metric graph. Our aim is to discuss the relation between the completion \(\overline{\mGraph}\) of \(\mGraph\) as a metric space and its Freudenthal compactification \(\mGraph\cup \mathfrak{C}(\Graph)\), where \(\mathfrak{C}(\Graph)\) denotes the set of topological ends of \(\mGraph\) (see Definition~\ref{Topologisches Ende}). The following proposition shows that each point in the \emph{metric boundary} \(\overline{\mGraph}\setminus\mGraph\) can be associated with a unique end in \(\mathfrak{C}(\Graph)\).
\begin{prop}\label{prop:identifiying-metric-with-top-ends}
	For each \(x\in\overline{\mGraph}\setminus\mGraph\) and each sequence \((x_n)_{n\in\mathbb N}\) in \(\mGraph\) converging to \(x\) there exist a topological end \(\gamma\in \mathfrak{C}(\Graph)\) and a sequence \((U_n)_{n\in\mathbb N}\) representing \(\gamma\), such that \(x_n\in U_n\) for all \(n\in\mathbb N\).
\end{prop}
\begin{proof}
	Fix some \(x\in\overline{\mGraph}\setminus\mGraph\) and a Cauchy sequence \((x_n)_{n\in\mathbb N}\) in \(\mGraph\) converging to \(x\). We prove the claimed existence of \(\gamma\) and \((U_n)_{n\in\mathbb N}\). Consider a compact exhaustion \((\Graph_k)_{k\in\mathbb N}\) of \(\Graph\). Since \(\Graph_k\) is compact in \(\Graph\), we have \(\dista_\Graph(x,\Graph_k)>0\) for all \(k\in\mathbb N\). Therefore, for each \(k\) there exists some \(n_k\in\mathbb N\) with
\begin{equation}\label{eq:cauchy-property-distance}
	\dista_\Graph(x_n,x_m)<\frac{\dista_\Graph(x,\Graph_k)}{2}
\end{equation}
for all \(n,m\geq n_k\), and without loss of generality, we may assume \(n_k\leq n_{k+1}\) for all \(k\). It is then immediate that
\begin{equation}\label{eq:distance-xn-graphk}
	\dista_\Graph(x_n,\Graph_k)\geq \frac{\dista_\Graph(x,\Graph_k)}{2},
\end{equation}
and, thus, \(x_n\notin \Graph_k\) hold for all \(k\in\mathbb N\) and all \(n\geq n_k\). We now claim that, for fixed \(k\), all \(x_n\) with \(n\geq n_k\) are in the same connected component of \(\Graph\setminus \Graph_k\). Indeed, consider arbitrary \(m,n\geq n_k\) and an arbitrary, rectifiable path \(c:[0,1]\rightarrow\Graph\) connecting \(c(0)=x_n\) and \(c(1)=x_m\). If \(x_n\) and \(x_m\) were in two different components of \(\Graph\setminus \Graph_k\), then we could find some \(t\in (0,1)\) with \(y:=c(t)\in\Graph_k\). Using \eqref{eq:distance-xn-graphk}, and denoting by $L(c)$ the length of the (image of the) path $c$ in $\Graph$, we would then obtain
\begin{align*}
	L(c) 
    = L(c_{|[0,t]}) + L(c_{|[t,1]})
	& 
    \geq \dista_\Graph(x_n,y)+ \dista_\Graph(y,x_m) \\
	& 
    \geq \dista_\Graph(x_n,\Graph_k)+ \dista_\Graph(\Graph_k,x_m) 
    \geq \dista_\mGraph(x,\Graph_k).
\end{align*}
Since \(c\) was arbitrary, this would imply 	\(\dista_\mGraph(x_n,\Graph_k)\geq \dista_\mGraph(x,\Graph_k)\) in contradiction to \eqref{eq:cauchy-property-distance}.

Now, for \(k\in\mathbb N\), let \(V_k\) denote the connected component of \(\Graph\setminus \Graph_k\) with \(x_n\in V_k\) for all \(n\geq n_k\). Then, \(\Graph_k\subset \Graph_{k+1}\) yields \(V_k\supset V_{k+1}\) for all \(k\). Moreover, from \(\bigcup_{k\in\mathbb N} \mathrm{int}(\Graph_k)=\Graph\), we obtain \(\bigcap_{k\in\mathbb N} \overline{V_k} =\emptyset\). Furthermore, using that \(\Graph\) is locally connected, it can be shown that \(\partial V_k\subset \Graph_k\) and, thus, \(\partial V_k\) is compact. Therefore, \((V_k)_{k\in\mathbb N}\) is a representative of a topological end \(\gamma\). We proceed by constructing a representative \((U_n)_{n\in\mathbb N}\) of \(\gamma\) with \(x_n\in U_n\) for all \(n\in\mathbb N\). For \(n\in\mathbb N\), we define
	\[U_n:=\begin{cases}
		\Graph, & \text{if }n<n_1,\\
		V_k, & \text{if }n_k\leq n <n_{k+1};
	\end{cases}\]
it is easily checked that \((U_n)_{n\in\mathbb N}\) is indeed a representative of \(\gamma\) with \(x_n\in U_n\) for all \(n\in\mathbb N\). This completes the proof of Proposition \ref{prop:identifiying-metric-with-top-ends}.
\end{proof}
	The following lemma shows that the topological end \(\gamma\) in Proposition \ref{prop:identifiying-metric-with-top-ends} does not depend on the specific choice of Cauchy sequence converging to \(x\).
\begin{lemma}
	Suppose \((x_n)_{n\in\mathbb N}\) and \((x_n')_{n\in\mathbb N}\) are Cauchy sequences in \(\mGraph\) converging to the same \(x\in\overline{\mGraph}\). Suppose also that \((U_n)_{n\in\mathbb N}\) and \((U_n')_{n\in\mathbb N}\) are representatives of respective graph ends \(\gamma\) and \(\gamma'\) such that \(x_n\in U_n\) and \(x_n'\in U_n\) for all \(n\in\mathbb N\). Then \(\gamma = \gamma'\).
\end{lemma}
\begin{proof}
	We first prove \(\overline{U_n}\cap \overline{U_n'}\neq\emptyset\) for all \(n\in\mathbb N\). Assume for a contradiction that \(\overline{U_{n_0}}\cap \overline{U_{n_0}'}\) is empty for some \(n_0\in\mathbb N\). Now, for \(n\geq n_0\),  consider an arbitrary path \(c:[0,1]\rightarrow \mGraph\) connecting \(x_n=c(0)\) and \(x_n'=c(1)\). Because \(\overline{U_{n_0}}\) and \(\overline{U_{n_0}'}\) are disjoint, there exist \(0\leq t<t'\leq 1\) with \(c(t)\in\partial U_{n_0}\) and \(c(t')\in\partial U_{n_0}'\). We obtain
	\begin{align*}
		L(c) & = L(c_{|[0,t]}) + L(c_{|[t,t']}) + L(c_{|[t',1]})\\
		& \geq \dista_\mGraph(c(t),c(t'))\\
		& \geq \dista_\mGraph(\partial{U_{n_0}},\partial{U_{n_0}'}).
	\end{align*}
	Since \(c\) and \(n\geq n_0\) are arbitrary, we obtain	
	\begin{equation}\label{eq:dist-between-cauchy-seq}
	\dista_\mGraph(x_n,x_n')\geq \dista_\mGraph(\partial{U_{n_0}},\partial{U_{n_0}'})
	\end{equation}
	for all \(n\geq n_0\). Since \(\partial U_{n_0}\) and \(\partial U_{n_0}'\) are compact and disjoint, we have \(\dista_\mGraph(\partial{U_{n_0}},\partial{U_{n_0}'})>0\). Thus, \eqref{eq:dist-between-cauchy-seq} is a contradiction to \((x_n)_{n\in\mathbb N}\) and \((x_n')_{n\in\mathbb N}\) having the same limit in \(\overline{\mGraph}\). In conclusion, \(U_n\cap U_n'\) must be non-empty for all \(n\in\mathbb N\).
	
	Next, we prove \(\gamma=\gamma'\). Let \(n\in\mathbb N\) be fixed. Since \(\partial U_n\) is compact and the open sets \((\mathcal G \setminus \overline{U_j'})_{j\in\mathbb N}\) exhaust \(\mathcal G\), there exists some \(j\geq n\) with \(\partial U_n\subset \mathcal G\setminus U_j'\). This implies that
	\begin{equation}\label{eq:for-connect-argument}
		U_j'\subset U_n\cup \mathcal G\setminus \overline{U_n}
	\end{equation}
holds. Note that \(U_j'\cap U_n\) is a superset of the non-empty set \(U_j'\cap U_j\) and is thus non-empty as well. Therefore, since both sets on the right-hand side of \eqref{eq:for-connect-argument} are open and \(U_j'\) is connected, we obtain \(U_j'\cap (\Graph\setminus \overline{U_n})=\emptyset\) and, thus, \(U_j'\subset U_n\). Analogously, one shows the existence of \(k\in\mathbb N\) with \(U_k\subset U_j'\). Therefore, \((U_n)_{n\in\mathbb N}\) and \((U'_n)_{n\in\mathbb N}\) are equivalent with respect to the relation from Definition \ref{Topologisches Ende}, which in turn yields \(\gamma = \gamma'\).
\end{proof}
\begin{rem}
	Proposition \ref{prop:identifiying-metric-with-top-ends} can be extended to all connected and locally connected length spaces \(X\) that have a compact exhaustion \((K_n)_{n\in\mathbb N}\) with \(K_n\subset \mathrm{int}(K_n)\) for all \(n\).
\end{rem}
\begin{defi}
	We define the map \(\boundarymap:\overline{\Graph}\rightarrow \Graph\cup\mathfrak{C}(\Graph)\) as follows:
	\begin{enumerate}
	\item for \(x\in\Graph\), we set \(\boundarymap(x):=x\),
	\item for \(x\in \overline{\Graph}\setminus\Graph\), let \(\boundarymap(x)\in \mathfrak{C}(\Graph)\) be the unique topological end from Proposition \ref{prop:identifiying-metric-with-top-ends}.
	\end{enumerate}
\end{defi}
The following lemma provides different characterisations of \(\boundarymap\):
\begin{lemma}\label{lem:char-boundary-map}
	Let \(x\in \overline{\Graph}\setminus\Graph\) and \(\gamma\in\mathfrak{C}(\mGraph)\). Moreover, let \((U_n)_{n\in\mathbb N}\) be a representative of \(\gamma\). Then, the following statements are equivalent:
	\begin{enumerate}[(i)]
	\item \(\boundarymap(x)=\gamma\),
	\item for each Cauchy sequence \((x_k)_{k\in\mathbb N}\) in \(\Graph\) converging to \(x\) and each \(n\in\mathbb N\), there exists some \(k_n\in\mathbb N\) with \(x_k\in U_n\) for all \(k\geq k_n\),
	\item for each \(n\in\mathbb N\), there exists some \(\varepsilon>0\), such that \(\{y\in\Graph~|~\dista_\mGraph(x,y)<\varepsilon\}\subset U_n\).
	\end{enumerate}
\end{lemma}
\begin{proof}
	We first prove the implication (i) $\implies$ (ii). By definition of \(\boundarymap\), \(\boundarymap(x)=\gamma\) yields that there is a representative \((V_k)_{k\in\mathbb N}\) of \(\gamma\) with \(x_k\in V_k\) for all \(k\in\mathbb N\). By equivalence, for \(n\in\mathbb N\), there exists \(k_n\in\mathbb N\) with \(V_{k_n}\subset U_n\). For \(k\geq k_n\) we obtain \(x_k\in V_k\subset V_{k_n}\subset U_n\). This yields (ii).
	
	For the implication (ii) $\implies$ (iii), suppose (iii) does not hold. Then, for some \(n\in\mathbb N\), we find a sequence \((x_k)_{k\in\mathbb N}\) with \(x_k\ni U_n\) and \(\dista_\mGraph(x,x_k)<\frac{1}{k}\) for all \(k\in\mathbb N\). Therefore, \((x_k)_{k\in \mathbb N}\) is a Cauchy sequence converging to \(x\) that does not satisfy (ii). By contraposition (ii) implies (iii).
	
	It remains to prove that (iii) implies (i). By (iii), for \(n\in\mathbb N\), there exists some \(\varepsilon_n>0\) with
		\[\{y\in\Graph~|~\dista_\mGraph(x,y)<\varepsilon_n\}\subset U_n.\]
	We pick some \(x_n\in\mGraph\) with \(\dista_\mGraph(x,y) < \min(\varepsilon_n,\frac{1}{n})\). Then, by construction, \((x_n)_{n\in\mathbb N}\) is a Cauchy sequence in \(\mGraph\) converging to \(x\) with \(x_n\in U_n\) for all \(n\in\mathbb N\). By definition of \(\boundarymap\), we obtain \(\boundarymap(x)=\gamma\).
\end{proof}
\begin{rem}
	We note that Lemma \ref{lem:char-boundary-map}(iii) implies that \(\boundarymap\) is in fact a continuous map when \(\overline{\Graph}\) is equipped with the topology of the completion of \(\Graph\) as a metric space and \(\Graph\cup\mathfrak{C}(\Graph)\) is equipped with the topology of the Freudenthal compactification of \(\Graph\).
\end{rem}
It is well known that, for locally finite graphs, the topological ends of a metric graph may be uniquely identified with its so-called graph ends (cf.~\cite{DieKue03}), which yields that its topological ends are purely dependent on the combinatorial structure of the graph. This is no longer true for its metric ends, as will be demonstrated by the following example:
\begin{exa}\label{exa:marvin-ladder}
We consider the infinite ladder graph \(\Graph\) as depicted in Figure \ref{fig:ladder-graph}. The ladder graph has exactly one topological end \(\gamma\). However, hereinafter, we consider four different choices for the lengths of the graph edges \(\me_n, \mf_n^1\) and \(\mf_n^2\) for \(n\in\mathbb N\) (see Figure \ref{fig:ladder-graph} for the notation), each of which leads to a different structure of the metric boundary of \(\Graph\).

\textit{(1)} If all edges of \(\Graph\) have the same length \(1\), \(\Graph\) is complete and, therefore, its metric boundary is empty. In particular, \(\boundarymap\) is not surjective.

\textit{(2)} If the edge lengths of \(\Graph\) are given by  \(\ell_{\me_n}=\ell_{\mf_n^1}=\ell_{\mf_n^2}=\frac{1}{2^n}\) for all \(n\in\mathbb N\), then \(\Graph\) has finite length and, in particular, \(\gamma\) has finite length. Furthermore, it can be shown the metric boundary of \(\Graph\) consists of exactly one point.

\textit{(3)} Suppose next that the edge lengths of \(\Graph\) are given by \(\ell_{\mf_n^1}=\ell_{\mf_n^2}=\frac{1}{2^n}\) and \(\ell_{\me_n}=1\) for all \(n\in\mathbb N\). Then, \(\gamma\) has infinite length, and \(\overline{\Graph}\setminus\Graph\) consists of two points.  In particular, \(\boundarymap\) is not injective.

\textit{(4)} Finally, we consider the case where the edge lengths of \(\Graph\) are \(\ell_{\mf_n^1}=\frac{1}{2^n}\) and \(\ell_{\me_n}=\ell_{\mf_n^2}=1\) for \(n\in\mathbb N\).  Then, \(\gamma\) is a graph end of infinite length and \(\overline{\Graph}\setminus\Graph\) consists of precisely one point.

We finish this example by comparing the cases (2) and (4), where \(\boundarymap\) is bijective. We remark first that, in (2), it can be shown \(\boundarymap\) is in fact a homeomorphism  and, thus, the topologies of \(\overline \Graph\) and \(\Graph\cup \mathfrak C(\Graph)\) coincide. However, in (4), \(\boundarymap\) is not a homeomorphism. Indeed, the set
	\[U:= \{x\}\cup\bigcup_{n\in\mathbb N} (\mf_n^1\cup\me_1)\]
is a neighbourhood of the unique point \(x\in \overline \Graph\setminus\Graph\), whereas its image
	\[\boundarymap(U)= \{\gamma\}\cup\bigcup_{n\in\mathbb N} (\mf_n^1\cup\me_1)\]
is not a neighbourhood of \(\gamma =\boundarymap(x)\).
\end{exa}
The rest of this section will be dedicated to study the phenomena observed in Example \ref{exa:marvin-ladder} in full generality. We begin with the following:
\begin{prop}\label{prop:unique-ends-finite}
	Suppose that \(\gamma \in \mathfrak{C}_0(\mGraph)\) is a topological end of \(\Graph\) with finite volume. Then there exists a unique \(x\in\overline \Graph\setminus\Graph\) with \(\boundarymap(x)=\gamma\).
\end{prop}
\begin{proof}
	We first prove the existence of \(x\). Let \((U_n)_{n\in\mathbb N}\) be a representative of \(\gamma\). For each \(n\in\mathbb N\), we pick some \(x_n\in U_n\); then, for \(n\geq m\), we have \(x_m\in U_n\). Since each \(U_n\) is connected, we obtain
		\[\dista_\mGraph(x_n,x_m)\leq |U_n|\]
	for all \(n\geq m\). Because \(\gamma\) has finite length, we have \(\lim_{n\rightarrow \infty} |U_n|=0\). With the previous inequality, we obtain that \((x_n)_{n\in\mathbb N}\) is a Cauchy sequence. Because
		\[\bigcap_{n\in\mathbb N} \overline{U_n} =\emptyset,\]
	the limit \(x\) of \((x_n)_{n\in\mathbb N}\) must be in \(\overline{\mGraph}\setminus\mGraph\). Moreover, by definition of \(\boundarymap\) we have \(\boundarymap(x)=\gamma\).
	
	It remains to show the uniqueness of \(x\). Let \(x,y\in \overline{\mGraph}\setminus\mGraph\) with \(\boundarymap(x)=\boundarymap(y)=\gamma\). Let \((U_n)_{n\in\mathbb N}\) be a representative of \(\gamma\) and let \((x_n)_{n\in\mathbb N}\) and \((y_n)_{n\in\mathbb N}\) be two Cauchy sequences in \(\mGraph\) converging to \(x\) and \(y\) respectively. Let \(n\in\mathbb N\). By Lemma \ref{lem:char-boundary-map}(ii), there exists some \(k_n\in\mathbb N\) with \(x_{k_n},y_{k_n}\in U_n\). We obtain
		\[\dista_\mGraph(x_{k_n},y_{k_n})\leq |U_n|.\]
	Then, \(\lim_{n\rightarrow \infty} |U_n|=0\) yields \(\dista_\mGraph(x,y)=0\) and, thus, \(x=y\).	
\end{proof}
Motivated by (3) and (4) in Example \ref{exa:marvin-ladder} we introduce the following classification of graph ends:
\begin{defi}
	A graph end of infinite volume \(\gamma\) of \(\mGraph\) with \(\boundarymap^{-1}(\gamma)\neq \emptyset\) is called a \emph{degenerate graph end}.
\end{defi}
For the infinite ladder graph, we have already seen in Example \ref{exa:marvin-ladder} that the topologies of the metric completion and the Freudenthal compactification do not coincide if there the graph has degenerate graph ends. However, if a general metric graph does not have any degenerate graph ends, the two topologies do in fact coincide, as shown by the following
\begin{prop}\label{prop:boundary-map-open}
	If \(\Graph\) has no degenerate graph ends, \(\boundarymap\) is open.
\end{prop}
\begin{proof}
Consider an open set \(W\subset \overline{\mGraph}\). We have to show that \(\boundarymap(W)\) is open with respect to the topology of the Freudenthal compactification. It suffices to show that each \(\gamma\in \boundarymap(W)\cap \mathfrak{C}(\mGraph)\) is an interior point of \(\boundarymap(U)\). By definition of Freudenthal compactification we need to show that, for a representative \((U_n)_{n\in\mathbb N}\) of \(\gamma\), there exists some \(m\in\mathbb N\) with \(U_m\subset \boundarymap(W)\).

Let \(x\in W\) with \(\boundarymap(x)=\gamma\). Let \(\varepsilon>0\) with
	\[U_\varepsilon(x):= \{y\in\Graph~|~\dista_\mGraph(x,y)<\varepsilon\} \subset W.\]
	We claim that there exists some \(n\in\mathbb N\) with \(U_n\subset U_\varepsilon(x)\). Assume for a contradiction that for each \(n\in\mathbb N\), there exists some \(y_n\in U_n\) with \(\dista_\mGraph(x,y_n)\geq \varepsilon\). As in the proof of Proposition \ref{prop:unique-ends-finite}, it follows that \((y_n)_{n\in\mathbb N}\) converges to some \(y\in \overline{\mGraph}\setminus \mGraph\) with \(\boundarymap(y)=\gamma\). Moreover, by construction of \((y_n)_{n\in\mathbb N}\) we have \(\dista_\mGraph(x,y)\geq \varepsilon\) and, thus, \(x\neq y\). However, by assumption, \(\gamma\) must be a finite graph end. Therefore, \(x\neq y\) contradicts Proposition \ref{prop:unique-ends-finite}.
	
	We conclude that there exists some \(n\in\mathbb N\) with \(U_n\subset U_\varepsilon(x)\subset \boundarymap(W)\). This completes the proof.
\end{proof}
As an immediate consequence of Proposition \ref{prop:unique-ends-finite}, Proposition \ref{prop:boundary-map-open} and the fact that the Freudenthal compactification is open, we obtain the following
\begin{cor}\label{cor:all_graphs_end_finite_volume}
	If all graph ends of \(\Graph\) have finite volume, \(\boundarymap\) is a homeomorphism. In particular, \(\overline{\mGraph}\) is compact.
\end{cor}
\bibliographystyle{plain}
\bibliography{literatur}
\end{document}